\documentclass[reqno]{amsart}

\usepackage{amsfonts,amssymb,latexsym,amsmath,amsxtra,tikz}
\usepackage{verbatim}

\newcommand{\field}[1]{\mathbb{#1}}

\newcommand{\N}{\field{N}}

\newcommand{\ov}{\overline}

\numberwithin{equation}{section}
\newtheorem{theorem}{Theorem}[section]
\newtheorem{lemma}[theorem]{Lemma}
\newtheorem{corollary}[theorem]{Corollary}

\newtheorem{proposition}[theorem]{Proposition}
\theoremstyle{remark}

\renewenvironment{proof}[1][Proof]{\begin{trivlist}
\item[\hskip \labelsep {\bfseries #1:}]}{\qed\end{trivlist}}

\title[Generalisations of Schur's theorem]{Unification, refinements and companions of generalisations of Schur's theorem}
\author{Jehanne Dousse}
\address{Institut f\"ur Mathematik, Universit\"at Z\"urich\\ Winterthurerstrasse 190, 8057 Z\"urich, Switzerland}
\email{jehanne.dousse@math.uzh.ch}
%\date{\today}
\allowdisplaybreaks

\dedicatory{To Krishna Alladi in honour of his 60th birthday}

\begin{document}

\subjclass[2010] {11P81, 11P84, 05A17}
\keywords{integer partitions, partition identities, weighted words, $q$-series, $q$-difference equations}

\begin{abstract}
We prove a general theorem on overpartitions with difference conditions that unifies generalisations of Schur's theorem due to Alladi-Gordon, Andrews, Corteel-Lovejoy and the author. This theorem also allows one to give companions and refinements of the generalisations of Andrews' theorems to overpartitions. The proof relies on the method of weighted words of Alladi and Gordon and $q$-difference equation techniques introduced recently by the author.
\end{abstract}

\maketitle

%\tableofcontents
%\newpage
%
%

\section{Introduction}
A partition of $n$ is a non-increasing sequence of natural numbers whose sum is $n$.
In $1926$, Schur \cite{Schur} proved the following partition identity.
\begin{theorem}[Schur]
\label{schur}
For any integer $n$, let $A(n)$ denote the number of partitions of $n$ into distinct parts congruent to $1$ or $2$ modulo $3$, and $B(n)$ the number of partitions of $n$ such that parts differ by at least $3$ and no two consecutive multiples of $3$ appear. Then for all $n$,
$$A(n)=B(n).$$
\end{theorem}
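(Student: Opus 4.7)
The plan is to prove Schur's theorem by matching generating functions. The $A$-side is transparent, since partitions counted by $A(n)$ are those into distinct parts from $\{k\geq 1:k\not\equiv 0\pmod 3\}$, giving
\[
\sum_{n\geq 0} A(n)\,q^n \;=\; \prod_{k\geq 1}(1+q^{3k-2})(1+q^{3k-1}) \;=\; (-q;q^3)_\infty\,(-q^2;q^3)_\infty.
\]
It therefore suffices to show that the same product computes $\sum_n B(n)q^n$.

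For the $B$-side, I would set up a $q$-difference equation. Let $f_m(q)$ be the generating function for Schur partitions all of whose parts are at least $m$, and condition on whether $m$ itself appears. Using that the gap to the next part must be at least $3$ in general but at least $4$ when $m\equiv 0\pmod 3$ (to forbid the pair $m,m+3$ of consecutive multiples of $3$), one obtains
\[
f_m(q) \;=\; f_{m+1}(q) + q^m f_{m+c(m)}(q), \qquad c(m)=\begin{cases}4,&3\mid m,\\ 3,&3\nmid m,\end{cases}
\]
with target $f_1(q)$. Packaging the three residue classes by an extra generating variable $x$ marking the number of parts converts this system into a single functional equation for $F(x,q)=\sum_{\lambda}x^{\ell(\lambda)}q^{|\lambda|}$, where the sum is over all Schur partitions; iteration then expresses $F(x,q)$ as an explicit $q$-hypergeometric series.

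The main obstacle is the matching step: one must show that the resulting series for $F(1,q)$ equals $(-q;q^3)_\infty(-q^2;q^3)_\infty$. This is a non-trivial $q$-series identity, because the coupling of the three residue classes induced by the forbidden-pair condition prevents a term-by-term factorisation. The cleanest route, and the one most consonant with the techniques of the present paper, is the Alladi--Gordon method of weighted words: colour the positive integers according to their residue modulo $3$, encode the forbidden pair of consecutive multiples of $3$ as a colour-dependent minimal-gap condition, and establish a refined product-sum identity on the coloured parts. Schur's theorem then drops out by specialisation of the colour variables, and the same framework will furnish the $q$-difference equations that the main theorem of the paper exploits.
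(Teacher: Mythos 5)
Your proposal stops exactly where the work begins. The $A$-side product and the recurrence
$f_m(q)=f_{m+1}(q)+q^m f_{m+c(m)}(q)$ with $c(m)=4$ or $3$ according to whether $3\mid m$ are correct and standard, but everything after that is a plan rather than an argument: you do not actually derive the functional equation for $F(x,q)$, do not exhibit the ``explicit $q$-hypergeometric series'' obtained by iteration, and you explicitly concede that the matching step --- showing this series equals $(-q;q^3)_\infty(-q^2;q^3)_\infty$ --- is a non-trivial identity that you have not proved. Saying that ``the cleanest route is the Alladi--Gordon method of weighted words'' and that ``Schur's theorem then drops out by specialisation'' is naming a method, not executing it; the coloured refinement (the identity of Theorem~\ref{schurnondil}, or its overpartition generalisation) is itself the substantive theorem one must prove, e.g.\ by Bressoud-style bijections or by the $q$-difference-equation induction carried out in Section~3 of this paper. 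As it stands, you have reduced Schur's theorem to precisely the statement whose proof is missing.

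For comparison, this paper does not prove Theorem~\ref{schur} from scratch either: it is treated as the classical result of Schur, recovered as a special case of the general machinery --- the $k=0$, $N=3$, $r=2$, $a(1)=1$, $a(2)=2$ case of Theorems~\ref{dousse}/\ref{refdousse}, or equivalently Theorem~\ref{schurnondil} under the dilation $q\to q^3$, $a\to aq^{-2}$, $b\to bq^{-1}$ --- and that general theorem is established by combining weighted words with the $q$-difference equation $(\mathrm{eq}_r)$ and an induction on $r$ (Lemmas~\ref{lemma1}--\ref{equalAA'} and Theorem~\ref{main}). If you want a self-contained proof along your lines, you must either carry out Andrews' original analysis (set up the coupled system in the three residue classes, solve it, and verify the limiting product) or prove the weighted-words identity directly; until one of those is done, there is a genuine gap.
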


Schur's theorem became very influential and several proofs have been given using a variety of different techniques~\cite{Alladi,Andrews2,Andrews1,Andrews3,Bessenrodt,Bressoud}. For our purposes in this article, the most significant proofs are a proof of Alladi and Gordon~\cite{Alladi} using the method of weighted words and two proofs of Andrews~\cite{Andrews2,Andrews1} using recurrences and $q$-difference equations.

The idea of the method of weighted words of Alladi and Gordon is to give a combinatorial interpretation of the infinite product
$$ \prod_{n \geq 1} (1+aq^n)(1+bq^n)$$
as the generating function for partitions whose parts appear in three colours $a,b,ab.$

More precisely, they consider the following ordering of colours
\begin{equation} \label{colororder}
ab < a < b,
\end{equation}
giving the following ordering on coloured positive integers
$$1_{ab} < 1_a < 1_b < 2_{ab} < 2_a < 2_b < \cdots.$$
Denoting by $c(\lambda)$ the colour of $\lambda$, their refinement of Schur's theorem can be stated as follows.
\begin{theorem}[Alladi-Gordon]
\label{schurnondil}
Let $A(u,v,n)$ be the number of partitions of $n$ into $u$ distinct parts coloured $a$ and $v$ distinct parts coloured $b$.

Let $B(u,v,n)$ be the number of partitions $\lambda_1 + \cdots + \lambda_s$ of $n$ into distinct parts with no part $1_{ab}$, such that the difference $\lambda_i - \lambda_{i+1} \geq 2$ if $c(\lambda_{i}) = ab$ or $c(\lambda_i) < c(\lambda_{i+1})$ in \eqref{colororder}, having $u$ parts $a$ or $ab$ and $v$ parts $b$ or $ab$.

Then 
$$\sum_{u,v,n \geq 0} A(u,v,n) a^u b^vq^n = \sum_{u,v,n \geq 0} B(u,v,n) a^u b^vq^n = \prod_{n \geq 1} (1+aq^n)(1+bq^n).$$
\end{theorem}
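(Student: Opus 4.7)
The first equality $\sum_{u,v,n} A(u,v,n)\,a^u b^v q^n = \prod_{n \geq 1}(1+aq^n)(1+bq^n)$ is immediate: expanding the product, each factor $(1+aq^n)$ records the choice of whether to include the coloured integer $n_a$ and each $(1+bq^n)$ the choice of whether to include $n_b$, so the resulting series is exactly the generating function for pairs $(\lambda,\mu)$ with $\lambda$ a partition into distinct $a$-parts and $\mu$ a partition into distinct $b$-parts, which by definition is $\sum A(u,v,n) a^u b^v q^n$.

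For the substantive equality $\sum B(u,v,n)\,a^u b^v q^n = \prod_{n \geq 1}(1+aq^n)(1+bq^n)$, I would follow Alladi and Gordon's weighted-words strategy and construct an explicit weight-preserving bijection
$$\Phi\colon\{(\lambda,\mu)\}\longrightarrow\{\nu\text{ counted by }B\}$$
that preserves the triple $(u,v,n)$. Given $(\lambda,\mu)$, first merge the two sequences into a single weakly decreasing sequence in the colour order \eqref{colororder}. Then sweep this sequence from the bottom upward, and whenever two adjacent entries violate the $B$-gap condition — either sharing the same integer value, or forming an $a$-part immediately above a $b$-part at gap exactly $1$ — fuse them into one $ab$-part whose value equals the sum of the fused parts, and propagate the size adjustment to the entries above. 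Each move preserves $u$, $v$, and $n$; since an $ab$-part created by fusion necessarily has value $\geq 2$, the output never contains $1_{ab}$.

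The principal technical obstacle is verifying that this local-move procedure is well-defined, terminates, and admits a unique inverse. I would establish termination by exhibiting a monovariant (for example, a lexicographic measure of the lowest remaining violation) that strictly decreases at each step, and invertibility by describing the reverse algorithm, which scans a $B$-partition from the top, splits $ab$-parts back into $a$- and $b$-parts, and undoes the local swaps. As a convenient cross-check, the dilation $q \mapsto q^3$, $a \mapsto aq^{-2}$, $b \mapsto bq^{-1}$ transforms the $B$-gap conditions into Schur's original ``gap $\geq 3$ and no two consecutive multiples of $3$,'' reducing Theorem \ref{schurnondil} to Theorem \ref{schur}; this means $\Phi$ can equivalently be obtained by transporting any of the known combinatorial proofs of Schur's theorem (e.g.\ those in \cite{Bressoud,Bessenrodt}) back to the coloured setting, providing an independent route should the direct analysis of the local moves become unwieldy.
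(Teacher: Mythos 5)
The first equality (the $A$-side equals the product) is fine, but for the substantive $B$-side your text is a plan rather than a proof: well-definedness, termination and invertibility of the fusion procedure are precisely the content of the theorem, and you defer all three (``I would establish termination\dots'', ``invertibility by describing the reverse algorithm''). Moreover, the procedure as you describe it already fails on a small example. Take $\lambda=\{2_a,1_a\}$, $\mu=\{1_b\}$, so $(u,v,n)=(2,1,4)$. The merged sequence is $2_a,\,1_b,\,1_a$; sweeping from the bottom you first fuse $1_b$ and $1_a$ into $2_{ab}$, leaving $2_a,\,2_{ab}$. This still violates the $B$-conditions (the parts must have distinct values: under the dilation $q\to q^3$, $a\to aq^{-2}$, $b\to bq^{-1}$ this pair becomes $4,3$, which Schur's theorem forbids), and your only move --- fuse the two offenders into an $ab$-part --- is unavailable, since an $a$-part cannot be fused with an $ab$-part in a two-primary-colour alphabet, and replacing the pair by $4_{ab}$ would destroy the count $u$. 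The unique $B$-partition with $(u,v,n)=(2,1,4)$ is $3_{ab}+1_a$, which your bottom-up sweep never produces; reaching it requires a genuinely different local move (value-shifting in the style of Bressoud's algorithm), together with a proof that the order of moves does not matter. The phrase ``propagate the size adjustment to the entries above'' gestures at this but is never defined, so the map $\Phi$ is not actually constructed.

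Your fallback is also not available as stated: the dilation derives (a refinement of) Schur's theorem \emph{from} Theorem~\ref{schurnondil}, not conversely, since the dilated identity is a specialisation; knowing Theorem~\ref{schur} does not yield the coloured, undilated statement, and ``transporting'' a proof of Schur's theorem back to the coloured level is exactly the nontrivial step of the method of weighted words, not a cross-check one gets for free. For the record, the present paper does not prove this theorem at all --- it is quoted from Alladi--Gordon, whose argument computes the generating function for the $B$-partitions with prescribed numbers of parts of colours $a$, $b$, $ab$ (a triple sum with denominator $(q;q)_i(q;q)_j(q;q)_k$) and reduces it to $(-aq;q)_\infty(-bq;q)_\infty$ by $q$-series identities; a bijective route exists but goes through Bressoud's algorithm, which is substantially more delicate than the naive fusion you propose. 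As it stands, your proposal has a genuine gap at its central step.
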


Doing the transformations
$$q \rightarrow q^3, a \rightarrow a q^{-2}, b \rightarrow bq^{-1},$$
one obtains a refinement of Schur's theorem. For details, see~\cite{Alladi}.

On the other hand, using the ideas of his proofs with $q$-difference equations~\cite{Andrews2,Andrews1}, Andrews was able to generalise Schur's theorem in two different ways \cite{Generalisation2,Generalisation1}. Let us now recall some notation due to Andrews in order to state his generalisations.

Let $A=\lbrace a(1),\dots, a(r) \rbrace$ be a set of $r$ distinct positive integers such that $\sum_{i=1}^{k-1} a(i) < a(k)$ for all $1 \leq k \leq r$. Note that the $2^r -1$ possible sums of distinct elements of $A$ are all distinct. We denote this set of sums by $A'=\lbrace \alpha(1),\dots, \alpha(2^r -1) \rbrace$, where $\alpha(1) < \cdots < \alpha(2^r-1)$.
Let $N$ be a positive integer with $N \geq \alpha(2^r-1) = a(1) +\cdots+a(r).$ We further define $\alpha(2^r)=a(r+1)=N+a(1).$ Let $A_N$ (resp. $-A_N$) denote the set of positive integers congruent to some $a(i) \mod N$ (resp. $-a(i) \mod N$), $A'_N$ (resp. $-A'_N$) the set of positive integers congruent to some $\alpha(i) \mod N$ (resp. $-\alpha(i) \mod N$). Let $\beta_N(m)$ be the least positive residue of $m \mod N$. If $\alpha \in A'$, let $w_A(\alpha)$ be the number of terms appearing in the defining sum of $\alpha$ and $v_A(\alpha)$ (resp. $z_A(\alpha)$) the smallest (resp. the largest) $a(i)$ appearing in this sum.

The simplest example is the one where $a(k)=2^{k-1}$ for $1 \leq k \leq r$ and $\alpha(k)=k$ for $1 \leq k \leq 2^r-1$.

\begin{theorem}[Andrews]
\label{andrews}
Let $D(A_N;n)$ denote the number of partitions of $n$ into distinct parts taken from $A_N$. Let $E(A'_N;n)$ denote the number of partitions of $n$ into parts taken from $A'_N$ of the form $n=\lambda_1+\cdots+ \lambda_s$, such that
\[\lambda_i - \lambda_{i+1} \geq N w_A(\beta_N(\lambda_{i+1}))+v_A(\beta_N(\lambda_{i+1}))-\beta_N(\lambda_{i+1}).\]
Then for all $n \geq 0$, $$D(A_N;n)= E(A'_N;n).$$
\end{theorem}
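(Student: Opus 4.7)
My plan is to lift Theorem~\ref{andrews} to the undilated setting of weighted words \`a la Alladi--Gordon, prove the lifted statement by $q$-difference equations, and recover Andrews' theorem by a colour-dependent dilation.

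At the undilated level, I introduce $r$ commuting variables $c_1,\dots,c_r$ and colour the positive integers by nonempty subsets $S\subseteq\{1,\dots,r\}$, weighting a part of size $n$ and colour $S$ by $q^n\prod_{i\in S}c_i$. Via the bijection $S\mapsto\sum_{i\in S}a(i)=\alpha(S)$ between colours and elements of $A'$, the ordering of $A'$ induces a total order on colours, hence on all coloured integers. The identity to prove at this level is
\[
\prod_{n\geq 1}\prod_{i=1}^{r}\bigl(1+c_i q^n\bigr)=\sum_\lambda w(\lambda)\,q^{|\lambda|},
\]
where the sum runs over coloured partitions into distinct coloured parts subject to an undilated analogue of the gap condition in the theorem.

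To establish this, I would follow Andrews' $q$-difference equation approach. For each colour $S$, let $f_S(c_1,\dots,c_r;q)$ be the generating function for admissible coloured partitions whose largest part has colour $S$. Peeling off that largest part and shifting the rest of the partition downward by the minimal admissible amount produces a closed system of $q$-difference equations among the $f_S$; I would solve the system and check that $1+\sum_S f_S$ coincides with the above infinite product. The technical heart, and the main obstacle, is checking that the specific gap $N w_A(\beta_N(\lambda_{i+1}))+v_A(\beta_N(\lambda_{i+1}))-\beta_N(\lambda_{i+1})$ of Theorem~\ref{andrews} corresponds at the undilated level to exactly the penalty needed so that removing the largest part returns a partition of the same type. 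It is here that the structure of $A$ (the condition $\sum_{i<k}a(i)<a(k)$, which guarantees that each $\alpha\in A'$ has a unique decomposition and hence that $w_A$, $v_A$ and $z_A$ are well defined) enters in an essential way.

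Finally, I would apply the dilation $q\mapsto q^N$, $c_i\mapsto q^{a(i)-N}$. A coloured part of undilated size $m$ and colour $S$ is sent to the positive integer $Nm+\alpha(S)-N|S|\equiv\alpha(S)\pmod N$, with smallest value $\alpha(S)$ attained at $m=|S|$; the product becomes $\prod_{i=1}^{r}\prod_{m\geq 0}(1+q^{mN+a(i)})$, which is the generating function for $D(A_N;n)$; and a routine calculation translates the undilated gap condition into the gap condition defining $E(A'_N;n)$. Comparing coefficients of $q^n$ then yields $D(A_N;n)=E(A'_N;n)$.
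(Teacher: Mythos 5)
Your overall architecture --- lift to weighted words, prove the coloured identity by $q$-difference equations, then dilate --- is exactly the framework used here: within this paper, Theorem~\ref{andrews} is the $k=0$, $\sigma=\mathrm{Id}$ specialisation of Theorem~\ref{refdousse}, which is obtained from Theorem~\ref{refinement} by the dilation $q\to q^N$, $u_i\to u_iq^{a(i)}$ (your normalisation with positive parts and $c_i\mapsto q^{a(i)-N}$ is an equivalent variant). But as written your proposal has a genuine gap: the analytic core --- the claim that peeling the largest part yields a closed, solvable system of $q$-difference equations whose solution is $\prod_{n\ge1}\prod_{i=1}^r(1+c_iq^n)$ --- is only announced (``I would solve the system and check''), not carried out, and this is where essentially all the work lies (in the paper it occupies Section~3, with an induction on $r$ and a chain of lemmas; Andrews' own proof is likewise the substance of his papers). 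Note also that peeling the largest part and ``shifting the rest of the partition downward by the minimal admissible amount'' is not a uniform operation: the minimal gap between $\lambda_i$ and $\lambda_{i+1}$ depends on the colour of $\lambda_{i+1}$ and on $\delta(c(\lambda_i),c(\lambda_{i+1}))$, which is precisely why the paper's Lemma~\ref{lemma1} peels the \emph{smallest} part, so that a single quantity $w(c(\lambda_m))$ can be subtracted from all remaining parts.

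The second gap is in the dilation step, which you call routine. Translating Andrews' gap $Nw_A(\beta_N(\lambda_{i+1}))+v_A(\beta_N(\lambda_{i+1}))-\beta_N(\lambda_{i+1})$ into the coloured condition $w(c(\lambda_{i+1}))+\delta(c(\lambda_i),c(\lambda_{i+1}))$ requires two specific observations: that $\lambda_i-\lambda_{i+1}\equiv\beta_N(\lambda_i)-\beta_N(\lambda_{i+1})\pmod N$, and the equivalence $v_A(\alpha)>\beta$ if and only if $v_A(\alpha)>z_A(\beta)$ (Lemma~\ref{comparison}), which is where the hypothesis $\sum_{i=1}^{k-1}a(i)<a(k)$ is genuinely used --- not merely, as you suggest, to make $w_A$, $v_A$, $z_A$ well defined. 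Without this comparison step your dilation does not visibly produce the difference condition defining $E(A'_N;n)$. Finally, your undilated statement must include the boundary condition that a part of colour $S$ is at least $|S|$ (the analogue of forbidding $1_{ab}$ in Schur's case); otherwise the dilated parts need not lie in $A'_N$, so this cannot be left implicit in ``an undilated analogue of the gap condition''.
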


\begin{theorem}[Andrews]
\label{andrews2}
Let $F(-A_N;n)$ denote the number of partitions of $n$ into distinct parts taken from $-A_N$. Let $G(-A'_N;n)$ denote the number of partitions of $n$ into parts taken from $-A'_N$ of the form $n=\lambda_1+\cdots+ \lambda_s$, such that
$$\lambda_i - \lambda_{i+1} \geq N w_A(\beta_N(-\lambda_{i}))+v_A(\beta_N(-\lambda_{i}))-\beta_N(-\lambda_{i}),$$
and
$$\lambda_s \geq N (w_A(\beta_N(-\lambda_s)-1).$$
Then for all $n \geq 0$, $$F(-A_N;n)= G(-A'_N;n).$$
\end{theorem}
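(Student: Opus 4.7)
My strategy is to lift Theorem~\ref{andrews2} to a weighted-words statement in the spirit of Theorem~\ref{schurnondil}, but with the colour ordering \eqref{colororder} \emph{reversed}. Introduce $r$ primary colours $a_1,\dots,a_r$ (one for each $a(i)$) together with all $2^r-1$ non-empty product colours $a_{i_1}\cdots a_{i_k}$, and impose a total order on coloured positive integers in which every product colour lies strictly above each of its primary factors---so that in the companion picture it is the \emph{larger} part $\lambda_i$ whose colour controls the gap. I then formulate a weighted-words companion asserting that
\[
\prod_{i=1}^r\prod_{n\geq 1}(1+a_iq^n)
\]
also generates coloured partitions $\lambda_1+\cdots+\lambda_s$ whose numerical weights $k_i$ obey
\[
k_i-k_{i+1}\geq w(c(\lambda_i))+\varepsilon\bigl(c(\lambda_i),c(\lambda_{i+1})\bigr),
\]
where $w(c)$ is the number of primary factors of $c$ and $\varepsilon\in\{0,1\}$ records whether the smallest primary factor of $c(\lambda_i)$ outranks $c(\lambda_{i+1})$, together with a minimum-weight condition $k\geq w(c)$ on the smallest part.

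To establish this refined statement, I invoke the $q$-difference-equation technique of the abstract: conditioning on the colour of the smallest part and removing it produces a closed system of recurrences for the relevant generating function $f(a_1,\dots,a_r;q)$, and one checks that $\prod_{i=1}^r\prod_{n\geq 1}(1+a_iq^n)$ satisfies the same system with the correct initial data. Finally, dilate by $q\mapsto q^N$ and $a_i\mapsto a_iq^{-a(i)}$. A part of colour $a_{i_1}\cdots a_{i_k}$ and weight $k$ becomes $Nk-(a(i_1)+\cdots+a(i_k))$, which is congruent to $-\alpha\pmod N$ for some $\alpha\in A'$; so the parts of the companion partition land in $-A'_N$, the product side specialises to the generating function of $F(-A_N;n)$, and a direct arithmetic computation (using $\beta_N(-\lambda_i)$ to recover the colour of $\lambda_i$) converts the weighted-words difference condition into $\lambda_i-\lambda_{i+1}\geq Nw_A(\beta_N(-\lambda_i))+v_A(\beta_N(-\lambda_i))-\beta_N(-\lambda_i)$ and the minimum-weight condition into $\lambda_s\geq N(w_A(\beta_N(-\lambda_s))-1)$.

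The principal obstacle is pinning down the correct $\varepsilon$-adjustment and the minimum-weight cutoff in the weighted-words formulation so that, after dilation, they match exactly the expressions $Nw_A+v_A-\beta_N$ and $N(w_A-1)$ appearing in Theorem~\ref{andrews2}; once the right combinatorial conditions are in place, the $q$-difference step is essentially bookkeeping. The ordering on product colours must also be rich enough for colours of different sizes to interact correctly in the recursion, which is where the sharpened contrast with Alladi--Gordon's ordering becomes essential.
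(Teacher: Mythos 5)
Your dilation arithmetic is sound: with parts $Nk_i-\alpha(c(\lambda_i))$, the condition $k_i-k_{i+1}\geq w(c(\lambda_i))+\varepsilon$ with $\varepsilon=1$ exactly when $v(c(\lambda_i))>z(c(\lambda_{i+1}))$, together with $k_s\geq w(c(\lambda_s))$, does translate into $\lambda_i-\lambda_{i+1}\geq Nw_A(\beta_N(-\lambda_i))+v_A(\beta_N(-\lambda_i))-\beta_N(-\lambda_i)$ and $\lambda_s\geq N(w_A(\beta_N(-\lambda_s))-1)$ (this uses the observation, proved as Lemma~\ref{comparison} in the paper, that $v_A(\alpha)>\beta$ iff $v_A(\alpha)>z_A(\beta)$), and the product side specialises to the generating function of $F(-A_N;n)$. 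Note also that the paper does not prove Theorem~\ref{andrews2} directly: it is Andrews' theorem, quoted, and recovered here as the $k=0$, $\sigma=(r,r-1,\dots,1)$ case of the overpartition machinery (Theorem~\ref{refinement} $\to$ Theorem~\ref{refdousse2} $\to$ Theorem~\ref{dousse2}); the weighted-words statement is always proved with the gap governed by the \emph{smaller} part's colour, and the ``second hierarchy'' is reached only afterwards, by the colour-permutation symmetry together with the dilation $u_i\to u_iq^{N-a(i)}$.

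The genuine gap is in your middle step, and it is twofold. First, the entire difficulty of this circle of results lies precisely in the sentence you dismiss as bookkeeping: showing that the coloured generating function defined by the difference conditions satisfies a functional system whose solution is $\prod_i\prod_{n\geq1}(1+a_iq^n)$. For $r$ colours this is an order-$r$ system (not reducible to first order as in Andrews' single-variable proofs), and resolving it is the bulk of the paper's Section~3 (Lemmas~\ref{lemmaF}--\ref{equalAA'}, an induction on $r$, and Appell's comparison theorem); Corteel--Lovejoy avoid it only by an iterated Bressoud-style bijection. Asserting that ``one checks'' the product satisfies the system is not a proof of the key identity. Second, the specific recursion you propose does not mesh with your reversed ordering: if the minimal gap above $\lambda_{i+1}$ is $w(c(\lambda_i))+\varepsilon(c(\lambda_i),c(\lambda_{i+1}))$, i.e.\ governed by the colour of the \emph{larger} part, then after deleting the smallest part there is no fixed amount you can subtract from the remaining parts to renormalise (the freed-up gap depends on the colour of the part above the one removed, which varies), so ``conditioning on the colour of the smallest part and removing it'' does not yield a closed system of the usual type. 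The known ways out are exactly the ones you bypass: either recast the companion in the standard ordering via the shift $k_i=\lambda_i+w(c(\lambda_i))$, so that the gap is governed by $w(c(\lambda_{i+1}))$ and smallest-part removal works, and then transfer by the permutation symmetry (the paper's and Corteel--Lovejoy's route), or build the $q$-difference equation on the largest part, as Andrews does for this hierarchy. As written, the plan identifies the right target statement but leaves its proof both unexecuted and set up in a way that would not go through without one of these corrections.
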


Schur's theorem corresponds to the case $N=3$, $r=2$, $a(1)=1, a(2)=2$.

Andrews' identities led to a number of important developments in combinatorics~\cite{Alladi1,Corteel,Yee}, group representation theory~\cite{AndrewsOlsson} and quantum algebra~\cite{Oh}.

In 2006, Corteel and Lovejoy~\cite{Corteel} combined the ideas of Alladi-Gordon and Andrews to prove a general theorem on coloured partitions which unifies and refines Andrews' two hierarchies of partition identities. To state their refinement (slightly reformulated to fit our purposes), we need to introduce some more notation.

Let $r$ be a positive integer. We define $r$ primary colours $u_1, \dots, u_r$ and use them to define $2^r-1$ colours $\tilde{u}_1, \dots , \tilde{u}_{2^r-1}$ as follows:
$$ \tilde{u}_i := u_1^{\epsilon_1(i)} \cdots u_r^{\epsilon_r(i)},$$
where
$$\epsilon_k(i) := \begin{cases}
1 \text{ if } 2^{k-1} \text{ appears in the binary expansion of } i \\
0 \text{ otherwise.}
\end{cases}$$
They are ordered in the natural ordering, namely
$$\tilde{u}_1 < \cdots < \tilde{u}_{2^r-1}.$$
Now for all $i \in \{1, \dots, 2^r-1 \},$ let $v(\tilde{u}_i)$ (resp. $z(\tilde{u}_i)$) be the smallest (resp. largest) primary colour appearing in the colour $\tilde{u}_i$ and $w(\tilde{u}_i)$ be the number of primary colours appearing in $\tilde{u}_i$. Finally, for $i,j \in \{1, \dots, 2^r-1 \},$ let 
$$\delta(\tilde{u}_i, \tilde{u}_j) :=  \begin{cases}
1 \text{ if } z(\tilde{u}_i) < v(\tilde{u}_j)\\
0 \text{ otherwise.}
\end{cases}$$

In a slightly modified version, Corteel and Lovejoy's theorem may be stated as follows.

\begin{theorem}[Corteel-Lovejoy]
\label{th:corlov}
Let $D(\ell_1, \dots, \ell_r;n)$ denote the number of partitions of $n$ into distinct non-negative parts, each part being coloured in one of the primary colours $u_1, \dots, u_r$, having $\ell_i$ parts coloured $u_i$ for all $i \in \{1, \dots,r\}$. Let $E(\ell_1, \dots, \ell_r;n)$ denote the number of partitions $\lambda_1+\cdots+ \lambda_s$ of $n$ into distinct non-negative parts, each part being coloured in one of the colours $\tilde{u}_1, \dots, \tilde{u}_{2^r-1}$, such that for all  $i \in \{1, \dots,r\}$, $\ell_i$ parts have $u_i$ as one of their primary colours, satisfying the difference conditions
$$\lambda_i - \lambda_{i+1} \geq w(c(\lambda_{i+1}))+\delta(c(\lambda_i),c(\lambda_{i+1})).$$
Then for all $\ell_1, \dots, \ell_r,n \geq 0$,
$$D(\ell_1, \dots, \ell_r;n)= E(\ell_1, \dots, \ell_r;n).$$
\end{theorem}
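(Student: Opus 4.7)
The plan is to establish the generating function identity
\[
\sum_{\ell_1, \dots, \ell_r, n \ge 0} E(\ell_1, \dots, \ell_r; n)\, u_1^{\ell_1} \cdots u_r^{\ell_r}\, q^n \;=\; \prod_{k \ge 0} \prod_{i=1}^{r} (1 + u_i q^k),
\]
whose right-hand side plainly enumerates the partitions counted by $D(\ell_1, \dots, \ell_r; n)$: a $D$-partition is just a subset of the coloured integers $\{k_{u_i} : k \ge 0,\ 1 \le i \le r\}$. The theorem then follows by comparing coefficients of $u_1^{\ell_1} \cdots u_r^{\ell_r} q^n$. As a preliminary step, I would place the $E$-partitions in the Alladi--Gordon weighted-words framework, equipping the coloured non-negative integers with the total order in which $n_{\tilde u_i} < m_{\tilde u_j}$ iff $n < m$, or $n = m$ and $\tilde u_i < \tilde u_j$; an $E$-partition then becomes a strictly decreasing sequence of coloured integers whose consecutive gaps are governed by $w(\cdot) + \delta(\cdot,\cdot)$.

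Next, following the $q$-difference-equation technique, I would introduce refined generating series: let $F_j(q) = F_j(u_1, \dots, u_r; q)$ enumerate the $E$-partitions whose smallest part has colour $\tilde u_j$, and set $F(q) = 1 + \sum_{j=1}^{2^r-1} F_j(q)$. Peeling off the smallest part $\lambda_s$ of colour $\tilde u_j$ leaves an $E$-partition whose smallest part has some colour $\tilde u_{j'}$ and is at least $\lambda_s + w(\tilde u_j) + \delta(\tilde u_{j'}, \tilde u_j)$, so each $F_j(q)$ can be written as a linear combination, indexed by $\tilde u_{j'}$, of dilated copies of the $F_{j'}(q)$'s. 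This produces a finite system of $q$-difference equations for the vector $(F_j(q))_{j}$.

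The heart of the argument is then to solve this system and recognise the solution as $\prod_{k \ge 0} \prod_{i=1}^{r}(1 + u_i q^k)$. I would proceed by induction on $r$, pairing up the $2^r - 1$ colours $\tilde u_j$ according to whether the primary colour $u_r$ appears in them: this should allow one to peel off a factor of $\prod_{k \ge 0}(1 + u_r q^k)$ from $F(q)$ and reduce to the identity with $r-1$ primary colours, which is already known by the inductive hypothesis. The main obstacle I anticipate is controlling the $\delta$-contribution to the gap: it depends jointly on two consecutive colours and scrambles any naive decomposition of $F(q)$. The weighted-words ordering $\tilde u_1 < \cdots < \tilde u_{2^r-1}$ must be exploited to show that the extra $+1$ gap is triggered exactly at the boundary between colours containing $u_r$ and those not containing it, which is what should make the inductive step close cleanly.
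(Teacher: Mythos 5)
Your plan is, in essence, the method this paper itself develops: weighted-words generating functions refined by the smallest part, a system of $q$-difference equations obtained by removing that part, and an induction on $r$ that peels off the factor $(-u_r;q)_{\infty}$. Note, however, that the paper does not prove Theorem~\ref{th:corlov} this way directly: it cites Corteel--Lovejoy's original proof, which iterates Bressoud's bijection in the weighted-words setting, and instead proves the overpartition generalisation (Theorem~\ref{refinement}), of which Theorem~\ref{th:corlov} is the $k=0$ case, by the $q$-difference-equation route you describe. So your route is a legitimate alternative to the cited bijective proof, and it is the one carried out in Section~3 (with the extra variable $d$ marking non-overlined parts, which you can simply set to $0$).

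Two concrete gaps in the proposal as written. First, the series $F_j(u_1,\dots,u_r;q)$ cannot satisfy a closed system: removing a smallest part of colour $\tilde u_j$ forces you to subtract $w(\tilde u_j)$ from every remaining part, a shift proportional to the number of parts, and the number of parts is not recoverable from $\ell_1,\dots,\ell_r$ (a part of colour $\tilde u_j$ contributes to $w(\tilde u_j)$ of the $\ell_i$'s simultaneously). This is why the paper works with $f_{i_{\tilde u_j}}(x)$, where $x$ marks the number of parts and the recursion appears as the dilation $x\mapsto xq^{w(\tilde u_j)}$; your unspecified ``dilated copies'' must be of this form, so the extra variable is not optional. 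Second, the inductive peeling of $\prod_{k\ge0}(1+u_rq^k)$ is exactly where the difficulty sits, and the proposal offers no mechanism for it: in the paper it requires combining the elementary recurrences into a single order-$r$ $q$-difference equation with $q$-binomial coefficients (Lemma~\ref{conj}), a chain of transformations between functions and coefficient sequences, a nontrivial identification of two recurrences (Lemma~\ref{equalAA'}), and Appell's comparison theorem to let $x\to1$. Moreover, your guiding heuristic that the extra $+1$ in the gap is triggered ``exactly at the boundary between colours containing $u_r$ and those not containing it'' is not correct: $\delta(c(\lambda_i),c(\lambda_{i+1}))=1$ whenever $z(c(\lambda_i))<v(c(\lambda_{i+1}))$, which occurs between many pairs of colours having nothing to do with $u_r$ (e.g.\ $c(\lambda_i)=u_1$, $c(\lambda_{i+1})=u_2$), so the induction cannot be organised around that dichotomy alone.
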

The proof of Theorem~\ref{th:corlov} relies on the iteration of a bijection originally discovered by Bressoud~\cite{Bressoud2} and adapted by Alladi and Gordon to the context of weighted words~\cite{Alladi2}.

Corteel and Lovejoy then noticed that the partitions counted by $D(\ell_1, \dots, \ell_r;n)$ and $E(\ell_1, \dots, \ell_r;n)$ have some symmetry properties and took advantage of them to prove an even more general theorem.

Let $\sigma \in S_r$ be a permutation. For every colour $\tilde{u}_i = u_1^{\epsilon_1(i)} \cdots u_r^{\epsilon_r(i)}$, we define the colour
$$\sigma(\tilde{u}_i):= u_{\sigma(1)}^{\epsilon_1(i)} \cdots u_{\sigma(r)}^{\epsilon_r(i)}.$$

Now for every partition $\lambda$ counted by $E(\ell_1, \dots, \ell_r;n)$, we define a new partition $\lambda^{\sigma}$ obtained by setting $\lambda_i^{\sigma} = \lambda_i$ and $c(\lambda_i^{\sigma})=\sigma(c(\lambda_i))$. This mapping is easily reversible by using the inverse permutation $\sigma^{-1}$ on $\lambda^{\sigma}$.
This transformation doesn't change $w(c(\lambda_{i+1}))$, so the difference condition we obtain on $\lambda^{\sigma}$ is
\begin{equation}
\label{cond_diff_sigma}
\lambda_i^{\sigma} - \lambda_{i+1}^{\sigma} \geq w(c(\lambda_{i+1}^{\sigma}))+\delta(\sigma^{-1}(c(\lambda_i^{\sigma})),\sigma^{-1}(c(\lambda_{i+1}^{\sigma}))).
\end{equation}
Thus $E(\ell_1, \dots, \ell_r;n) = E^{\sigma}(\ell_{\sigma^{-1}(1)}, \dots, \ell_{\sigma^{-1}(r)};n)$, where $E^{\sigma}(\ell_1, \dots, \ell_r;n)$ denotes the number of partitions of $n$ into distinct non-negative parts, each part being coloured in one of the colours $\tilde{u}_1, \dots, \tilde{u}_{2^r-1}$, such that for all  $i \in \{1, \dots,r\}$, $\ell_i$ parts have $u_i$ as one of their primary colours, satisfying the difference condition~\eqref{cond_diff_sigma}.

Moreover, by doing the same transformation on the partitions counted by $D(\ell_1, \dots, \ell_r;n),$ one can see that
$$D(\ell_1, \dots, \ell_r;n)=D(\ell_{\sigma^{-1}(1)}, \dots, \ell_{\sigma^{-1}(r)};n).$$

Thus one has
\begin{corollary}[Corteel-Lovejoy]
For every permutation $\sigma \in S_r$,
$$D(\ell_{\sigma^{-1}(1)}, \dots, \ell_{\sigma^{-1}(r)};n)=E^{\sigma}(\ell_{\sigma^{-1}(1)}, \dots, \ell_{\sigma^{-1}(r)};n).$$
\end{corollary}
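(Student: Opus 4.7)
The plan is to chain together the two symmetry observations spelled out in the preceding paragraphs with the main identity of Theorem~\ref{th:corlov}. Starting from the left-hand side $D(\ell_{\sigma^{-1}(1)}, \dots, \ell_{\sigma^{-1}(r)};n)$, the first step is to apply the symmetry of $D$ under relabelling of the primary colours, namely
$$D(\ell_{\sigma^{-1}(1)}, \dots, \ell_{\sigma^{-1}(r)};n) = D(\ell_1, \dots, \ell_r;n).$$
This reduces to noting that the map $\lambda \mapsto \lambda^{\sigma^{-1}}$, which permutes only the primary colour labels on each part, is a size-preserving involutive bijection on the class of partitions counted by $D$ (there is no difference condition to check here, only distinctness, which is preserved).

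Second, I would invoke Theorem~\ref{th:corlov} directly to replace $D(\ell_1, \dots, \ell_r;n)$ by $E(\ell_1, \dots, \ell_r;n)$. Third, I would apply the $\sigma$-relabelling $\lambda \mapsto \lambda^\sigma$ on the $E$-side: this is again a size-preserving bijection which transforms parts having $u_i$ as one of their primary colours into parts having $u_{\sigma(i)}$ as one of their primary colours, and which, as explained just before the statement, converts the difference condition of Theorem~\ref{th:corlov} into the condition~\eqref{cond_diff_sigma} defining $E^\sigma$. Hence $E(\ell_1, \dots, \ell_r;n) = E^\sigma(\ell_{\sigma^{-1}(1)}, \dots, \ell_{\sigma^{-1}(r)};n)$.

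Composing the three equalities yields the corollary. The only potential obstacle is purely clerical: one has to be careful about the direction of the re-indexing, since applying $\sigma$ to the colours of a partition in $E$ moves the count of parts containing colour $u_i$ to the count of parts containing $u_{\sigma(i)}$, which is why the subscript $\ell_{\sigma^{-1}(i)}$ appears on the right-hand side. No new combinatorial argument is needed beyond the symmetries already described.
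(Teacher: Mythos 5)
Your proposal is correct and is essentially the paper's own argument: the paper derives the corollary by exactly this chain, using the colour-relabelling bijection $\lambda\mapsto\lambda^{\sigma}$ to get $E(\ell_1,\dots,\ell_r;n)=E^{\sigma}(\ell_{\sigma^{-1}(1)},\dots,\ell_{\sigma^{-1}(r)};n)$ and the analogous relabelling on the $D$-side, combined with Theorem~\ref{th:corlov}. One trivial quibble: the relabelling map is a bijection (with inverse given by $\sigma^{-1}$, resp.\ $\sigma$) but not an involution unless $\sigma^2=\mathrm{Id}$; this does not affect the argument.
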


One obtains a refinement of Theorem~\ref{andrews} by using the permutation $\sigma=Id$ and doing the transformations
$$ q \rightarrow q^N, u_1 \rightarrow u_1q^{a(1)}, \dots u_r \rightarrow u_r q^{a(r)},$$
and a refinement of Theorem~\ref{andrews2} by using the permutation $\sigma=(n,n-1, \dots,1)$ and doing the transformations
$$ q \rightarrow q^N, u_1 \rightarrow u_1q^{N-a(1)}, \dots u_r \rightarrow u_r q^{N-a(r)}.$$
More detail on how to recover Andrews' theorems is given in Section 2 in the case of overpartitions, which generalises the case of partitions.

\ \\

Let us now mention the extensions of Schur's theorem and its generalisations to overpartitions.
An overpartition of $n$ is a partition of $n$ in which the first occurrence of a number may be overlined.
For example, there are $14$ overpartitions of $4$:
$4$, $\overline{4}$, $3+1$, $\overline{3}+1$, $3+\overline{1}$, $\overline{3}+\overline{1}$, $2+2$, $\overline{2}+2$, $2+1+1$, $\overline{2}+1+1$, $2+\overline{1}+1$, $\overline{2}+\overline{1}+1$, $1+1+1+1$ and $\overline{1}+1+1+1$.
Though they were not called overpartitions at the time, they were already used in 1967 by Andrews~\cite{Andrews67} to give combinatorial interpretations of the $q$-binomial theorem, Heine's transformation and Lebesgue's identity. Then they were used in 1987 by Joichi and Stanton~\cite{JoichiStanton} in an algorithmic theory of bijective proofs of $q$-series identities. They also appear in bijective proofs of Ramanujan's $_1\psi_1$ summation and the $q$-Gauss summation~\cite{Corteel03,CorLov02}. It was Corteel~\cite{Corteel03} who gave them their name in 2003, just before Corteel and Lovejoy~\cite{CorLov} revealed their generality by giving combinatorial interpretations for several $q$-series identities.
They went on to become a very interesting generalisation of partitions, and several partition identities have overpartition analogues or generalisations. For example, Lovejoy proved overpartition analogues of identities of Gordon~\cite{Lovejoy03}, and Andrews-Santos and Gordon-G\"ollnitz~\cite{Lovejoy04}. Overpartitions also have interesting arithmetic properties~\cite{acko, BL2, Ma, Tre} and are related to the fields of Lie algebras~\cite{Kang}, mathematical physics~\cite{Desrosiers,Fortin1,Fortin2} and supersymmetric functions \cite{Desrosiers}.

In 2005, Lovejoy~\cite{Lovejoy} generalised Schur's theorem (in the weighted words version) to overpartitions by proving the following.

\begin{theorem}[Lovejoy]
\label{schur_over}
Let $\overline{A}(x_1,x_2;k,n)$ denote the number of overpartitions of $n$ into $x_1$ parts congruent to $1$ and $x_2$ parts congruent to $2$ modulo $3$, having $k$ non-overlined parts.
Let $\overline{B}(x_1,x_2;k,n)$ denote the number of overpartitions $\lambda_1 + \cdots + \lambda_s$ of $n$, with $x_1$ parts congruent to $0$ or $1$ modulo $3$ and $x_2$ parts congruent to $0$ or $2$ modulo $3$, having $k$ non-overlined parts and satisfying the difference conditions
\begin{equation*}
\lambda_i - \lambda_{i+1} \geq 
\begin{cases}
0 + 3 \chi(\overline{\lambda_{i+1}})\ \text{if}\ \lambda_{i+1} \equiv 1,2 \mod 3,\\
1 + 3 \chi(\overline{\lambda_{i+1}})\ \text{if}\ \lambda_{i+1} \equiv 0 \mod 3,
\end{cases}
\end{equation*}
where $\chi(\overline{\lambda_{i+1}}) =1$ if $\lambda_{i+1}$ is overlined and $0$ otherwise.
Then for all $x_1,x_2,k,n \geq 0$, $\overline{A}(x_1,x_2;k,n)=\overline{B}(x_1,x_2;k,n)$.
\end{theorem}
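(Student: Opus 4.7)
My plan is to adapt the weighted-words method of Alladi-Gordon to the overpartition setting. Introduce a variable $d$ to track non-overlined parts; then the generating function of $\overline{A}$ is
$$\sum_{x_1,x_2,k,n\geq 0} \overline{A}(x_1,x_2;k,n)\,a^{x_1} b^{x_2} d^{k} q^{n} = \prod_{n\geq 1}\frac{(1+aq^{3n-2})(1+bq^{3n-1})}{(1-daq^{3n-2})(1-dbq^{3n-1})},$$
so it suffices to evaluate the generating function of $\overline{B}$ with the same weights. Undilating via $q\to q^3$, $a\to aq^{-2}$, $b\to bq^{-1}$ (correspondingly $ab\to abq^{-3}$), the problem reduces to a non-dilated weighted-words identity: overpartitions with parts in primary colours $a$ or $b$ (overlined ones distinct, non-overlined ones repeatable) are equinumerous with certain coloured overpartitions whose parts carry colours from $\{a,b,ab\}$ and satisfy difference conditions which translate, after dilation, to those of Theorem~\ref{schur_over}.

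I would prove the non-dilated identity by setting up a system of $q$-difference equations for the generating function $G(a,b,d,q)$ of the $\overline{B}$-side. Conditioning on the smallest part---its colour, its overlining status, and, for non-overlined parts, its multiplicity---decomposes $G$ into pieces expressible in terms of $G(aq,b,d,q)$, $G(a,bq,d,q)$, and $G(aq,bq,d,q)$. A geometric series in $d$ handles the multiplicity of a non-overlined smallest part. This produces a closed system with initial condition $G(0,0,d,q)=1$ which determines $G$ uniquely as a formal power series in $q$. It then remains to verify that the corresponding infinite product satisfies the same system, which is routine once one isolates the $n=1$ factors.

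The main obstacle I anticipate is pinning down the correct weighted-words formulation. The very permissive rule $\lambda_i-\lambda_{i+1}\geq 0$ appearing in Theorem~\ref{schur_over} when $\lambda_{i+1}$ is non-overlined and $\not\equiv 0\pmod 3$ corresponds in weighted words to allowing repetitions of non-overlined parts, which breaks the strictly decreasing framework that Alladi-Gordon originally relied on. The delicate bookkeeping is to correctly handle interactions between repeatable non-overlined parts of colour $a$ or $b$ and overlined parts of colour $ab$, and to ensure the residue-dependent $+1$ in the $\lambda_{i+1}\equiv 0\pmod 3$ case emerges naturally from the dilation of the weighted-words difference conditions. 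Once the formulation is settled, the $q$-difference closure should be straightforward.
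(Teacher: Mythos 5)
Your overall framing (weighted words with an extra parameter $d$ tracking non-overlined parts, the undilation $q\to q^3$, $a\to aq^{-2}$, $b\to bq^{-1}$, and a functional-equation argument) is the right family of techniques: this is how the present paper obtains the statement, as the case $r=2$ of Theorem~\ref{refinement} and Corollary~\ref{cor}, dilated as in Section 2. But the pivotal step of your plan --- that conditioning on the smallest part closes into a system relating $G(a,b,d,q)$ to $G(aq,b,d,q)$, $G(a,bq,d,q)$ and $G(aq,bq,d,q)$ --- does not go through as stated, and the obstruction is exactly the doubly-coloured parts. After removing a smallest part of colour $ab$ (resp.\ $a$, $b$), the standard reduction subtracts the \emph{same} amount ($w(\tilde u_j)$ or $w(\tilde u_j)-1$, cf.\ Lemma~\ref{lemma1}) from every remaining part; this uniform subtraction is not realised by any substitution in $a$ and $b$ alone, since the shift $a\to aq$, $b\to bq$ adds $1$ to each $a$- or $b$-coloured part but $2$ to each $ab$-coloured part (its weight being $ab\,q^{\lambda}$). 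Conversely, the colour-dependent subtraction that those shifts would encode (subtract $w(c(\mu))$ from each part $\mu$) does not preserve the difference conditions, because the required gap depends on $w(c(\lambda_{i+1}))$, so the image is no longer a $\overline{B}$-type family with a shifted smallest part. This is precisely why the proof here introduces an auxiliary variable $x$ counting the number of parts, so that the uniform subtraction becomes $x\mapsto xq^{w}$ and one lands on the $q$-difference equation~\eqref{qdiff}.

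A second consequence is that your closing step (``verify that the corresponding infinite product satisfies the same system, which is routine'') is not available once the setting is corrected: with the part-counting variable $x$ present, the product $\prod_{k}\frac{(-u_k;q)_\infty}{(du_k;q)_\infty}$ is only the value of the generating function at $x=1$, not a solution of the equation for general $x$, and there is no closed form for $f_{0_{u_1}}(x)$ to check against. The paper has to work hardest at exactly this point: it converts~\eqref{qdiff} into recurrences on Taylor coefficients, proves the coefficient identity of Lemma~\ref{equalAA'}, and extracts $f(1)$ via Appell's comparison theorem (Theorem~\ref{main}), by induction on $r$; for Lovejoy's theorem one needs the base case $r=1$ plus one induction step. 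If you want to avoid that machinery, the workable alternative closer to Lovejoy's original treatment is the Alladi--Gordon key-identity route: compute the generating function for $\overline{B}$-type overpartitions with prescribed numbers of $a$-, $b$- and $ab$-coloured parts (minimal configurations together with Gaussian binomial coefficients, with the geometric series in $d$ you mention handling repeated non-overlined parts), and then sum it with $q$-series identities. As it stands, the claimed closed system in shifts of $a$ and $b$ alone is a genuine gap, not a bookkeeping issue.
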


Schur's theorem (in the refined version of Alladi and Gordon) corresponds to the case $k=0$ in Lovejoy's theorem.

Recently, the author generalised both of Andrews' theorems (Theorems~\ref{andrews} and~\ref{andrews2}) to overpartitions~\cite{Doussegene,Doussegene2} by proving the following (reusing the notation of Andrews' theorems).

\begin{theorem}[Dousse]
\label{dousse}
Let $\overline{D}(A_N;k,n)$ denote the number of overpartitions of $n$ into parts taken from $A_N$, having $k$ non-overlined parts. Let $\overline{E}(A'_N;k,n)$ denote the number of overpartitions of $n$ into parts taken from $A'_N$ of the form $n=\lambda_1+\cdots+ \lambda_s$, having $k$ non-overlined parts, such that
\[\lambda_i - \lambda_{i+1} \geq N \left(w_A\left(\beta_N(\lambda_{i+1})\right) -1 +\chi(\ov{\lambda_{i+1}}) \right)+v_A(\beta_N(\lambda_{i+1}))-\beta_N(\lambda_{i+1}),\]
where $\chi(\ov{\lambda_{i+1}})=1$ if $\lambda_{i+1}$ is overlined and $0$ otherwise.
Then for all $k,n \geq 0$, $\overline{D}(A_N;k,n)= \overline{E}(A'_N;k,n)$.
\end{theorem}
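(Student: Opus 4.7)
The plan is to prove Theorem~\ref{dousse} in two stages, imitating the reduction of Theorem~\ref{andrews} from Corteel and Lovejoy's Theorem~\ref{th:corlov}: first establish a weighted-words overpartition refinement in $r$ primary colours, then specialise via the dilations $q \to q^N$, $u_k \to u_k q^{a(k)}$. The weighted-words statement I would prove is the natural overpartition analogue of Theorem~\ref{th:corlov}: the number of overpartitions of $n$ into distinct non-negative parts coloured with primary colours, with $\ell_i$ parts of colour $u_i$ and $k$ non-overlined parts, equals the number of overpartitions of $n$ into parts coloured with composite colours, with the same colour statistics and $k$ non-overlined parts, subject to the difference condition
$$\lambda_i - \lambda_{i+1} \geq w(c(\lambda_{i+1})) - 1 + \chi(\overline{\lambda_{i+1}}) + \delta(c(\lambda_i), c(\lambda_{i+1})).$$
A short check on the $N = 3$ case recovers Lovejoy's Theorem~\ref{schur_over}, and setting $\chi \equiv 1$ (all parts overlined, i.e., distinct) recovers Theorem~\ref{th:corlov}. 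The product-side generating function factors immediately as $\prod_{n \geq 0} \prod_{i=1}^{r} (1 + u_i q^n)/(1 - d u_i q^n)$, with $d$ marking non-overlined parts.

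To prove the weighted-words identity, I would extract a system of $q$-difference equations for the difference-side generating function by conditioning on the colour and overlining status of the largest part, in the style of~\cite{Doussegene,Doussegene2}. The $\delta$ term couples consecutive parts through their colours, producing a coupled system indexed by the $2^r - 1$ composite colours, while the overlining adds a geometric-series factor that permits non-overlined parts to repeat. I would then iteratively telescope this system, as in~\cite{Doussegene,Doussegene2} but with substantially more indices, to reduce it to a single functional equation whose unique solution with the correct empty-overpartition initial condition is the explicit product above.

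Once the weighted-words refinement is in hand, the specialisation is routine: the dilations $q \to q^N$, $u_k \to u_k q^{a(k)}$ turn distinct parts in primary colour $u_k$ into distinct positive integers $\equiv a(k) \pmod N$ and composite-coloured parts into integers in $A'_N$ with residue $\alpha(i) \pmod N$, and a direct residue computation converts $N(w(c(\lambda_{i+1})) - 1 + \chi(\overline{\lambda_{i+1}})) + N\delta + (\alpha(c(\lambda_i)) - \alpha(c(\lambda_{i+1})))$ into $N(w_A(\beta_N(\lambda_{i+1})) - 1 + \chi(\overline{\lambda_{i+1}})) + v_A(\beta_N(\lambda_{i+1})) - \beta_N(\lambda_{i+1})$. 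Setting $u_1 = \cdots = u_r = 1$ then erases the colour-count refinement while retaining the non-overlined count $k$, yielding Theorem~\ref{dousse}. The main obstacle is the $q$-difference step: the coupled system of $2^r - 1$ equations, each carrying the extra overlining factor, is substantially larger than the two hierarchies handled in~\cite{Doussegene,Doussegene2}, and the telescoping must be arranged so that the infinite products appearing at intermediate stages combine into the claimed answer.
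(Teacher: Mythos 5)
Your plan is essentially the paper's own route: the weighted-words statement you propose is exactly Theorem~\ref{refinement}, which the paper proves by conditioning on the extremal part to get a coupled $q$-difference system indexed by the $2^r-1$ composite colours, reducing it to a single equation and showing by induction on $r$ (passing between $q$-difference equations, coefficient recurrences and Appell's comparison theorem) that its solution evaluates to $\prod_{k=1}^r \frac{(-u_k;q)_{\infty}}{(du_k;q)_{\infty}}$, followed by the dilation $q \to q^N$, $u_k \to u_k q^{a(k)}$. The one step you compress is the conversion of the dilated difference condition into the one stated in Theorem~\ref{dousse}: this is not purely a residue computation, since matching the indicator $v_A(\beta_N(\lambda_{i+1})) > \beta_N(\lambda_i)$ with $\delta_A(\beta_N(\lambda_i),\beta_N(\lambda_{i+1}))$ requires Lemma~\ref{comparison} (that $v_A(\alpha) > \beta$ iff $v_A(\alpha) > z_A(\beta)$), which is exactly where the hypothesis $\sum_{i=1}^{k-1} a(i) < a(k)$ is used.
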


\begin{theorem}[Dousse]
\label{dousse2}
Let $\overline{F}(-A_N;k,n)$ denote the number of overpartitions of $n$ into parts taken from $-A_N$, having $k$ non-overlined parts. Let $\overline{G}(-A'_N;k,n)$ denote the number of overpartitions of $n$ into parts taken from $-A'_N$ of the form $n=\lambda_1+\cdots+ \lambda_s$, having $k$ non-overlined parts, such that
$$
\lambda_i - \lambda_{i+1} \geq N \left(w_A\left(\beta_N(-\lambda_{i})\right) -1 +\chi(\ov{\lambda_{i+1}}) \right)+v_A(\beta_N(-\lambda_{i}))-\beta_N(-\lambda_{i}),
$$
and
$$
\lambda_s \geq N (w_A(\beta_N(-\lambda_s))-1).
$$
Then for all $k,n \geq 0$, $\overline{F}(-A_N;k,n)=\overline{G}(-A'_N;k,n)$.
\end{theorem}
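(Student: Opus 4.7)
The plan is to deduce Theorem~\ref{dousse2} as a specialisation of a general weighted-words theorem for coloured overpartitions, in the same way that Corteel and Lovejoy deduced Theorem~\ref{andrews2} from Theorem~\ref{th:corlov} using the permutation $\sigma=(r,r-1,\dots,1)$ and the dilations $q\to q^N$, $u_i\to u_iq^{N-a(i)}$. First I would formulate an overpartition analogue of Theorem~\ref{th:corlov}: parts are still coloured in $\tilde{u}_1,\dots,\tilde{u}_{2^r-1}$, but each part is now allowed to be overlined, an extra formal variable $d$ keeps track of the number of non-overlined parts, and the coloured difference condition is strengthened to
$$\lambda_i-\lambda_{i+1}\geq w(c(\lambda_{i+1}))+\delta(c(\lambda_i),c(\lambda_{i+1}))+w(c(\lambda_{i+1}))\chi(\overline{\lambda_{i+1}}),$$
so that overlining forces an extra gap of $w(c(\lambda_{i+1}))$. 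The corresponding ``distinct parts'' side should have the natural overpartition weighted-words generating function, with factors of the form $(1+u_iq^n)(1+du_iq^n)/((1-u_iq^n)(1-du_iq^n))$.

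To prove the general coloured identity I would follow the $q$-difference equation strategy developed in \cite{Doussegene,Doussegene2}. Let $f(u_1,\dots,u_r,d;q)$ denote the generating function of the difference-condition side. Conditioning on the colour and the overlining status of the largest part produces a system of $q$-difference equations relating $f$ to the shifts $f(\dots,qu_i,\dots)$ and $f(\dots,q^2u_i,\dots)$, the second shift reflecting the extra $w$-gap imposed by an overlined largest part. After algebraic simplification this system should collapse to a single functional equation, which one then checks is also satisfied by the candidate infinite product; together with the normalisation $f(0,\dots,0,d;q)=1$ this forces the two generating functions to coincide.

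The specialisation step then mirrors the one recovering Theorem~\ref{andrews2}: taking $\sigma=(r,r-1,\dots,1)$ and substituting $q\to q^N$, $u_i\to u_iq^{N-a(i)}$, the quantity $Nw(c(\lambda_i))$ becomes $Nw_A(\beta_N(-\lambda_i))$, the $\delta$-term combined with the dilation contributes $v_A(\beta_N(-\lambda_i))-\beta_N(-\lambda_i)$, the extra overlining gap becomes $N\chi(\overline{\lambda_{i+1}})$ times $w_A(\beta_N(-\lambda_i))$, and the non-negativity of parts in the weighted-words model becomes the stated minimum-part inequality $\lambda_s\geq N(w_A(\beta_N(-\lambda_s))-1)$. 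The main obstacle will be the $q$-difference step itself: overlining roughly doubles the number of cases to track (overlined versus non-overlined largest part), and under the reversing permutation $\sigma=(r,r-1,\dots,1)$ the difference condition couples the colours of two consecutive parts in a way that forces the recurrence to be set up by the \emph{largest} colour rather than the smallest, making the reduction of the multivariate system to a closed form noticeably more delicate than in the ordinary partition case treated by Corteel and Lovejoy.
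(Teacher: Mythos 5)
Your overall strategy (prove a general coloured weighted-words theorem for overpartitions, then specialise with the reversing permutation and the dilations $q\to q^N$, $u_i\to u_iq^{N-a(i)}$) is indeed the route the paper takes, but the general theorem you propose is not the right one, and this is a genuine gap rather than a cosmetic difference. In the correct statement (Theorem~\ref{refinement}) the difference condition is
$$\lambda_i-\lambda_{i+1}\ \geq\ w(c(\lambda_{i+1}))+\chi(\overline{\lambda_{i+1}})-1+\delta(c(\lambda_i),c(\lambda_{i+1})),$$
so an overlined successor costs an extra gap of exactly $1$ (and a non-overlined successor \emph{relaxes} the Corteel--Lovejoy gap by $1$), whereas you impose an extra gap of $w(c(\lambda_{i+1}))\chi(\overline{\lambda_{i+1}})$ on top of the full Corteel--Lovejoy condition. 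Your condition fails the basic consistency checks: setting all parts of one type should recover Theorem~\ref{th:corlov}, and after the dilation the overlining term must contribute $N\chi(\overline{\lambda_{i+1}})$, exactly as in the statement of Theorem~\ref{dousse2}; your version would instead produce $Nw_A(\beta_N(-\lambda_i))\chi(\overline{\lambda_{i+1}})$, which is not the difference condition of the theorem you are asked to prove (you even assert this product form in your specialisation step, so the mismatch is not a typo but propagates to the final statement). The product side is also wrong: the generating function for the congruence side is $\prod_{k=1}^r(-u_k;q)_\infty/(du_k;q)_\infty$, i.e.\ factors $(1+u_kq^n)/(1-du_kq^n)$, not the four-factor expression $(1+u_iq^n)(1+du_iq^n)/((1-u_iq^n)(1-du_iq^n))$, which does not count overpartitions with $d$ marking non-overlined parts.

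Beyond the wrong statement, the analytic core is underestimated. The paper's proof does not ``check that the candidate infinite product satisfies the functional equation'': the product $\prod_k(-u_k;q)_\infty/(du_k;q)_\infty$ is only the value $f_{0_{u_1}}(1)$ of the generating function with the extra variable $x$, and $f_{0_{u_1}}(x)$ itself has no such closed form for $r\geq 2$. One must first derive the order-$r$ equation \eqref{qdiff} (via Lemma~\ref{lemma1} and the inductive Lemma~\ref{conj}, conditioning on the \emph{smallest} part), and then evaluate the limit at $x=1$ by an induction on $r$ that passes back and forth between $q$-difference equations and recurrences on coefficients (Lemmas~\ref{lemmaF}--\ref{equalAA'}) and uses Appell's comparison theorem; uniqueness from the normalisation alone does not give you $f(1)$. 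Also note that the boundary condition $\lambda_s\geq N(w_A(\beta_N(-\lambda_s))-1)$ and the $\delta_A$-term require the congruence argument of Lemma~\ref{comparison} and the reformulation in Section~2 to match the stated inequalities of Theorem~\ref{dousse2}; with your difference condition that matching step cannot be carried out. To repair the proposal you must replace your general theorem by Theorem~\ref{refinement} (correct gap $w+\chi-1+\delta$ and product $\prod_k(-u_k;q)_\infty/(du_k;q)_\infty$) and supply the $q$-difference/recurrence induction in full.
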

Lovejoy's theorem corresponds to $N=3$, $r=2$, $a(1)= 1$, $a(2)=2$ in Theorems~\ref{dousse} and~\ref{dousse2}, while the case $k=0$ of Theorem~\ref{dousse} (resp. Theorem~\ref{dousse2}) gives Andrews' Theorem~\ref{andrews} (resp. Theorem~\ref{andrews2}).

While the statements of Theorems~\ref{dousse} and~\ref{dousse2} resemble those of Andrews' theorems (Theorem~\ref{andrews} and~\ref{andrews2}), the proofs are more intricate. We used $q$-difference equations and recurrences as well, but in our case we had equations of order $r$ while those of Andrews' proofs were easily reducible to equations of order $1$. Thus we needed to prove the result by induction on $r$ by going back and forth from $q$-difference equations on generating functions to recurrence equations on their coefficients.

The purpose of this paper is to generalise and refine Theorems~\ref{dousse} and~\ref{dousse2} in the same way that Theorem~\ref{th:corlov} generalises Andrews' identities and to unify all the above-mentioned generalisations of Schur's theorem. We prove the following.

\begin{theorem}
\label{refinement}
Let $\overline{D}(\ell_1, \dots, \ell_r;k,n)$ denote the number of overpartitions of $n$ into non-negative parts coloured $u_1, \dots, u_{r-1}$ or $u_r$, having $\ell_i$ parts coloured $u_i$ for all $i \in \{1, \dots,r\}$ and $k$ non-overlined parts. Let $\overline{E}(\ell_1, \dots, \ell_r;k,n)$ denote the number of overpartitions $\lambda_1+\cdots+ \lambda_s$ of $n$ into non-negative parts coloured $\tilde{u}_1, \dots, \tilde{u}_{2^r-2}$ or $\tilde{u}_{2^r-1}$, such that for all $i \in \{1, \dots,r\}$, $\ell_i$ parts have $u_i$ as one of their primary colours, having $k$ non-overlined parts and satisfying the difference conditions
$$\lambda_i - \lambda_{i+1} \geq w(c(\lambda_{i+1}))+\chi(\overline{\lambda_{i+1}})-1+\delta(c(\lambda_i),c(\lambda_{i+1})),$$
where $\chi(\overline{\lambda_{i+1}}) =1$ if $\lambda_{i+1}$ is overlined and $0$ otherwise.

Then for all $\ell_1, \dots, \ell_r, k, n \geq 0$,
$$\overline{D}(\ell_1, \dots, \ell_r;k,n)= \overline{E}(\ell_1, \dots, \ell_r;k,n).$$
\end{theorem}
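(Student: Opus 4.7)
The plan is to show that the generating function
$$\overline{E}(u_1, \dots, u_r; d; q) := \sum_{\ell_1, \dots, \ell_r, k, n \geq 0} \overline{E}(\ell_1, \dots, \ell_r; k, n)\, u_1^{\ell_1} \cdots u_r^{\ell_r} d^k q^n$$
is equal to the infinite product
$$\prod_{n \geq 0} \prod_{i=1}^r \frac{1 + u_i q^n}{1 - d u_i q^n},$$
which is manifestly the generating function for $\overline{D}(\ell_1, \dots, \ell_r; k, n)$, with $d$ marking non-overlined parts and $u_i$ marking parts whose colour contains the primary colour $u_i$.

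To compute $\overline{E}$, I would follow the weighted-words philosophy of Alladi--Gordon and refine it by introducing $2^r - 1$ auxiliary series $E_j(u_1, \dots, u_r; d; q)$, one per combined colour $\tilde{u}_j$, where $E_j$ is the generating function for those overpartitions counted by $\overline{E}$ whose largest part has colour $\tilde{u}_j$. Stripping off this largest part and using the difference condition
$$\lambda_i - \lambda_{i+1} \geq w(c(\lambda_{i+1})) + \chi(\overline{\lambda_{i+1}}) - 1 + \delta(c(\lambda_i), c(\lambda_{i+1}))$$
translates into a system of $2^r - 1$ coupled $q$-difference equations for the $E_j$: each equation expresses $E_j$ as a sum, over the possible colours $\tilde{u}_i$ of the new top part after stripping, of suitably $q$-shifted and $d$-weighted copies of the $E_i$, the $\delta$-term contributing an additional factor of $q$ precisely when $z(\tilde{u}_j) < v(\tilde{u}_i)$ and the $\chi$-term coming from the choice between an overlined and a non-overlined top part.

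The core of the proof is then to verify that the claimed product solves this system. I would proceed by induction on $r$. The base case $r = 1$ involves only the combined colour $\tilde{u}_1 = u_1$ and reduces to a standard overpartition $q$-series identity. For the inductive step, I would peel off the variable $u_r$ by isolating the series $E_j$ with $\epsilon_r(j) = 1$, convert the $q$-difference equations into recurrences on the coefficients of $u_r$, and use the inductive hypothesis (applied to the system for $r-1$ primary colours obtained by forgetting $u_r$) to close the recursion. This back-and-forth between $q$-difference equations and coefficient recurrences is exactly the technique previously developed by the author to prove Theorems~\ref{dousse} and~\ref{dousse2}, but now with the weighted-words setup it should be considerably more transparent.

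The main obstacle will be the size of the system: it has $2^r - 1$ equations rather than the $r$ equations of the earlier papers, since the comparison $\delta(c(\lambda_i), c(\lambda_{i+1}))$ genuinely depends on the full combined colour rather than on a single primary label. Organising the system so that removing $u_r$ cleanly reduces the $r$-colour problem to the $(r-1)$-colour problem is the key technical difficulty; in practice one must match, for each $j$ with $\epsilon_r(j) = 1$, the role of $E_j$ in the $r$-colour system with that of $E_{j - 2^{r-1}}$ in the $(r-1)$-colour system, after appropriate $q$- and $d$-shifts. Once Theorem~\ref{refinement} is in hand, the refined overpartition analogues of Theorems~\ref{andrews} and~\ref{andrews2} follow by the symmetry/permutation argument of Corteel and Lovejoy, applied with the substitutions $q \to q^N$ and $u_i \to u_i q^{a(i)}$ (respectively $u_i \to u_i q^{N - a(i)}$), using the identity permutation (respectively the reversal $(r, r-1, \dots, 1)$).
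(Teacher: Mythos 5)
Your overall philosophy (weighted words combined with $q$-difference equations, induction on $r$) is the same as the paper's, but as written your plan has a concrete gap at the very first step: the system of equations you describe cannot be set up in the variables $u_1,\dots,u_r,d,q$ alone. The difference conditions are exploited by removing an extreme part and subtracting a constant ($w(\tilde{u}_j)$ or $w(\tilde{u}_j)-1$) from every remaining part, and the cost of that subtraction in the exponent of $q$ is $(m-1)w(\tilde{u}_j)$, where $m$ is the number of parts. This is exactly why the paper introduces an extra variable $x$ marking the number of parts, works with the refined counts $p_{i_{\tilde{u}_j}}(\ell_1,\dots,\ell_r;k,m,n)$ conditioned on the \emph{smallest} part (which is what makes the subtract-a-constant trick map the class to itself), and derives Lemma~\ref{lemma1}. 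Conditioning instead on the colour of the largest part, with no parameter recording either the number of parts or the size of that largest part, does not produce a closed system: ``suitably $q$-shifted and $d$-weighted copies of the $E_i$'' cannot encode the requirement that the next part lies the prescribed amount below the removed one, and no substitution in the $u_i$ alone can simulate a shift by $q^{\#\text{parts}}$, since $\sum_i \ell_i$ weights each part by $w$ of its colour rather than by $1$. The omission is repairable, but only by reinstating precisely the extra variable and smallest-part conditioning you left out.

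The second, larger gap is the step ``verify that the claimed product solves this system.'' The product is the value at $x=1$ of the combined generating function, so there is nothing to verify directly; what is actually needed is a uniqueness-type statement: any solution of the consolidated functional equation~\eqref{qdiff} with value $1$ at $x=0$ takes the product value at $x=1$ (the paper's Theorem~\ref{main}). Reaching that point requires first collapsing the $2^r-1$ coupled equations of Lemma~\ref{lemma2} into the single equation~\eqref{qdiff} satisfied by $f_{0_{u_1}}$ (Lemma~\ref{conj}, a substantial $q$-binomial computation), and the induction on $r$ is then carried out not by extracting coefficients of $u_r$, as you propose, but by a chain of transformations $f\to F\to (A_n)\to (A'_n)\to (a_n)\to G\to g$ in the auxiliary variable $x$, whose pivot is the identity of two recurrences (Lemma~\ref{equalAA'}) and a passage to the limit $x\to 1^-$ via Appell's comparison theorem (needed because one cannot simply set $x=1$ in the functional equation). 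None of this machinery is indicated in your sketch, and ``close the recursion using the inductive hypothesis on the coefficients of $u_r$'' is exactly the point where the real work lies; the expectation that the weighted-words setting makes it ``considerably more transparent'' underestimates what remains to be done.
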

The proof of Theorem~\ref{refinement} relies on the combination of the method of weighted words of Alladi and Gordon~\cite{Alladi} and the $q$-difference equations techniques introduced by the author in~\cite{Doussegene}. This idea of mixing the method of weighted words with $q$-difference equations was first introduced by the author in a recent paper~\cite{Doussesilweight} to prove a refinement and companion of Siladi\'c's theorem~\cite{Siladic}, a partition identity that first arose in the study of Lie algebras.

As in the work of Corteel and Lovejoy, we can take advantage of the symmetries in Theorem~\ref{refinement}.
Let $\sigma \in S_r$ be a permutation. For every overpartition $\lambda$ counted by $\overline{E}(\ell_1, \dots, \ell_r;k,n)$, we define a new overpartition $\lambda^{\sigma}$ obtained by setting $\lambda_i^{\sigma} = \lambda_i$ and $c(\lambda_i^{\sigma})=\sigma(c(\lambda_i))$, and overlining $\lambda_i^{\sigma}$ if and only if $\lambda_i$ was overlined. This mapping is reversible and doesn't change $w(c(\lambda_{i+1}))$ or $\chi(\overline{\lambda_{i+1}})$, so the difference condition we obtain on $\lambda^{\sigma}$ is
\begin{equation}
\label{cond_diff_sigma_ov}
\lambda_i^{\sigma} - \lambda_{i+1}^{\sigma} \geq w(c(\lambda_{i+1}^{\sigma}))+\chi(\overline{\lambda_{i+1}^{\sigma}})-1+\delta(\sigma^{-1}(c(\lambda_i^{\sigma})),\sigma^{-1}(c(\lambda_{i+1}^{\sigma}))).
\end{equation}
Thus $\overline{E}(\ell_1, \dots, \ell_r;k,n) = \overline{E}^{\sigma}(\ell_{\sigma^{-1}(1)}, \dots, \ell_{\sigma^{-1}(r)};k,n)$, where $\overline{E}^{\sigma}(\ell_1, \dots, \ell_r;k,n)$ denotes the number of overpartitions of $n$ into non-negative parts, each part being coloured in one of the colours $\tilde{u}_1, \dots, \tilde{u}_{2^r-1}$, such that for all  $i \in \{1, \dots,r\}$, $\ell_i$ parts have $u_i$ as one of their primary colours, satisfying the difference condition~\eqref{cond_diff_sigma_ov}.

Moreover, by doing the same transformation on the overpartitions counted by $D(\ell_1, \dots, \ell_r;n),$ one can see that
$$\overline{D}(\ell_1, \dots, \ell_r;k,n)=\overline{D}(\ell_{\sigma^{-1}(1)}, \dots, \ell_{\sigma^{-1}(r)};k,n).$$

Thus one has
$$\overline{D}(\ell_{\sigma^{-1}(1)}, \dots, \ell_{\sigma^{-1}(r)};k,n)=\overline{E}^{\sigma}(\ell_{\sigma^{-1}(1)}, \dots, \ell_{\sigma^{-1}(r)};k,n),$$
and relabelling the colours gives

\begin{corollary}
\label{cor}
For every permutation $\sigma\in S_r$,
$$\overline{D}(\ell_{1}, \dots, \ell_{r};k,n)=\overline{E}^{\sigma}(\ell_{1}, \dots, \ell_{r};k,n).$$
\end{corollary}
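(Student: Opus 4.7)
The plan is to derive the corollary from Theorem \ref{refinement} via a bijective colour-permutation argument, precisely mirroring how Corteel and Lovejoy obtain their corollary from Theorem \ref{th:corlov}. Since the text preceding the corollary already constructs the essential map $\lambda \mapsto \lambda^{\sigma}$, the proof amounts to assembling those observations and relabelling.

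First I would define the map $\lambda \mapsto \lambda^{\sigma}$ uniformly on overpartitions of both types: keep every part's integer value and overline status, and replace each primary colour $u_i$ appearing in $c(\lambda_j)$ by $u_{\sigma(i)}$. This is a bijection whose inverse is induced by $\sigma^{-1}$, since $\sigma$ acts on primary colours by permutation and the colours $\tilde{u}_i$ are determined by the set of primary colours they contain. Next I would verify invariance of statistics. The integer weight $n$ and the number $k$ of non-overlined parts are manifestly preserved, and a part whose colour involves $u_i$ is sent to one whose colour involves $u_{\sigma(i)}$, so the vector $(\ell_1,\dots,\ell_r)$ is sent to $(\ell_{\sigma^{-1}(1)},\dots,\ell_{\sigma^{-1}(r)})$. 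On the $\overline{E}$-side, $w(c(\lambda_{i+1}))$ depends only on the number of primary colours used and $\chi(\overline{\lambda_{i+1}})$ depends only on the overline status, so both are preserved; only $\delta(c(\lambda_i),c(\lambda_{i+1}))$, which is computed from the natural ordering of the $u_k$, transforms, and becomes $\delta(\sigma^{-1}(c(\lambda_i^{\sigma})),\sigma^{-1}(c(\lambda_{i+1}^{\sigma})))$, which is exactly the condition \eqref{cond_diff_sigma_ov} defining $\overline{E}^{\sigma}$.

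Consequently, the bijection yields
\[\overline{D}(\ell_1,\dots,\ell_r;k,n) = \overline{D}(\ell_{\sigma^{-1}(1)},\dots,\ell_{\sigma^{-1}(r)};k,n)\]
and
\[\overline{E}(\ell_1,\dots,\ell_r;k,n) = \overline{E}^{\sigma}(\ell_{\sigma^{-1}(1)},\dots,\ell_{\sigma^{-1}(r)};k,n).\]
Combining these two equalities with Theorem \ref{refinement} gives
\[\overline{D}(\ell_{\sigma^{-1}(1)},\dots,\ell_{\sigma^{-1}(r)};k,n)=\overline{E}^{\sigma}(\ell_{\sigma^{-1}(1)},\dots,\ell_{\sigma^{-1}(r)};k,n),\]
and since this holds for every $\sigma \in S_r$, I would substitute $\sigma^{-1}$ for $\sigma$ and relabel $(\ell_1,\dots,\ell_r)\leftrightarrow(\ell_{\sigma(1)},\dots,\ell_{\sigma(r)})$ to obtain the stated form of the corollary.

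The argument is entirely formal once Theorem \ref{refinement} is granted; there is no real obstacle. The only step that requires some care is the bookkeeping: distinguishing $\sigma$ from $\sigma^{-1}$ when tracking which $\ell_i$ ends up in which slot, and checking that $\delta$ is indeed the only statistic sensitive to the identity of the colours rather than to their number, overline status, or to the abstract partition of parts among the $u_i$.
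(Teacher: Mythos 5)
Your proposal is correct and follows essentially the same route as the paper: the colour-permutation bijection $\lambda\mapsto\lambda^{\sigma}$ preserving values, overlines, $w$ and $\chi$, the resulting identities $\overline{E}(\ell_1,\dots,\ell_r;k,n)=\overline{E}^{\sigma}(\ell_{\sigma^{-1}(1)},\dots,\ell_{\sigma^{-1}(r)};k,n)$ and $\overline{D}(\ell_1,\dots,\ell_r;k,n)=\overline{D}(\ell_{\sigma^{-1}(1)},\dots,\ell_{\sigma^{-1}(r)};k,n)$, and the combination with Theorem~\ref{refinement} followed by relabelling are exactly the paper's argument, with the $\sigma$ versus $\sigma^{-1}$ bookkeeping handled correctly.
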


We introduce one more notation. For $\alpha, \beta \in A'$, let 
$$\delta_A(\alpha, \beta) :=  \begin{cases}
1 \text{ if } z_A(\alpha) < v_A(\beta)\\
0 \text{ otherwise.}
\end{cases}$$
We also extend the permutations to every integer $\alpha = a(i_1) + \cdots + a(i_s) \in A'$ by setting
$$\sigma(\alpha)= a(\sigma(i_1))+ \cdots + a(\sigma(i_s)).$$

By doing the transformations
$$ q \rightarrow q^N, u_1 \rightarrow u_1q^{a(1)}, \dots u_r \rightarrow u_r q^{a(r)},$$
we obtain the following generalisation and refinement of Theorem~\ref{dousse}. Details are given in Section 2.

\begin{theorem}
\label{refdousse}
Let $\overline{D}(A_N;\ell_1, \dots, \ell_r;k,n)$ denote the number of overpartitions of $n$ into parts taken from $A_N$, having $k$ non-overlined parts, such that for all $i \in \{1, \dots, r\}$, $\ell_i$ parts are congruent to $a(i)$ modulo $N$. Let $\overline{E}^{\sigma}(A'_N;\ell_1, \dots, \ell_r;k,n)$ denote the number of overpartitions of $n$ into parts taken from $A'_N$ of the form $n=\lambda_1+\cdots+ \lambda_s$, having $k$ non-overlined parts, such that for all $i \in \{1, \dots, r\}$, $\ell_i$ is the number of parts $\lambda_j$ such that $\beta_N(\lambda_j)$ uses $a(i)$ in its defining sum, and satisfying the difference conditions
\begin{align*}
\lambda_i - \lambda_{i+1} \geq &N \left(w_A\left(\beta_N(\lambda_{i+1})\right) -1 +\chi(\ov{\lambda_{i+1}}) + \delta_A(\sigma(\beta_N(\lambda_i)),\sigma(\beta_N(\lambda_{i+1}))) \right)\\
&+\beta_N(\lambda_{i})-\beta_N(\lambda_{i+1}),
\end{align*}
where $\chi(\ov{\lambda_{i+1}})=1$ if $\lambda_{i+1}$ is overlined and $0$ otherwise.

Then for all $\ell_1, \dots, \ell_r,k,n \geq 0$, $\sigma \in S_r$,
$$\overline{D}(A_N;\ell_1, \dots, \ell_r;k,n) = \overline{E}^{\sigma}(A'_N;\ell_1, \dots, \ell_r;k,n).$$
\end{theorem}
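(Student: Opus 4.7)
The plan is to derive Theorem \ref{refdousse} from Corollary \ref{cor} by performing the dilation
$$q \to q^N, \qquad u_i \to u_i q^{a(i)} \quad (1 \le i \le r),$$
on the multivariate generating function identity underlying that corollary. Because Corollary \ref{cor} holds for every $\sigma \in S_r$, I first apply it with $\sigma^{-1}$ in place of $\sigma$; this is the bookkeeping step that ensures $\delta_A$ in the final statement appears with $\sigma$ and not with its inverse.

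On the $\overline{D}$ side of Corollary \ref{cor}, a part of value $m \ge 0$ with primary colour $u_i$ is tracked by $u_i q^m$, which becomes $u_i q^{Nm+a(i)}$. The set $\{Nm + a(i) : m \ge 0\}$ is exactly the set of positive integers congruent to $a(i)$ modulo $N$, so coloured overpartitions with $\ell_i$ parts of colour $u_i$ correspond bijectively to overpartitions into parts from $A_N$ with $\ell_i$ parts in the residue class of $a(i)$. The overline data and the parameter $k$ transfer verbatim, yielding $\overline{D}(A_N;\ell_1,\dots,\ell_r;k,n)$.

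On the $\overline{E}^{\sigma^{-1}}$ side, a part with compound colour $\tilde{u}_j$ and value $m$ becomes $\mu = Nm + \alpha(j)$; the residue $\beta_N(\mu) = \alpha(j)$ recovers the compound colour (the condition $N \ge \alpha(2^r-1)$ guarantees that distinct $\alpha(j)$ give distinct residues). The condition that $u_i$ is one of the primary colours of a given part translates into $a(i)$ appearing in the defining sum of $\beta_N(\mu)$. Moreover $w(c(\lambda)) = w_A(\beta_N(\mu))$, and since the primary-colour ordering $u_1 < \cdots < u_r$ mirrors $a(1) < \cdots < a(r)$ (a consequence of the Andrews condition $\sum_{i<k} a(i) < a(k)$), and $\sigma$ acts on compound colours by permuting primary colours exactly as it acts on elements of $A'$ by permuting the $a$-indices of their defining sums, one checks
$$\delta\bigl(\sigma^{-1}(c(\lambda_i)),\, \sigma^{-1}(c(\lambda_{i+1}))\bigr) = \delta_A\bigl(\sigma^{-1}(\beta_N(\mu_i)),\, \sigma^{-1}(\beta_N(\mu_{i+1}))\bigr).$$
Multiplying the $\overline{E}^{\sigma^{-1}}$ difference inequality by $N$ and adding $\beta_N(\mu_i) - \beta_N(\mu_{i+1})$ produces the inequality of Theorem \ref{refdousse}, with $\sigma$ in place of $\sigma^{-1}$ after the relabelling.

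The only genuine obstacle is keeping the orientation of $\sigma$ versus $\sigma^{-1}$ straight throughout the dilation; the cleanest route, as above, is to start from Corollary \ref{cor} applied at $\sigma^{-1}$ so that all inverses cancel. Everything else is a routine translation between the combinatorial data of coloured parts and residue classes modulo $N$, with no additional combinatorial content beyond what is already packaged in Theorem \ref{refinement}.
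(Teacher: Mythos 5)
Your proposal is correct and follows essentially the same route as the paper: apply Corollary~\ref{cor} with $\sigma^{-1}$, dilate each part $\lambda$ of colour $\tilde{u}_j$ to $N\lambda+\alpha(j)$ (i.e.\ $q\to q^N$, $u_i\to u_iq^{a(i)}$), and translate the colour data and difference conditions via $w=w_A$, $\delta=\delta_A$ and $\alpha(c(\lambda))=\beta_N(\lambda^{dil})$. The only blemish is notational: the displayed $\delta$-compatibility should be stated with $\sigma$ (since the $\overline{E}^{\sigma^{-1}}$ condition carries $\delta(\sigma(\cdot),\sigma(\cdot))$), but as the identity holds for any permutation acting simultaneously on colours and on elements of $A'$, this does not affect the argument.
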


Similarly, by using the transformations 
$$ q \rightarrow q^N, u_1 \rightarrow u_1q^{N-a(1)}, \dots u_r \rightarrow u_r q^{N-a(r)},$$
we obtain a refinement and generalisation of Theorem~\ref{dousse2} 

\begin{theorem}
\label{refdousse2}
Let $\overline{F}(-A_N;\ell_1, \dots, \ell_r;k,n)$ denote the number of overpartitions of $n$ into parts taken from $-A_N$, having $k$ non-overlined parts, such that for all $i \in \{1, \dots, r\}$, $\ell_i$ parts are congruent to $-a(i)$ modulo $N$. Let $\overline{G}^{\sigma}(-A'_N;\ell_1, \dots, \ell_r;k,n)$ denote the number of overpartitions of $n$ into parts taken from $-A'_N$ of the form $n=\lambda_1+\cdots+ \lambda_s$, having $k$ non-overlined parts, such that for all $i \in \{1, \dots, r\}$, $\ell_i$ is the number of parts $\lambda_j$ such that $\beta_N(-\lambda_j)$ uses $a(i)$ in its defining sum, and satisfying the difference conditions
\begin{align*}
\lambda_i - \lambda_{i+1} \geq &N \left(w_A\left(\beta_N(-\lambda_{i})\right) -1 +\chi(\ov{\lambda_{i+1}}) + \delta_A(\sigma(\beta_N(-\lambda_i)),\sigma(\beta_N(-\lambda_{i+1}))) \right)\\
&+\beta_N(-\lambda_{i+1})-\beta_N(-\lambda_{i}),
\end{align*}
and
$$
\lambda_s \geq N w_A(\beta_N(-\lambda_s))- \beta_N(-\lambda_s).
$$

Then for all $\ell_1, \dots, \ell_r,k,n \geq 0$, $\sigma \in S_r$,
$$\overline{F}(-A_N;\ell_1, \dots, \ell_r;k,n) = \overline{G}^{\sigma}(-A'_N;\ell_1, \dots, \ell_r;k,n).$$
\end{theorem}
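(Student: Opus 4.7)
The plan is to derive Theorem~\ref{refdousse2} from Corollary~\ref{cor} by applying the dilation
$$q \longmapsto q^N, \qquad u_i \longmapsto u_i q^{N-a(i)} \quad (1 \le i \le r),$$
exactly as Theorem~\ref{refdousse} is derived in Section~2, only with $N-a(i)$ replacing $a(i)$. I would first describe the combinatorial effect of the dilation, then translate the difference and minimality conditions, and finally reconcile the permutation conventions.

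\textbf{Effect on parts.} A non-negative part $m$ coloured $u_i$ is sent to $N(m+1)-a(i)$, a positive integer in $-A_N$ congruent to $-a(i)\pmod N$. A part $m$ coloured $\tilde u_j$, whose set of primary colours is the subset $S\subset\{1,\dots,r\}$ with $|S|=w(\tilde u_j)$ and $\sum_{k\in S}a(k)=\alpha(\tilde u_j)$, is sent to
$$\mu \;=\; N\bigl(m+w(\tilde u_j)\bigr) - \alpha(\tilde u_j) \;\in\; -A'_N,$$
so that $\beta_N(-\mu)=\alpha(\tilde u_j)$ and $w_A(\beta_N(-\mu))=w(\tilde u_j)$; moreover, for any $\sigma\in S_r$, $v_A(\sigma(\beta_N(-\mu)))$ and $z_A(\sigma(\beta_N(-\mu)))$ correspond respectively to $v(\sigma(\tilde u_j))$ and $z(\sigma(\tilde u_j))$. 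Overlining is preserved, and ``$a(i)$ appears in the defining sum of $\beta_N(-\mu)$'' is equivalent to ``$u_i$ is a primary colour of $\tilde u_j$'', so the $\ell_i$'s and $k$ transfer correctly.

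\textbf{Translation of the inequalities.} I would apply Corollary~\ref{cor} with the permutation $\sigma^{-1}$, so that $\sigma$ itself appears in the $\delta$ term of \eqref{cond_diff_sigma_ov}. Multiplying the resulting inequality on the $\lambda_i$'s by $N$ and using the identity
$$\mu_i-\mu_{i+1} \;=\; N(\lambda_i-\lambda_{i+1}) + N\bigl(w_A(\beta_N(-\mu_i))-w_A(\beta_N(-\mu_{i+1}))\bigr) - \bigl(\beta_N(-\mu_i)-\beta_N(-\mu_{i+1})\bigr),$$
together with the identity $\delta(\sigma(c),\sigma(c'))=\delta_A(\sigma(\beta_N(-\mu)),\sigma(\beta_N(-\mu')))$ coming from the previous paragraph, yields exactly the difference condition of Theorem~\ref{refdousse2}. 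The condition $\lambda_s\ge 0$ translates to $\mu_s\ge N w_A(\beta_N(-\mu_s))-\beta_N(-\mu_s)$. Conversely, starting from an overpartition counted by $\overline{G}^{\sigma}$, the formula $m_i = (\mu_i+\beta_N(-\mu_i))/N - w_A(\beta_N(-\mu_i))$ recovers a valid non-negative preimage: non-negativity at the smallest part is the minimality condition, and propagates upward by the difference conditions since $w_A\ge 1$.

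\textbf{Main obstacle.} The translation is essentially mechanical, so the delicate point is the bookkeeping around the symmetry. Corollary~\ref{cor} phrases $\delta$ in terms of $\sigma^{-1}$ on the colours, whereas Theorem~\ref{refdousse2} phrases $\delta_A$ in terms of the extended permutation $\sigma$ acting on the residues $\beta_N(-\lambda_j)$. One therefore applies the corollary with the permutation $\sigma^{-1}$ and checks that the ordering of primary colours under $v$ and $z$ is compatible with $v_A$ and $z_A$ on the corresponding sums under $\sigma$ (which is immediate from the fact that $a(1)<\cdots<a(r)$ matches $u_1<\cdots<u_r$). Once this correspondence is verified, Theorem~\ref{refdousse2} follows from Corollary~\ref{cor} with no additional $q$-series input.
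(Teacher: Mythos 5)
Your proposal is correct and follows essentially the same route as the paper: apply Corollary~\ref{cor} with the permutation $\sigma^{-1}$, dilate via $q \mapsto q^N$, $u_i \mapsto u_i q^{N-a(i)}$ (i.e.\ send a part $m$ of colour $\tilde u_j$ to $N(m+w(\tilde u_j))-\alpha(j)$), translate the difference conditions using $\beta_N(-\mu_i)=\alpha(c(\lambda_i))$, and observe that non-negativity of the original parts becomes the minimality condition on $\lambda_s$. Your additional checks (reversibility of the dilation and compatibility of $v,z$ with $v_A,z_A$ under $\sigma$) are consistent with, and slightly more explicit than, the paper's argument.
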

Setting $k=0$ in Theorems~\ref{refdousse} and~\ref{refdousse2} recovers two theorems of Corteel and Lovejoy~\cite{Corteel}.

Theorem~\ref{dousse} corresponds to the case $\sigma= Id$ in Theorem~\ref{refdousse} and Theorem~\ref{dousse2} to the case $\sigma=(n,n-1, \dots,1)$ in Theorem~\ref{refdousse2}. Details on how to recover Theorems~\ref{refdousse} and~\ref{refdousse2} are also given in Section 2.

Theorem~\ref{refdousse} (resp. \ref{refdousse2}) gives $r! -1$ new companions to Theorem~\ref{dousse} (resp. \ref{dousse2}). For $r \geq 3,$ the companions of Theorem~\ref{dousse} are different from those of Theorem~\ref{dousse2}. For $r=2$, the two companions are the same when $a(1) =N-a(2)$.
For example, when $r=2, a(1)=1, a(2)=2$, setting $\sigma=(2,1)$ in Theorem~\ref{refdousse} gives the following theorem.

\begin{corollary}
\label{gene1N}
Let $\overline{D}(N,\ell_1,\ell_2;k,n)$ denote the number of overpartitions of $n$ into parts $\equiv 1,2 \mod N$ with $\ell_1$ parts $\equiv 1 \mod N$ and $\ell_2$ parts $\equiv 2 \mod N$ and having $k$ non-overlined parts.

Let $\overline{E}(N,\ell_1,\ell_2;k,n)$ denote the number of overpartitions $\lambda_1 + \cdots + \lambda_s$ of $n$ into parts $\equiv 1,2,3 \mod N$ with $\ell_1$ parts $\equiv 1,3 \mod N$ and $\ell_2$ parts $\equiv 2,3 \mod N$, having $k$ non-overlined parts, such that the entry $(x,y)$ in the matrix $M_N$ gives the minimal difference between $\lambda_i \equiv x \mod N$ and $\lambda_{i+1} \equiv y \mod N$:
$$
M_N=\bordermatrix{\text{} & 1&2&3 \cr 1 & N\chi(\overline{\lambda_{i+1}}) & N\chi(\overline{\lambda_{i+1}})-1 & N \left(\chi(\overline{\lambda_{i+1}})+1\right)-2 \cr 2 &N\left(\chi(\overline{\lambda_{i+1}})+1\right)+1 & N\chi(\overline{\lambda_{i+1}}) & N\left(\chi(\overline{\lambda_{i+1}})+1\right)-1 \cr 3 &N\chi(\overline{\lambda_{i+1}})+2 & N\chi(\overline{\lambda_{i+1}})+1 & N\left(\chi(\overline{\lambda_{i+1}})+1\right) }.
$$

Then $\overline{D}(N,\ell_1,\ell_2;k,n)=\overline{E}(N,\ell_1,\ell_2;k,n).$
\end{corollary}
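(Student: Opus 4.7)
The plan is to deduce Corollary~\ref{gene1N} by direct specialisation of Theorem~\ref{refdousse} with $r=2$, $a(1)=1$, $a(2)=2$ and the transposition $\sigma=(2,1)$, and then to translate the resulting difference condition into the nine entries of the matrix $M_N$. No new combinatorial machinery is needed: the work is entirely in unfolding the definitions and checking the arithmetic in each residue-class case.

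First I would record the basic data associated to $A=\{1,2\}$, for which $A'=\{1,2,3\}$. A direct calculation gives $w_A(1)=w_A(2)=1$, $w_A(3)=2$, together with $v_A(1)=z_A(1)=1$, $v_A(2)=z_A(2)=2$, $v_A(3)=1$ and $z_A(3)=2$. The extension of $\sigma=(2,1)$ to $A'$ sends $1\mapsto 2$, $2\mapsto 1$ and $3\mapsto 3$. Next I would evaluate $\delta_A(\sigma(x),\sigma(y))$ for each of the nine pairs $(x,y)\in\{1,2,3\}^2$ using the rule that $\delta_A(\alpha,\beta)=1$ iff $z_A(\alpha)<v_A(\beta)$. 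A short case check shows this equals $1$ only when $(x,y)=(2,1)$ and is $0$ otherwise; this is precisely what accounts for the single extra summand $+N$ appearing in the $(2,1)$-entry of $M_N$.

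Substituting these values into the inequality of Theorem~\ref{refdousse},
\begin{align*}
\lambda_i-\lambda_{i+1}\geq{}&N\bigl(w_A(\beta_N(\lambda_{i+1}))-1+\chi(\overline{\lambda_{i+1}})+\delta_A(\sigma(\beta_N(\lambda_i)),\sigma(\beta_N(\lambda_{i+1})))\bigr)\\
&+\beta_N(\lambda_i)-\beta_N(\lambda_{i+1}),
\end{align*}
and running through the three residue classes modulo $N$ for $\lambda_i$ and for $\lambda_{i+1}$ reproduces exactly the nine entries of $M_N$. On the counting side, the condition ``$\ell_i$ parts have $\beta_N(\lambda_j)$ using $a(i)$ in its defining sum'' appearing in Theorem~\ref{refdousse} translates, since $a(1)=1$, $a(2)=2$ and $3=1+2$, to $\ell_1$ parts congruent to $1$ or $3$ and $\ell_2$ parts congruent to $2$ or $3$ modulo $N$, which matches the statement of the corollary.

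Because the deduction is a pure specialisation of an already-established theorem, there is no genuine obstacle: the only thing that can go wrong is a slip of arithmetic in one of the nine bound computations, so I would carry out those evaluations carefully and tabulate them against $M_N$ before concluding.
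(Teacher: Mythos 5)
Your proposal is correct and follows exactly the paper's route: the corollary is obtained by specialising Theorem~\ref{refdousse} to $r=2$, $a(1)=1$, $a(2)=2$, $\sigma=(2,1)$, and your case check (with $\delta_A(\sigma(x),\sigma(y))=1$ only for $(x,y)=(2,1)$, explaining the extra $N$ in that entry) reproduces all nine entries of $M_N$ as well as the colour/congruence bookkeeping.
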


On the other hand, setting $\sigma=Id$ in Theorem~\ref{refdousse2} gives the following.
\begin{corollary}
\label{gene2N}
Let $\overline{F}(N,\ell_1,\ell_2;k,n)$ denote the number of overpartitions of $n$ into parts $\equiv -1,-2 \mod N$ with $\ell_1$ parts $\equiv -1 \mod N$ and $\ell_2$ parts $\equiv -2 \mod N$ and having $k$ non-overlined parts.

Let $\overline{G}(N,\ell_1,\ell_2;k,n)$ denote the number of overpartitions $\lambda_1 + \cdots + \lambda_s$ of $n$ into parts $\equiv -1,-2,-3 \mod N$ with $\ell_1$ parts $\equiv -1,-3 \mod N$ and $\ell_2$ parts $\equiv -2,-3 \mod N$, having $k$ non-overlined parts, such that the entry $(x,y)$ in the matrix $M'_N$ gives the minimal difference between $\lambda_i \equiv x \mod N$ and $\lambda_{i+1} \equiv y \mod N$:
$$
M'_N=\bordermatrix{\text{} & -1&-2&-3 \cr -1 & N\chi(\overline{\lambda_{i+1}}) & N\left(\chi(\overline{\lambda_{i+1}})+1\right)+1 & N\chi(\overline{\lambda_{i+1}})+2 \cr -2 &N\chi(\overline{\lambda_{i+1}})-1 & N\chi(\overline{\lambda_{i+1}}) & N\chi(\overline{\lambda_{i+1}})+1 \cr -3 &N\left(\chi(\overline{\lambda_{i+1}})+1\right)-2 & N\left(\chi(\overline{\lambda_{i+1}})+1\right)-1 & N\left(\chi(\overline{\lambda_{i+1}})+1\right) }.
$$

Then $\overline{F}(N,\ell_1,\ell_2;k,n)=\overline{G}(N,\ell_1,\ell_2;k,n).$
\end{corollary}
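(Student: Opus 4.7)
The plan is to deduce Corollary~\ref{gene2N} as a direct specialization of Theorem~\ref{refdousse2} with $r=2$, $a(1)=1$, $a(2)=2$, and $\sigma=\mathrm{Id}$. Under this choice, $A=\{1,2\}$ and $A'=\{1,2,3\}$ with $\alpha(3)=1+2$, so the derived data simplify to $w_A(1)=w_A(2)=1$, $w_A(3)=2$, $v_A(1)=z_A(1)=1$, $v_A(2)=z_A(2)=2$, $v_A(3)=1$ and $z_A(3)=2$. Then $-A_N$ is the set of positive integers $\equiv -1,-2\pmod N$ and $-A'_N$ the set of integers $\equiv -1,-2,-3\pmod N$, so $\overline{F}(-A_N;\ell_1,\ell_2;k,n)=\overline{F}(N,\ell_1,\ell_2;k,n)$ and $\overline{G}^{\mathrm{Id}}(-A'_N;\ell_1,\ell_2;k,n)=\overline{G}(N,\ell_1,\ell_2;k,n)$. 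The bookkeeping by $\ell_1$ and $\ell_2$ lines up correctly because $\beta_N(-\lambda_j)$ uses $a(1)=1$ in its defining sum precisely when $\beta_N(-\lambda_j)\in\{1,3\}$, i.e.\ when $\lambda_j\equiv -1$ or $-3\pmod N$, and analogously for $a(2)=2$.

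The bulk of the verification is to check that the difference inequality of Theorem~\ref{refdousse2} reproduces the nine entries of $M'_N$. Since $\sigma=\mathrm{Id}$, for each pair $(x,y)\in\{-1,-2,-3\}^2$ with $\lambda_i\equiv x$, $\lambda_{i+1}\equiv y\pmod N$, I evaluate
$$N\bigl(w_A(\beta_N(-\lambda_i))-1+\chi(\ov{\lambda_{i+1}})+\delta_A(\beta_N(-\lambda_i),\beta_N(-\lambda_{i+1}))\bigr)+\beta_N(-\lambda_{i+1})-\beta_N(-\lambda_i).$$
Using the table of $(v_A,z_A)$ values above, the indicator $\delta_A$ vanishes in eight of the nine cases and equals $1$ only when $(\alpha,\beta)=(1,2)$, i.e.\ $(x,y)=(-1,-2)$, since $z_A(1)=1<2=v_A(2)$. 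Plugging in the remaining values of $w_A$ and of the residues $\beta_N(-\lambda_i)$, $\beta_N(-\lambda_{i+1})$, one recovers, writing $\chi=\chi(\ov{\lambda_{i+1}})$, the full row $(N\chi,\,N(\chi+1)+1,\,N\chi+2)$ for $x=-1$, the row $(N\chi-1,\,N\chi,\,N\chi+1)$ for $x=-2$, and the row $(N(\chi+1)-2,\,N(\chi+1)-1,\,N(\chi+1))$ for $x=-3$, matching $M'_N$ entry by entry.

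The only genuine subtlety is the last-part bound $\lambda_s\geq N w_A(\beta_N(-\lambda_s))-\beta_N(-\lambda_s)$ of Theorem~\ref{refdousse2}, which specializes to $\lambda_s\geq N-1$, $N-2$, or $2N-3$ according to whether $\lambda_s\equiv -1,-2$ or $-3\pmod N$. The first two cases are automatic for positive parts, while the third excludes the single value $\lambda_s=N-3$; I would treat this as an implicit minimality condition in the statement of Corollary~\ref{gene2N}. With this understood, the corollary follows immediately from Theorem~\ref{refdousse2}. The main obstacle, such as it is, is bookkeeping: nothing nontrivial beyond Theorem~\ref{refdousse2} is required, only a careful case-check that the nine values produced by the general formula coincide with the entries tabulated in $M'_N$.
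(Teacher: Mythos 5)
Your proposal is correct and takes essentially the same route as the paper: Corollary~\ref{gene2N} is obtained there exactly by specialising Theorem~\ref{refdousse2} to $r=2$, $a(1)=1$, $a(2)=2$, $\sigma=\mathrm{Id}$, and your nine-case verification of the matrix $M'_N$ (with $\delta_A$ nonzero only for the pair $(-1,-2)$) matches what that specialisation requires. Your remark about the last-part bound, which specialises to $\lambda_s\geq 2N-3$ for $\lambda_s\equiv -3\pmod N$ and is automatic only when $N=3$, is a legitimate subtlety that the corollary's statement leaves implicit, and treating it as an implicit minimality condition is the right reading.
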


When $N=3$, Corollaries~\ref{gene1N} and~\ref{gene2N} become the same companion to Lovejoy's theorem, where the difference conditions can be summarised as follows.
\begin{corollary}
\label{compschur}
Let $\overline{A}(\ell_1,\ell_2;k,n)$ denote the number of overpartitions of $n$ into parts $\equiv 1,2 \mod 3$ with $\ell_1$ parts $\equiv 1 \mod 3$ and $\ell_2$ parts $\equiv 2 \mod 3$ and having $k$ non-overlined parts.

Let $\overline{C}(\ell_1,\ell_2;k,n)$ denote the number of overpartitions $\lambda_1 + \cdots + \lambda_s$ of $n$ with $\ell_1$ parts $\equiv 1,3 \mod 3$ and $\ell_2$ parts $\equiv 2,3 \mod 3$, having $k$ non-overlined parts, satisfying the difference conditions
$$\lambda_i - \lambda_{i+1} \geq \begin{cases}
3 \chi(\overline{\lambda_{i+1}}) &\text{ if } \lambda_{i+1} \equiv 1 \mod 3 \text{ and } \lambda_i \equiv 1,3 \mod 3,\\
3\chi(\overline{\lambda_{i+1}})+4 &\text{ if } \lambda_{i+1} \equiv 1 \mod 3 \text{ and } \lambda_i \equiv 2 \mod 3,\\
3 \chi(\overline{\lambda_{i+1}})-1 &\text{ if } \lambda_{i+1} \equiv 2 \mod 3,\\
3 \chi(\overline{\lambda_{i+1}})+1 &\text{ if } \lambda_{i+1} \equiv 2 \mod 3.
\end{cases}
$$
Then $\overline{A}(\ell_1,\ell_2;k,n)=\overline{C}(\ell_1,\ell_2;k,n).$
\end{corollary}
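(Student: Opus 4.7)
The plan is to deduce Corollary~\ref{compschur} as the special case $N=3$ of either Corollary~\ref{gene1N} or Corollary~\ref{gene2N}, and to verify that these two specialisations indeed coincide, so that the resulting difference conditions can be merged into a single matrix of bounds. Since Corollaries~\ref{gene1N} and~\ref{gene2N} have already been established (via Theorems~\ref{refdousse} and~\ref{refdousse2}), no new combinatorial machinery is required here; the entire content of the proof is the verification of a finite list of numerical identities modulo $3$.

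First, I would observe that for $N=3$ the residue classes match up: $-1 \equiv 2$, $-2 \equiv 1$, $-3 \equiv 0 \pmod 3$. Thus the set of admissible parts in $\overline{F}(3,\ell_1,\ell_2;k,n)$ (parts $\equiv -1,-2 \pmod 3$) is exactly the set of admissible parts in $\overline{D}(3,\ell_1,\ell_2;k,n)$ (parts $\equiv 1,2 \pmod 3$), and similarly for the two enumerations with an extra residue $3$; note also that the $\ell_1$-indexed class $\equiv -1$ in $\overline{F}$ corresponds to $\ell_2$-indexed class $\equiv 2$ in $\overline{D}$. Relabelling the colours (the same relabelling used in the derivation of Corollary~\ref{cor}) identifies the objects counted by $\overline{D}(3,\ell_1,\ell_2;k,n)$ and $\overline{F}(3,\ell_2,\ell_1;k,n)$, and identically for $\overline{E}$ and $\overline{G}$.

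Next I would compare, entry by entry, the $3\times 3$ matrices $M_3$ and $M'_3$ obtained by setting $N=3$ in Corollaries~\ref{gene1N} and~\ref{gene2N}. Writing every $3(\chi+1)+c$ as $3\chi+3+c$, one sees that after the residue correspondence $1\leftrightarrow -2$, $2\leftrightarrow -1$, $3\leftrightarrow -3$ above, the entry of $M_3$ in row $x$ and column $y$ equals the entry of $M'_3$ in the row and column indexed by the corresponding negative residues. Hence the two corollaries impose the same lower bound on $\lambda_i-\lambda_{i+1}$ for every pair of residues of $\lambda_i,\lambda_{i+1}\pmod 3$, and they agree on the terminal condition $\lambda_s\geq 0$ (which is vacuous here since the parts are non-negative and, for parts in $-A'_N$ with $N=3$, $w_A(\beta_N(-\lambda_s))\leq 2$ forces $\lambda_s\geq N\cdot 0=0$).

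Finally I would read off the four-case summary in Corollary~\ref{compschur} by grouping the nine entries of the common matrix according to the residue of $\lambda_{i+1}$, replacing $\lambda_i\equiv 3\pmod 3$ by $\lambda_i\equiv 0\pmod 3$ (i.e.\ multiples of $3$) and then using the overlining convention to recover the stated bounds. The only real obstacle is this careful bookkeeping — matching $M_3$ with $M'_3$ across the sign change on residues, and grouping entries with equal right-hand sides — after which the equality $\overline{A}(\ell_1,\ell_2;k,n)=\overline{C}(\ell_1,\ell_2;k,n)$ follows immediately from Corollary~\ref{gene1N} (or, equivalently, Corollary~\ref{gene2N}).
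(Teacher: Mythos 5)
Your proposal is correct and is essentially the paper's own argument: Corollary~\ref{compschur} is obtained by setting $N=3$ in Corollaries~\ref{gene1N} and~\ref{gene2N}, checking that $M_3$ and $M'_3$ coincide under the residue correspondence $1\leftrightarrow -2$, $2\leftrightarrow -1$, $3\leftrightarrow -3$ (with the accompanying swap of $\ell_1,\ell_2$), and summarising the difference conditions. The only details worth stating more carefully are that merging the nine matrix entries into the four cases uses the fact that $\lambda_i-\lambda_{i+1}$ is determined modulo $3$ by the residues of $\lambda_i$ and $\lambda_{i+1}$, and that the minimal-part condition inherited from Theorem~\ref{refdousse2} is $\lambda_s\geq 3\,w_A(\beta_3(-\lambda_s))-\beta_3(-\lambda_s)\in\{1,2,3\}$ (not $\lambda_s\geq 0$), which is indeed automatic for positive parts of the relevant residues.
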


The generalisations of Schur's theorem are summarised in Figure~\ref{fig:gene}, where $A \longrightarrow B$ means that the theorem corresponding to the infinite product $A$ is generalised by the theorem corresponding to the infinite product $B$. Here we use the classical notation
$$(a;q)_{n} := \prod_{j=0}^{n-1} (1-aq^j),$$
for $n \in \N \cup \{ \infty \}.$

%\begin{figure}
%
%
%\begin{tikzpicture}[->,>=latex,scale=2]
%\node[align = center] (S) at (1,0) {Schur~\cite{Schur} \\ $(-q;q^3)_{\infty}(-q^2;q^3)_{\infty}$};
%\node[align = center] (L) at (4,0) {Lovejoy~\cite{Lovejoy} \\ $\frac{(-q;q^{3})_{\infty}(-q^2;q^{3})_{\infty}}{(dq;q^{3})_{\infty}(dq^2;q^{3})_{\infty}}$};
%\node[align=center] (A1) at (0,-1) {Andrews~\cite{Generalisation1} \\ $\prod_{k=1}^r (-q^{a(k)};q^N)_{\infty}$} ;
%\node[align=center] (A2) at (2,-2) {Andrews~\cite{Generalisation2} \\  $\prod_{k=1}^r (-q^{N-a(k)};q^N)_{\infty}$} ;
%{\node[align=center] (D1) at (3,-1) {Dousse~\cite{Doussegene} \\ $\prod_{k=1}^r \frac{(-q^{a(k)};q^N)_{\infty}}{(dq^{a(k)};q^N)_{\infty}}$};}
%{\node[align=center] (D2) at (5,-2) {Dousse~\cite{Doussegene2} \\ $\prod_{k=1}^r \frac{(-q^{N-a(k)};q^N)_{\infty}}{(dq^{N-a(k)};q^N)_{\infty}}$};}
%\node[align=center] (CL) at (1,-3) {Corteel-Lovejoy~\cite{Corteel} \\ $\prod_{k=1}^r (-u_k;q)_{\infty}$} ;
%{\node[align=center] (D3) at (4,-3) {Theorem~\ref{refinement} \\ $\prod_{k=1}^r \frac{(-u_k;q)_{\infty}}{(du_k;q)_{\infty}}$};}
%{\path (S) edge (L);}
%{\path (S) edge (A1);}
%{\path (S) edge (A2);}
%{\path (A1) edge (D1);}
%{\path (A2) edge (D2);}
%{\path (L) edge (D1);}
%{\path (L) edge (D2);}
%{\path (A1) edge (CL);}
%{\path (A2) edge (CL);}
%{\path (D1) edge (D3);}
%{\path (D2) edge (D3);}
%{\path (CL) edge (D3);}
%\end{tikzpicture}
%\end{figure}

\begin{figure}
\label{fig:gene}
\caption{The generalisations of Schur's theorem}
\begin{center}
\includegraphics[width=1.0\textwidth]{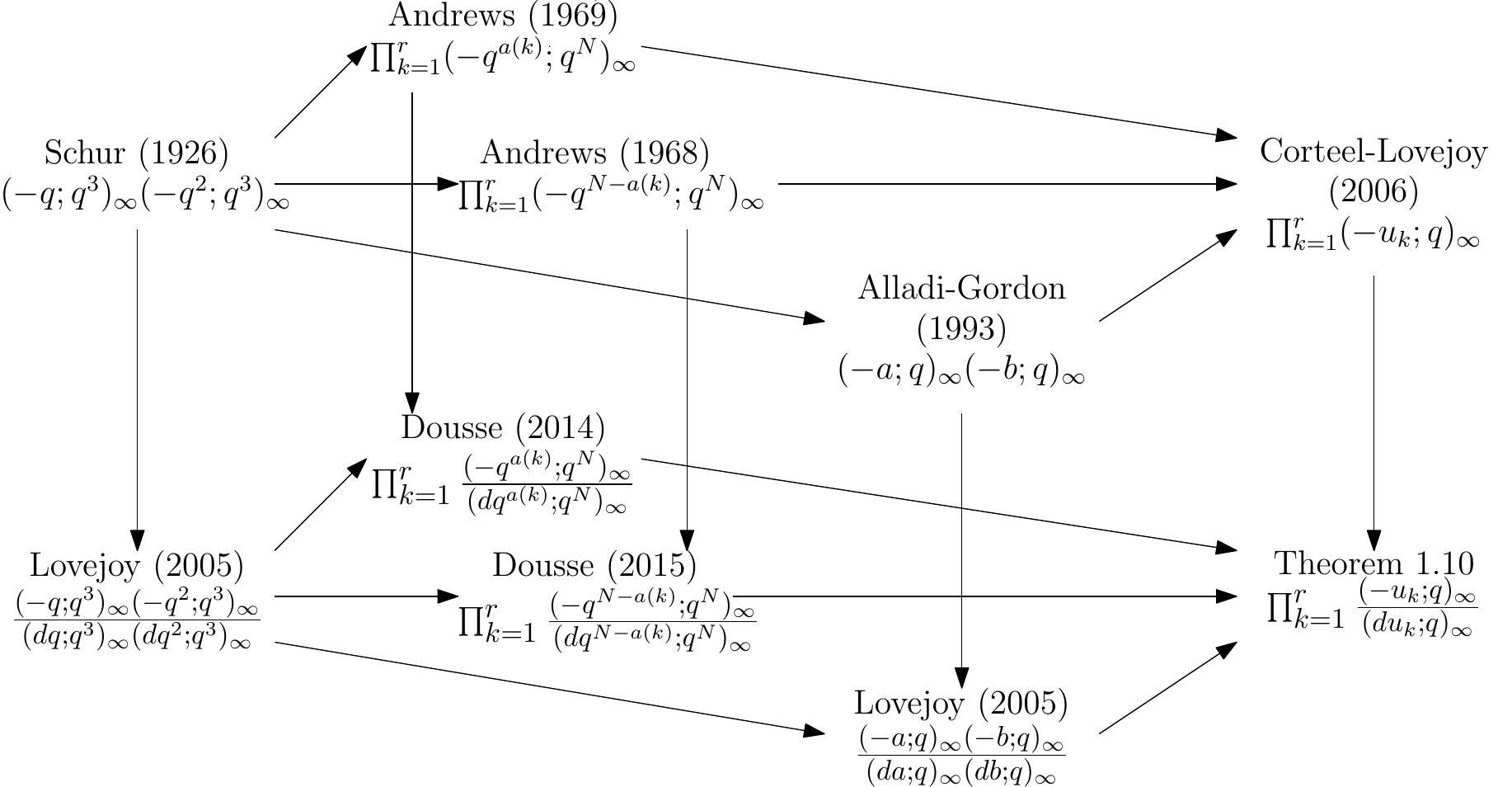}
\end{center}
\end{figure}

The rest of this paper organised as follows. In Section 2, we deduce Theorems~\ref{refdousse} and~\ref{refdousse2} from Theorem~\ref{refinement} and explain how refinements of Andrews' theorems for overpartitions (Theorems~\ref{dousse} and~\ref{dousse2}) can be derived from them. In Section 3, we prove Theorem~\ref{refinement} using the method of weighted words, $q$-difference equations and an induction. 

\section{Generalisations and refinements of Andrews' theorems for overpartitions}
We start by showing how to deduce Theorems~\ref{refdousse} and~\ref{refdousse2} from Theorem~\ref{refinement} and Corollary~\ref{cor}.

\subsection{Proof of Theorem~\ref{refdousse}}
Fix a permutation $\sigma \in S_r.$ By Corollary~\ref{cor}, we have
$$\overline{D}(\ell_1, \dots, \ell_r;k,n)=\overline{E}^{\sigma^{-1}}(\ell_{1}, \dots, \ell_{r};k,n).$$
Now transform the overpartitions counted by $\overline{D}(\ell_1, \dots, \ell_r;k,n)$ and $\overline{E}^{\sigma^{-1}}(\ell_{1}, \dots, \ell_{r};k,n)$ by transforming each part $\lambda_i$ of colour $\tilde{u}_j$ into a part $$ \lambda_i^{dil} = N \lambda_i + \alpha(j) = N \lambda_i + \epsilon_1(j) a(1) + \cdots + \epsilon_r(j) a(r).$$
This corresponds to doing the dilation $q \rightarrow q^N$ and the translations $u_i \rightarrow u_i q^{a(i)}$ for all $i \in \{1, \dots, r \}$ in the generating functions.
The number $k$ of non-overlined parts stays the same and the number $n$ partitioned becomes $$Nn + \ell_1 a(1) + \cdots + \ell_r a(r),$$ for both the overpartitions counted by $\overline{D}(\ell_1, \dots, \ell_r;k,n)$ and by $\overline{E}^{\sigma^{-1}}(\ell_1, \dots, \ell_r;k,n)$.

The parts before transformation were non-negative. After transformation, the parts of the overpartitions counted by $\overline{D}(\ell_1, \dots, \ell_r;k,n)$ belong to $A_N$ and those of the overpartitions counted by $\overline{E}^{\sigma^{-1}}(\ell_1, \dots, \ell_r;k,n)$ belong to $A'_N$.

Let us now turn to the difference conditions. Before transformation, the overpartitions counted by $\overline{E}^{\sigma^{-1}}(\ell_1, \dots, \ell_r;k,n)$ satisfied
$$\lambda_i - \lambda_{i+1} \geq w(c(\lambda_{i+1}))+\chi(\overline{\lambda_{i+1}})-1+\delta(\sigma(c(\lambda_i)),\sigma(c(\lambda_{i+1}))).$$
After the transformations, it becomes
\begin{align*}
\lambda_i^{dil} - \lambda_{i+1}^{dil} \geq &N \left(w_A\left(\alpha(c(\lambda_{i+1})\right)) -1 +\chi(\ov{\lambda_{i+1}^{dil}}) + \delta_A(\sigma(\alpha(c(\lambda_{i})),\sigma(\alpha(c(\lambda_{i+1})))) \right)\\
&+\alpha(c(\lambda_{i}))-\alpha(c(\lambda_{i+1})),
\end{align*}
By the definition of $\beta_N$ and the transformations, we have the equality $\alpha(c(\lambda_{i})) = \beta_N(\lambda_i^{dil}).$ Thus the difference condition becomes

\begin{align*}
\lambda_i^{dil} - \lambda_{i+1}^{dil} \geq &N \left(w_A\left(\beta_N(\lambda_{i+1}^{dil})\right) -1 +\chi(\ov{\lambda_{i+1}^{dil}}) + \delta_A(\sigma(\beta_N(\lambda_i^{dil})),\sigma(\beta_N(\lambda_{i+1}^{dil}))) \right)\\
&+\beta_N(\lambda_{i}^{dil})-\beta_N(\lambda_{i+1}^{dil}),
\end{align*}

This is exactly the difference condition from Theorem~\ref{refdousse}. This completes the proof.

\subsection{Proof of Theorem~\ref{refdousse2}}
Let us now turn to the proof of Theorem~\ref{refdousse2}.
As before, we have
$$\overline{D}(\ell_1, \dots, \ell_r;k,n)=\overline{E}^{\sigma^{-1}}(\ell_{1}, \dots, \ell_{r};k,n).$$
Now transform the overpartitions counted by $\overline{D}(\ell_1, \dots, \ell_r;k,n)$ and $\overline{E}^{\sigma^{-1}}(\ell_{1}, \dots, \ell_{r};k,n)$ by transforming each part $\lambda_i$ of colour $\tilde{u}_j$ into a part $$ \lambda_i^{dil'} = N \left(w(c(\lambda_i)+\lambda_i \right) - \alpha(j) = N \left(w(c(\lambda_i)+\lambda_i \right) - \epsilon_1(j) a(1) - \cdots - \epsilon_r(j) a(r).$$
This corresponds to doing the dilation $q \rightarrow q^N$ and the translations $u_i \rightarrow u_i q^{N-a(i)}$ for all $i \in \{1, \dots, r \}$ in the generating functions.
The number $k$ of non-overlined parts stays the same and the number $n$ partitioned becomes $$N (n + \ell_1 + \cdots + \ell_r) - \ell_1 a(1) - \cdots - \ell_r a(r),$$ for both the overpartitions counted by $\overline{D}(\ell_1, \dots, \ell_r;k,n)$ and by $\overline{E}^{\sigma^{-1}}(\ell_1, \dots, \ell_r;k,n)$.

The parts before transformation were non-negative. After transformation, the parts of the overpartitions counted by $\overline{D}(\ell_1, \dots, \ell_r;k,n)$ belong to $-A_N$ and those of the overpartitions counted by $\overline{E}^{\sigma^{-1}}(\ell_1, \dots, \ell_r;k,n)$ belong to $A'_N$, with the additional condition that for all $i$,
\begin{equation}
\label{minimum}
\lambda_i^{dil'} \geq N w(c(\lambda_i)) - \alpha(c(\lambda_i)) = N w_A(\beta_N(-\lambda_i^{dil'})) - \beta_N(-\lambda_i^{dil'})).
\end{equation}
Indeed by the definition of $\beta_N$ and the transformations, we have now $\alpha(c(\lambda_{i})) = \beta_N(-\lambda_i^{dil'}).$

The difference condition for the overpartitions counted by $\overline{E}^{\sigma^{-1}}(\ell_1, \dots, \ell_r;k,n)$ was
$$\lambda_i - \lambda_{i+1} \geq w(c(\lambda_{i+1}))+\chi(\overline{\lambda_{i+1}})-1+\delta(\sigma(c(\lambda_i)),\sigma(c(\lambda_{i+1}))).$$
After the transformations, it becomes
\begin{align*}
\lambda_i^{dil'} - \lambda_{i+1}^{dil'} \geq &N \left(w_A\left(\alpha(c(\lambda_{i})\right)) -1 +\chi(\ov{\lambda_{i+1}^{dil}}) + \delta_A(\sigma(\alpha(c(\lambda_{i})),\sigma(\alpha(c(\lambda_{i+1})))) \right)\\
&-\alpha(c(\lambda_{i}))+\alpha(c(\lambda_{i+1})),
\end{align*}
which is equivalent to
\begin{align*}
\lambda_i^{dil'} - \lambda_{i+1}^{dil'} \geq &N \left(w_A\left(\beta_N(-\lambda_{i}^{dil})\right) -1 +\chi(\ov{\lambda_{i+1}^{dil'}}) + \delta_A(\sigma(\beta_N(-\lambda_i^{dil})),\sigma(\beta_N(-\lambda_{i+1}^{dil}))) \right)\\
&-\beta_N(-\lambda_{i}^{dil'})+\beta_N(-\lambda_{i+1}^{dil'}),
\end{align*}

This is exactly the difference condition from Theorem~\ref{refdousse2}. This completes the proof.

\subsection{Refinement of Theorem~\ref{dousse}}
We now want to show that the case $\sigma=Id$ in Theorem~\ref{refdousse} is actually a refinement of Theorem~\ref{dousse}. To do so, let us reformulate Theorem~\ref{dousse}. The minimal difference between two consecutive parts $\lambda_i$ and $\lambda_{i+1}$ is 
$$\lambda_i - \lambda_{i+1} \geq N \left(w_A\left(\beta_N(\lambda_{i+1})\right) -1 +\chi(\ov{\lambda_{i+1}}) \right)+v_A(\beta_N(\lambda_{i+1}))-\beta_N(\lambda_{i+1}).$$
But by the definition of $\beta_N$, $\lambda_i - \lambda_{i+1}$ is always congruent to $\beta_N(\lambda_{i})-\beta_N(\lambda_{i+1})$ modulo $N$. Therefore the difference condition is actually equivalent to having a minimal difference
$$N\left(w_A\left(\beta_N(\lambda_{i+1})\right) -1 +\chi(\ov{\lambda_{i+1}}) \right)+\beta_N(\lambda_{i})-\beta_N(\lambda_{i+1}),$$
if $v_A(\beta_N(\lambda_{i+1})) \leq \beta_N(\lambda_{i})$,
and
$$N\left(w_A\left(\beta_N(\lambda_{i+1})\right) +\chi(\ov{\lambda_{i+1}}) \right)+\beta_N(\lambda_{i})-\beta_N(\lambda_{i+1}),$$
if $v_A(\beta_N(\lambda_{i+1})) > \beta_N(\lambda_{i}).$

We will be able to conclude using the following lemma.
\begin{lemma}
\label{comparison}
For $\alpha,\beta \in A'$, we have $v_A(\alpha) > \beta$ if and only if $v_A(\alpha) > z_A(\beta)$.
\end{lemma}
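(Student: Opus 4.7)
The plan is to exploit the super-increasing condition $\sum_{i=1}^{k-1} a(i) < a(k)$ that defines $A$, which is precisely the property that makes the representation of elements of $A'$ as sums of the $a(i)$ essentially unique and forces large jumps between consecutive $a(i)$'s.

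One direction is immediate. If $v_A(\alpha) > \beta$, then since $z_A(\beta)$ is by definition one of the summands of $\beta$ and all summands lie in $A \subset \mathbb{Z}_{>0}$, we have $z_A(\beta) \leq \beta$, and therefore $v_A(\alpha) > \beta \geq z_A(\beta)$.

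For the converse, suppose $v_A(\alpha) > z_A(\beta)$, and write the defining sum $\beta = a(i_1) + \cdots + a(i_s)$ with $i_1 < \cdots < i_s$, so that $z_A(\beta) = a(i_s)$. Since $v_A(\alpha)$ itself belongs to $A$, the inequality $v_A(\alpha) > a(i_s)$ forces $v_A(\alpha) = a(j)$ for some $j \geq i_s + 1$; in particular $v_A(\alpha) \geq a(i_s + 1)$. I would then invoke the defining property of $A$ at $k = i_s + 1$, which gives
$$a(i_s+1) > \sum_{i=1}^{i_s} a(i) \geq a(i_1) + \cdots + a(i_s) = \beta,$$
and chain these inequalities to conclude $v_A(\alpha) \geq a(i_s+1) > \beta$.

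There is essentially no obstacle here: the lemma is a direct unpacking of the hypothesis on $A$, whose whole point is that the next available element strictly dominates every subset sum of earlier elements. The only subtlety worth flagging is the step $v_A(\alpha) > a(i_s) \implies v_A(\alpha) \geq a(i_s+1)$, which uses that $v_A(\alpha) \in A$ (not merely in $A'$); once this is noted the rest is a one-line estimate.
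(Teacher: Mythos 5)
Your proof is correct and is essentially identical to the paper's: the easy direction via $z_A(\beta)\leq\beta$, and the converse by noting $v_A(\alpha)\in A$ forces $v_A(\alpha)\geq a(i_s+1)>\sum_{i=1}^{i_s}a(i)\geq\beta$ using the super-increasing condition. In fact your write-up fixes a small typo in the paper's final chain of inequalities (which ends with $\geq z_A(\beta)$ where $\geq\beta$ is meant).
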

\begin{proof}
By the definition of $z_A$, $z_A(\beta) \leq \beta$. Thus if $v_A(\alpha) > \beta$, then $v_A(\alpha) > z_A(\beta)$.

Let us now show the other implication. Assume that $v_A(\alpha) > z_A(\beta)$. 
If we write $z_A(\beta)=a(k)$, then $v_A(\alpha) \geq a(k+1)$, but by the definition of $A$, we know that for all $k$,
$$\sum_{i=1}^{k} a(i) <a(k+1).$$
Thus
$$v_A(\alpha) \geq a(k+1) > \sum_{i=1}^{k} a(i) \geq z_A(\beta).$$

\end{proof}

Hence by Lemma~\ref{comparison}, the difference condition in Theorem~\ref{dousse} is actually equivalent to
\begin{align*}
\lambda_i - \lambda_{i+1} \geq& N \left(w_A\left(\beta_N(\lambda_{i+1})\right)-1 +\chi(\ov{\lambda_{i+1}}) + \delta_A(\beta_N(\lambda_i),\beta_N(\lambda_{i+1}) \right)\\
&+\beta_N(\lambda_{i})-\beta_N(\lambda_{i+1}),
\end{align*}

which is exactly the difference condition of Theorem~\ref{refdousse} with $\sigma= Id$.

\subsection{Refinement of Theorem~\ref{dousse2}}
Finally, let us show that the case $\sigma= (n, n-1, \dots, 1)$ in Theorem~\ref{refdousse2} is actually a refinement of Theorem~\ref{dousse2}. To do so, let us reformulate Theorem~\ref{dousse2}. The minimal difference between two consecutive parts $\lambda_i$ and $\lambda_{i+1}$ is 
$$
\lambda_i - \lambda_{i+1} \geq N \left(w_A\left(\beta_N(-\lambda_{i})\right) -1 +\chi(\ov{\lambda_{i+1}})\right)+v_A(\beta_N(-\lambda_{i}))-\beta_N(-\lambda_{i}).
$$
But $\lambda_i - \lambda_{i+1}$ is always congruent to $-\beta_N(-\lambda_{i})+\beta_N(-\lambda_{i+1})$ modulo $N$. Therefore the difference condition is actually equivalent to having a minimal difference
$$N \left(w_A\left(\beta_N(-\lambda_{i})\right) -1 +\chi(\ov{\lambda_{i+1}})\right)-\beta_N(-\lambda_{i})+\beta_N(-\lambda_{i+1}),$$
if $v_A(\beta_N(-\lambda_{i})) \leq \beta_N(-\lambda_{i+1})$,
and
$$N \left(w_A\left(\beta_N(-\lambda_{i})\right) +\chi(\ov{\lambda_{i+1}})\right)-\beta_N(-\lambda_{i})+\beta_N(-\lambda_{i+1}),$$
if $v_A(\beta_N(-\lambda_{i})) > \beta_N(-\lambda_{i+1}).$

Again, by Lemma~\ref{comparison}, this difference condition is equivalent to
\begin{align*}
\lambda_i - \lambda_{i+1} \geq &N \left(w_A\left(\beta_N(-\lambda_{i})\right) -1 +\chi(\ov{\lambda_{i+1}}) + \delta_A(\beta_N(-\lambda_{i+1}),\beta_N(-\lambda_{i})) \right)\\
&+\beta_N(-\lambda_{i+1})-\beta_N(-\lambda_{i}).
\end{align*}
But when $\sigma= (n, n-1, \dots, 1)$, then $$\delta_A(\sigma(\beta_N(-\lambda_i)),\sigma(\beta_N(-\lambda_{i+1})))=\delta_A(\beta_N(-\lambda_{i+1}),\beta_N(-\lambda_{i})),$$
so we obtain exactly the same difference condition in Theorem~\ref{dousse2}.

Finally, as $\lambda_s$ is always congruent to $-\beta_N(-\lambda_s)$ modulo $N$, the condition $\lambda_s \geq N w_A(\beta_N(-\lambda_s))- \beta_N(-\lambda_s)$ is equivalent to $\lambda_s \geq N (w_A(\beta_N(-\lambda_s))- 1).$
This completes the proof.

\section{Proof of Theorem~\ref{refinement}}
Let us now turn to the proof of Theorem~\ref{refinement}.

It is clear that the generating function for the overpartitions with congruence conditions is
$$ \sum_{\ell_1, \dots, \ell_r,k,n \geq 0} D(\ell_1, \dots, \ell_r;k,n) u_1^{\ell_1} \cdots u_r^{\ell_r} d^k q^n = \prod_{k=1}^r \frac{(-u_k;q)_{\infty}}{(du_k;q)_{\infty}}.$$
The difficult task is to show that the generating function for overpartitions enumerated by $E(\ell_1, \dots, \ell_r;k,n)$ is the same. To do so, we adapt techniques introduced in~\cite{Doussegene} by taking colours into account. First, we establish the $q$-difference equation satisfied by the generating function with one added variable counting the number of parts, and then we prove by induction that a function satisfying this $q$-difference equation is equal to $\prod_{k=1}^r \frac{(-u_k;q)_{\infty}}{(du_k;q)_{\infty}}$ when the added variable is equal to $1$.

\subsection{The $q$-difference equation}
Let us first establish the $q$-difference equation.
Let $p_{i_{\tilde{u}_j}}(\ell_1, \dots, \ell_r;k,m,n)$ denote the number of overpartitions counted by $E(\ell_1, \dots, \ell_r;k,n)$ having $m$ parts such that the smallest part is at least $i_{\tilde{u}_j}$ (the non-negative integers are ordered according to their colours : $0_{\tilde{u}_1} < \cdots < 0_{\tilde{u}_{2^r-1}} < 1_{\tilde{u}_1} < \cdots$).

We first prove the following lemma.

\begin{lemma}
\label{lemma1}
If $1 \leq j \leq 2^r-2$, then
\begin{equation}
\label{eq1}
\begin{aligned}
&p_{0_{\tilde{u}_j}}(\ell_1, \dots, \ell_r;k,m,n) - p_{0_{\tilde{u}_{j+1}}}(\ell_1, \dots, \ell_r;k,m,n)
\\ &= p_{0_{v(\tilde{u}_j)}}(\ell_1 -\epsilon_1(j), \dots, \ell_r-\epsilon_r(j);k,m-1,n-(m-1)w(\tilde{u}_j))
\\&+ p_{0_{v(\tilde{u}_j)}}(\ell_1 -\epsilon_1(j), \dots, \ell_r-\epsilon_r(j);k-1,m-1,n-(m-1)(w(\tilde{u}_j)-1)),
\end{aligned}
\end{equation}

\begin{equation}
\label{eq1bis}
\begin{aligned}
&p_{0_{\tilde{u}_{2^r-1}}}(\ell_1, \dots, \ell_r;k,m,n) - p_{1_{\tilde{u}_{1}}}(\ell_1, \dots, \ell_r;k,m,n)
\\ &= p_{0_{u_1}}(\ell_1 -1, \dots, \ell_r-1;k,m-1,n-(m-1)r)
\\&+ p_{0_{u_1}}(\ell_1 -1, \dots, \ell_r-1;k-1,m-1,n-(m-1)(r-1)),
\end{aligned}
\end{equation}

\begin{equation}
\label{eq2}
p_{1_{\tilde{u}_1}}(\ell_1, \dots, \ell_r;k,m,n) = p_{0_{\tilde{u}_1}}(\ell_1, \dots, \ell_r;k,m,n-m).
\end{equation}
\end{lemma}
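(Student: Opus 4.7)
The plan is to prove all three identities bijectively on the underlying overpartitions. For \eqref{eq1}, the difference $p_{0_{\tilde{u}_j}}(\ell_1,\dots,\ell_r;k,m,n) - p_{0_{\tilde{u}_{j+1}}}(\ell_1,\dots,\ell_r;k,m,n)$ counts those overpartitions whose smallest part $\lambda_m$ equals exactly $0_{\tilde{u}_j}$. I split this set according to whether $\lambda_m$ is overlined. If $\lambda_m = \overline{0_{\tilde{u}_j}}$, the difference condition reads $\lambda_{m-1} \geq w(\tilde{u}_j) + \delta(c(\lambda_{m-1}), \tilde{u}_j)$. I remove $\lambda_m$ and subtract $w(\tilde{u}_j)$ from each of the $m-1$ remaining parts; since the same quantity is subtracted from every part, the internal difference conditions are preserved. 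The counts $\ell_i$ decrease by $\epsilon_i(j)$, $k$ is unchanged, and $n$ drops by $(m-1)w(\tilde{u}_j)$, giving the first term on the right of \eqref{eq1}. In the non-overlined case, subtracting $w(\tilde{u}_j)-1$ instead produces the second term, with $k$ replaced by $k-1$.

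The key technical point is verifying that the shifted $\lambda_{m-1}$ actually satisfies ``smallest part $\geq 0_{v(\tilde{u}_j)}$''. If $\delta(c(\lambda_{m-1}), \tilde{u}_j) = 0$, then $z(c(\lambda_{m-1})) \geq v(\tilde{u}_j)$, and writing $c(\lambda_{m-1}) = \tilde{u}_i$ and $v(\tilde{u}_j) = u_s$ one checks via binary expansions that $i \geq 2^{s-1}$, so $c(\lambda_{m-1}) \geq v(\tilde{u}_j)$ in the colour ordering and the shifted part is indeed $\geq 0_{v(\tilde{u}_j)}$. If $\delta = 1$, the shifted part has value $\geq 1$ and is trivially larger than any $0_{\tilde{u}}$. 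The inverse map (prepend $0_{\tilde{u}_j}$ overlined or not, then add back the appropriate shift) is well-defined by the reverse check: if the smallest part of the preimage has value $0$ then its colour is $\geq v(\tilde{u}_j)$, forcing $\delta = 0$ and making the reconstructed gap exact; if instead the smallest part has value $\geq 1$ the gap inequality holds for free, irrespective of $\delta$.

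Equation \eqref{eq1bis} follows from the same argument applied to $\tilde{u}_{2^r-1} = u_1 \cdots u_r$: here $w(\tilde{u}_{2^r-1}) = r$, $v(\tilde{u}_{2^r-1}) = u_1$, every $\epsilon_i(2^r-1) = 1$, and the post-shift smallest-part bound becomes $\geq 0_{u_1} = 0_{\tilde{u}_1}$. Equation \eqref{eq2} is an immediate bijection: subtracting $1$ from each of the $m$ parts preserves colours, the non-overlined count, and all pairwise differences while shifting the smallest-part threshold from $\geq 1_{\tilde{u}_1}$ to $\geq 0_{\tilde{u}_1}$ and reducing the sum by $m$.

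I expect the main obstacle to be the colour bookkeeping sketched above: one must check carefully that, at every step, the interaction between the $\delta$ term in the difference condition and the colour of the (new) smallest part correctly translates the smallest-part constraint between the regimes indexed by $0_{\tilde{u}_j}$ and $0_{v(\tilde{u}_j)}$, and do so in a way that is reversible enough to yield a genuine bijection in both directions.
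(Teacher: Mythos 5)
Your proof is correct and follows essentially the same route as the paper: the difference of the two $p$'s counts overpartitions with smallest part exactly $0_{\tilde{u}_j}$, the split according to whether that part is overlined gives the shifts by $w(\tilde{u}_j)$ and $w(\tilde{u}_j)-1$ respectively, and \eqref{eq1bis}, \eqref{eq2} are handled identically. Your extra colour bookkeeping (the equivalence $z(c(\lambda_{m-1}))\geq v(\tilde{u}_j)$ iff $c(\lambda_{m-1})\geq v(\tilde{u}_j)$ in the colour order, and the explicit check of the inverse map) only spells out what the paper leaves implicit, so no gap.
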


\begin{proof}
Let us first prove~\eqref{eq1}.
The quantity $$p_{0_{\tilde{u}_j}}(\ell_1, \dots, \ell_r;k,m,n) - p_{0_{\tilde{u}_{j+1}}}(\ell_1, \dots, \ell_r;k,m,n)$$ is the number of overpartitions $\lambda_1 +\cdots+ \lambda_m$ of $n$ enumerated by $p_{0_{\tilde{u}_j}}(\ell_1, \dots, \ell_r;k,m,n)$ such that the smallest part is equal to $0_{\tilde{u}_j}$.

If $\lambda_m = \ov{0_{\tilde{u}_j}}$ is overlined, then by the difference conditions in Theorem~\ref{refinement},
$$\lambda_{m-1} \geq 1 + w(\tilde{u}_j)+\delta(c(\lambda_{m-1}),v(\tilde{u}_j)).$$
This is equivalent to
$$\lambda_{m-1} \geq \begin{cases}  w(\tilde{u}_j) \text{ if $\lambda_{m-1}$'s colour is at least $v(\tilde{u}_j)$,}\\
1 + w(\tilde{u}_j) \text{ if $\lambda_{m-1}$'s colour is less than $v(\tilde{u}_j)$.}
\end{cases}$$
In other words,
$$\lambda_{m-1} \geq (w(\tilde{u}_j))_{v(\tilde{u}_j)}.$$
Then we remove $\lambda_m = \ov{0_{\tilde{u}_j}}$ and subtract $w(\tilde{u}_j)$ from every other part. For all $i \in \{1, \dots, r\},$ the number of parts using $u_i$ as a primary colour decreases by $1$ if and only if $u_i$ appeared in $\tilde{u}_j$, ie. if and only if $\epsilon_i(j)=1$. The number of parts is reduced to $m-1$, the number of non-overlined parts is still $k$, and the number partitioned is now $n-(m-1)w(\tilde{u}_j)$. Moreover the smallest part is now at least $0_{v(\tilde{u}_j)}.$ Therefore we obtain an overpartition counted by $$p_{0_{v(\tilde{u}_j)}}(\ell_1 -\epsilon_1(j), \dots, \ell_r-\epsilon_r(j);k,m-1,n-(m-1)w(\tilde{u}_j)).$$

If $\lambda_m = 0_{\tilde{u}_j}$ is not overlined, then in the same way as before, by the difference conditions in Theorem~\ref{refinement},
$$\lambda_{m-1} \geq w(\tilde{u}_j)-1 +\delta(c(\lambda_{m-1}),v(\tilde{u}_j)).$$
In other words,
$$\lambda_{m-1} \geq (w(\tilde{u}_j)-1)_{v(\tilde{u}_j)}.$$
Then we remove $\lambda_m = 0_{\tilde{u}_j}$ and subtract $w(\tilde{u}_j)-1$ from every other part. For all $i \in \{1, \dots, r\},$ the number of parts using $u_i$ as a primary colour decreases by $1$ if and only if $\epsilon_i(j)=1$. The number of parts is reduced to $m-1$, the number of non-overlined parts is reduced to $k-1$, and the number partitioned is now $n-(m-1)(w(\tilde{u}_j)-1)$. Moreover the smallest part is now at least $0_{v(\tilde{u}_j)}.$ Therefore we obtain an overpartition counted by $$p_{0_{v(\tilde{u}_j)}}(\ell_1 -\epsilon_1(j), \dots, \ell_r-\epsilon_r(j);k-1,m-1,n-(m-1)(w(\tilde{u}_j)-1)).$$

The proof of~\eqref{eq1bis} is exactly the same with $j=2^r-1$.

Finally, to prove~\eqref{eq2}, we take a partition enumerated by $p_{1_{\tilde{u}_1}}(\ell_1, \dots, \ell_r;k,m,n)$ and subtract $1$ from each part. We obtain a partition enumerated by $$p_{0_{\tilde{u}_1}}(\ell_1, \dots, \ell_r; k,m,n-m).$$
\end{proof}

These recurrences can be translated as $q$-difference equations on generating functions.

Let us define
\begin{equation}
\label{def_fi}
\begin{aligned}
f_{i_{\tilde{u}_j}}(x) &=f_{i_{\tilde{u}_j}}(u_1, \dots, u_r, d,x,q)\\
&:= \sum_{\ell_1, \dots, \ell_r,k,m,n \geq 0} p_{i_{\tilde{u}_j}}(\ell_1, \dots, \ell_r;k,m,n) u_1^{\ell_1} \cdots u_r^{\ell_r} d^k x^m q^n.
\end{aligned}
\end{equation}

We want to find an expression for $f_{0_{u_1}}(1)$, which is the generating function for all overpartitions counted by  $E(\ell_1, \dots, \ell_r;k,n)$.

Lemma~\ref{lemma1} implies the following equations.

\begin{lemma}
\label{lemma2}
If $1 \leq j \leq 2^r-2$, then
\begin{equation}
\label{eqf1}
\begin{aligned}
f_{0_{\tilde{u}_j}}(x) - f_{0_{\tilde{u}_{j+1}}}(x) &= x u_1^{\epsilon_1(j)} \cdots u_r^{\epsilon_r(j)} f_{0_{v(\tilde{u}_j)}}(xq^{w(\tilde{u}_j)})
\\&+ dx u_1^{\epsilon_1(j)} \cdots u_r^{\epsilon_r(j)} f_{0_{v(\tilde{u}_j)}}(xq^{w(\tilde{u}_j)-1}),
\end{aligned}
\end{equation}

\begin{equation}
\label{eqf1bis}
\begin{aligned}
f_{0_{\tilde{u}_{2^r-1}}}(x) - f_{1_{\tilde{u}_{1}}}(x) &= x u_1 \cdots u_r  f_{0_{u_1}}(xq^r)+ dx u_1 \cdots u_r f_{0_{u_1}}(xq^{r-1}),
\end{aligned}
\end{equation}

\begin{equation}
\label{eqf2}
f_{1_{\tilde{u}_1}}(x) = f_{0_{\tilde{u}_1}}(xq).
\end{equation}
\end{lemma}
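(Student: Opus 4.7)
The plan is to derive each of the three $q$-difference equations by multiplying the corresponding identity in Lemma~\ref{lemma1} by $u_1^{\ell_1}\cdots u_r^{\ell_r} d^k x^m q^n$ and summing over all non-negative integers $\ell_1, \dots, \ell_r, k, m, n$. By the definition~\eqref{def_fi}, the left-hand sides of~\eqref{eq1}, \eqref{eq1bis} and~\eqref{eq2} then assemble directly into $f_{0_{\tilde{u}_j}}(x) - f_{0_{\tilde{u}_{j+1}}}(x)$, $f_{0_{\tilde{u}_{2^r-1}}}(x) - f_{1_{\tilde{u}_1}}(x)$ and $f_{1_{\tilde{u}_1}}(x)$ respectively.

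For the right-hand side of~\eqref{eq1}, I would change variables in the first summand by setting $\ell_i' = \ell_i - \epsilon_i(j)$, $m' = m-1$ and $n' = n - (m-1)w(\tilde{u}_j)$, so that
$$u_1^{\ell_1}\cdots u_r^{\ell_r} d^k x^m q^n \;=\; x\, u_1^{\epsilon_1(j)}\cdots u_r^{\epsilon_r(j)}\, u_1^{\ell_1'}\cdots u_r^{\ell_r'} d^k (xq^{w(\tilde{u}_j)})^{m'} q^{n'}.$$
Summing over $\ell_i', k, m', n' \geq 0$ reproduces the generating function $f_{0_{v(\tilde{u}_j)}}(xq^{w(\tilde{u}_j)})$ multiplied by the prefactor $x\, u_1^{\epsilon_1(j)}\cdots u_r^{\epsilon_r(j)}$. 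The second summand of~\eqref{eq1} is handled identically, except that the $n$-shift is $(m-1)(w(\tilde{u}_j)-1)$ and the index $k$ is decremented by one, which introduces an extra factor of $d$ and replaces $xq^{w(\tilde{u}_j)}$ by $xq^{w(\tilde{u}_j)-1}$. This produces~\eqref{eqf1}. Equation~\eqref{eqf1bis} follows from exactly the same computation applied to~\eqref{eq1bis}, using that $w(\tilde{u}_{2^r-1})=r$, $v(\tilde{u}_{2^r-1})=u_1$ and $\epsilon_i(2^r-1)=1$ for every $i$, so that the monomial factor becomes $u_1\cdots u_r$.

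Finally, \eqref{eqf2} is the simplest: the substitution $n'=n-m$ in~\eqref{eq2} gives $x^m q^n = (xq)^m q^{n'}$, and summing immediately yields $f_{0_{\tilde{u}_1}}(xq)$. There is no deep obstacle here; the derivation is mechanical bookkeeping. The only points requiring vigilance are to keep the three dilations $x \mapsto xq^{w(\tilde{u}_j)}$, $x \mapsto xq^{w(\tilde{u}_j)-1}$ and $x \mapsto xq$ in step with the correct shifts in $m$ and $n$, and to track carefully which primary colours are consumed when the colour $\tilde{u}_j$ is removed from a part (controlled by the binary vector $(\epsilon_i(j))_{i=1}^r$) so that the prefactor $u_1^{\epsilon_1(j)}\cdots u_r^{\epsilon_r(j)}$ is emitted correctly.
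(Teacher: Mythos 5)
Your proposal is correct and is exactly the argument the paper intends: the paper states Lemma~\ref{lemma2} as an immediate translation of Lemma~\ref{lemma1} into generating functions, and your term-by-term summation with the shifts $\ell_i\mapsto\ell_i-\epsilon_i(j)$, $m\mapsto m-1$, $k\mapsto k-1$, $n\mapsto n-(m-1)w(\tilde{u}_j)$ (resp.\ $n-(m-1)(w(\tilde{u}_j)-1)$, $n-m$) is precisely the routine bookkeeping being suppressed. No gaps.
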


Let $ 2 \leq k \leq r$. Note that $\tilde{u}_{2^{k-1}}= u_k.$ Adding equations~\eqref{eqf1} together for $1 \leq j \leq 2^{k-1}-1$ gives

\begin{equation}
\label{eq3.5}
\begin{aligned}
f_{0_{u_1}}(x) - f_{0_ {u_k}}(x) =\sum_{j=1}^{2^{k-1}-1} &\left( x u_1^{\epsilon_1(j)} \cdots u_r^{\epsilon_r(j)} f_{0_{v(\tilde{u}_j)}}(xq^{w(\tilde{u}_j)}) \right. \\
& \left. + dx u_1^{\epsilon_1(j)} \cdots u_r^{\epsilon_r(j)} f_{0_{v(\tilde{u}_j)}}(xq^{w(\tilde{u}_j)-1})\right).
\end{aligned}
\end{equation}
In the same way, adding equations~\eqref{eqf1} together for $2^{k-2} \leq j \leq 2^{k-1}-1$ gives

\begin{equation}
\label{eq3.6}
\begin{aligned}
f_{0_{u_{k-1}}}(x) - f_{0_{u_k}}(x) =\sum_{j=2^{k-2}}^{2^{k-1}-1} &\left( x u_1^{\epsilon_1(j)} \cdots u_r^{\epsilon_r(j)} f_{0_{v(\tilde{u}_j)}}(xq^{w(\tilde{u}_j)}) \right. \\
& \left. + dx u_1^{\epsilon_1(j)} \cdots u_r^{\epsilon_r(j)} f_{0_{v(\tilde{u}_j)}}(xq^{w(\tilde{u}_j)-1})\right).
\end{aligned}
\end{equation}
For all  $2^{k-2} \leq j \leq 2^{k-1}-1$, $\tilde{u}_j$ is of the form
$$\tilde{u}_j=u_1^{\epsilon_1(j)} \cdots u_{k-2}^{\epsilon_{k-2}(j)} u_{k-1}.$$
Thus~\eqref{eq3.6} can be rewritten as

\begin{align*}
f_{0_{u_{k-1}}}(x) - f_{0_ {u_k}}(x) &= xu_{k-1} f_{0_{u_{k-1}}}(xq) + dxu_{k-1} f_{0_{u_{k-1}}}(x)
\\&+ q^{-1}u_{k-1} \sum_{j=1}^{2^{k-2}-1} \left( xq u_1^{\epsilon_1(j)} \cdots u_{k-2}^{\epsilon_{k-2}(j)}f_{0_{v(\tilde{u}_j)}}(xq^{w(\tilde{u}_j)+1}) \right.\\
&\qquad \qquad \qquad \qquad \left.+ dxq u_1^{\epsilon_1(j)} \cdots u_{k-2}^{\epsilon_{k-2}(j)}f_{0_{v(\tilde{u}_j)}}(xq^{w(\tilde{u}_j)})\right)\\
&= xu_{k-1} f_{0_{u_{k-1}}}(xq) + dxu_{k-1} f_{0_{u_{k-1}}}(x)
\\&+ q^{-1}u_{k-1} \left( f_{0_{u_{1}}}(xq) - f_{0_ {u_{k-1}}}(xq) \right),
\end{align*}
where we used~\eqref{eq3.5} with $k$ replaced by $k-1$ and $x$ replaced by $xq$ to obtain the last equality.

Thus
\begin{equation}
\label{eq3.7}
\begin{aligned}
f_{0_ {u_k}}(x) &= (1-dxu_{k-1}) f_{0_{u_{k-1}}}(x) -q^{-1}u_{k-1}  f_{0_{u_{1}}}(xq) \\
&+ q^{-1}u_{k-1}  (1-xq) f_{0_{u_{k-1}}}(xq).
\end{aligned}
\end{equation}

In the same way, on can show that
\begin{equation}
\label{eq3.7bis}
\begin{aligned}
f_{1_ {u_1}}(x) &= (1-dxu_{r}) f_{0_{u_{r}}}(x) -q^{-1}u_{r}  f_{0_{u_{1}}}(xq) \\
&+ q^{-1}u_{r}  (1-xq) f_{0_{u_{r}}}(xq).
\end{aligned}
\end{equation}

We are almost ready to give the $q$-difference equation relating functions $f_{0_{u_1}}\left(xq^{k}\right)$ together for $k \geq 0$. To do so, recall that the $q$-binomial coefficients are defined as
$${m \brack r}_q :=
\begin{cases}
\frac{\left(1-q^m\right)\left(1-q^{m-1}\right) \dots \left(1-q^{m-r+1}\right)}{\left(1-q\right) \left(1-q^2\right) \dots \left(1-q^r\right)}\ \text{if}\ 0 \leq r \leq m,\\
0 \ \text{otherwise}.
\end{cases}$$
They are $q$-analogues of the binomial coefficients and satisfy $q$-analogues of the Pascal triangle identity~\cite{Gasper}.

\begin{proposition}
\label{pascal}
For all integers $0 \leq r \leq m$,
\begin{equation}
\label{pascal1}
{m \brack r}_q = q^r {m-1 \brack r}_q + {m-1 \brack r-1}_q,
\end{equation}
\begin{equation}
\label{pascal2}
{m \brack r}_q ={m-1 \brack r}_q + q^{m-r} {m-1 \brack r-1}_q.
\end{equation}
\end{proposition}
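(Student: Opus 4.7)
The plan is to prove both identities by direct manipulation of the closed-form expression
$${m \brack r}_q = \frac{(q;q)_m}{(q;q)_r\,(q;q)_{m-r}},$$
where $(q;q)_n = \prod_{j=1}^{n}(1-q^j)$, which follows at once from the definition given in the text. The two identities are then obtained by splitting the factor $1-q^m$ in two different ways.

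For identity \eqref{pascal1}, I would first rewrite the right-hand side over the common denominator $(q;q)_r(q;q)_{m-r}$. Using the telescoping relations $(q;q)_r = (1-q^r)(q;q)_{r-1}$ and $(q;q)_{m-r} = (1-q^{m-r})(q;q)_{m-r-1}$, the right-hand side becomes
$$\frac{(q;q)_{m-1}}{(q;q)_r(q;q)_{m-r}}\Bigl[\,q^r(1-q^{m-r}) + (1-q^r)\,\Bigr].$$
The bracket simplifies to $1-q^m$, which combined with $(1-q^m)(q;q)_{m-1} = (q;q)_m$ yields exactly $\binom{m}{r}_q$.

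For identity \eqref{pascal2}, the argument is symmetric: the right-hand side reduces to
$$\frac{(q;q)_{m-1}}{(q;q)_r(q;q)_{m-r}}\Bigl[\,(1-q^{m-r}) + q^{m-r}(1-q^r)\,\Bigr],$$
and again the bracket collapses to $1-q^m$, giving $\binom{m}{r}_q$. The boundary cases ($r=0$, $r=m$, and $r<0$ or $r>m$) should be checked separately to confirm that both sides vanish or agree trivially under the convention established in the text.

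There is no real obstacle here; the result is classical and the only care needed is bookkeeping of the $q$-shifted factorials and of the edge cases. An alternative would be an inductive combinatorial proof via lattice paths or subspaces of $\mathbb{F}_q^m$, but the algebraic verification above is the shortest route and fits the expository style of the paper; I would therefore present it as a two-line computation rather than invoking external machinery.
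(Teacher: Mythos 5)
Your verification is correct: both identities follow by putting the right-hand sides over the common denominator $(q;q)_r(q;q)_{m-r}$ and observing that the bracketed factors $q^r(1-q^{m-r})+(1-q^r)$ and $(1-q^{m-r})+q^{m-r}(1-q^r)$ each collapse to $1-q^m$, and your remark that the edge cases $r=0$, $r=m$ (where one term vanishes by the convention ${m-1 \brack -1}_q={m-1\brack m}_q=0$) must be checked separately is exactly the right amount of care, since the telescoping relations $(q;q)_r=(1-q^r)(q;q)_{r-1}$ and $(q;q)_{m-r}=(1-q^{m-r})(q;q)_{m-r-1}$ only make sense for $1\leq r\leq m-1$. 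The paper itself offers no proof of this proposition; it simply records these classical $q$-Pascal recurrences with a citation to Gasper--Rahman, so your self-contained algebraic computation is not in conflict with anything in the text and is the standard argument one would find in that reference. The only thing your write-up adds beyond the paper is the explicit bookkeeping, which is harmless; presenting it as the two-line computation you describe would be entirely appropriate.
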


The following lemma will help us to obtain the desired $q$-difference equation.
\begin{lemma}
\label{conj}
For $1 \leq k \leq r$, we have
\begin{equation}
\label{eq}
\begin{aligned}
\prod_{i=1}^{k-1} &\left(1-dxu_i\right) f_{0_{u_1}}(x) = f_{0_{u_k}}(x) 
\\ + \sum_{i=1}^{k-1} &\left( \sum_{m=0}^{k-i-1} d^m \sum_{\substack{1 \leq j < 2^{k-1} \\ w(\tilde{u}_j)=i+m}} x u_1^{\epsilon_1(j)} \cdots u_{k-1}^{\epsilon_{k-1}(j)} \left( (-x)^{m-1} {i+m-1 \brack m-1}_{q} \right. \right. \\
& \left. \vphantom{\sum_{\substack{1 \leq j < 2^{k-1} \\ w(\tilde{u}_j)=i+m}}} \left. + (-x)^m {i+m \brack m}_{q} \right) \right) \times \prod_{h=1}^{i-1} \left(1-xq^{h}\right) f_{0_{u_1}}\left(xq^{i}\right).
\end{aligned}
\end{equation}
\end{lemma}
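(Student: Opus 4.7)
The proof proceeds by induction on $k$. For $k=1$, both the product $\prod_{i=1}^{0}(1-dxu_i)$ and the sum $\sum_{i=1}^{0}$ are empty, so \eqref{eq} reduces to the tautology $f_{0_{u_1}}(x)=f_{0_{u_1}}(x)$. As a sanity check, for $k=2$ only the term $i=1,\,m=0,\,j=1$ survives on the right, giving $f_{0_{u_2}}(x)+xu_1 f_{0_{u_1}}(xq)$, which matches $(1-dxu_1)f_{0_{u_1}}(x)$ by \eqref{eq3.7}.

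For the inductive step, write $S_{k-1}(x)$ for the sum on the right of \eqref{eq} at level $k-1$, so the hypothesis reads $\prod_{i=1}^{k-2}(1-dxu_i)f_{0_{u_1}}(x) = f_{0_{u_{k-1}}}(x)+S_{k-1}(x)$. I would multiply by $(1-dxu_{k-1})$ and use \eqref{eq3.7} to replace $(1-dxu_{k-1})f_{0_{u_{k-1}}}(x)$, yielding
\begin{align*}
\prod_{i=1}^{k-1}(1-dxu_i)f_{0_{u_1}}(x) &= f_{0_{u_k}}(x) + q^{-1}u_{k-1}f_{0_{u_1}}(xq)\\
&\quad - q^{-1}u_{k-1}(1-xq)f_{0_{u_{k-1}}}(xq) + (1-dxu_{k-1})S_{k-1}(x).
\end{align*}
The remaining $f_{0_{u_{k-1}}}(xq)$ is then expanded via the induction hypothesis at $x \mapsto xq$ into $\prod_{i=1}^{k-2}(1-dxqu_i)f_{0_{u_1}}(xq) - S_{k-1}(xq)$, so that the right-hand side becomes $f_{0_{u_k}}(x)$ plus a linear combination of $f_{0_{u_1}}(xq^i)$ for $1 \leq i \leq k-1$.

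The main obstacle is to verify that the combined coefficient of each $f_{0_{u_1}}(xq^i)$ matches the claim. The target sum is over subsets $S'\subseteq\{u_1,\ldots,u_{k-1}\}$ of size $i+m$, which I would split according to whether $u_{k-1}\in S'$. Subsets with $u_{k-1}\notin S'$ come unchanged from the $1$-part of $(1-dxu_{k-1})S_{k-1}(x)$ and reproduce the $(k-1)$-level formula verbatim. Subsets with $u_{k-1}\in S'$ receive contributions from the $-dxu_{k-1}$ part of $(1-dxu_{k-1})S_{k-1}(x)$ (at level $i$, $d$-exponent $m-1$), from $q^{-1}u_{k-1}(1-xq)S_{k-1}(xq)$ at shifted index $i-1$ (where the identity $(1-xq)\prod_{h=1}^{i-2}(1-xq^{h+1})=\prod_{h=1}^{i-1}(1-xq^h)$ supplies the required product), and, for $i=1$ only, additionally from $q^{-1}u_{k-1}f_{0_{u_1}}(xq)$ and from the constant-in-$d$ pieces of $-q^{-1}u_{k-1}(1-xq)\prod_{i=1}^{k-2}(1-dxqu_i)f_{0_{u_1}}(xq)$. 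The central algebraic step is to combine the $q$-binomial coefficients from these pieces into the bracket $(-x)^{m-1}{i+m-1 \brack m-1}_q + (-x)^m {i+m \brack m}_q$; this is where the $q$-Pascal identity \eqref{pascal1} of Proposition~\ref{pascal} is applied twice, once to collapse ${i+m-1 \brack m-1}_q + q^m{i+m-1 \brack m}_q = {i+m \brack m}_q$ and once to collapse ${i+m-2 \brack m-2}_q + q^{m-1}{i+m-2 \brack m-1}_q = {i+m-1 \brack m-1}_q$, absorbing precisely the powers of $q^{-1}$ introduced by \eqref{eq3.7} against the powers of $q$ produced by the substitution $x \mapsto xq$.
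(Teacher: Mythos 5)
Your proposal follows essentially the same route as the paper's proof: induction on $k$ with the trivial base case, multiplication of the induction hypothesis by the new factor, substitution of \eqref{eq3.7} and of the induction hypothesis at $xq$ for $f_{0_{u_{k-1}}}(xq)$, a split of the colour sums according to whether the new primary colour appears, and exactly the two applications of the $q$-Pascal identity \eqref{pascal1} that you name. One small bookkeeping inaccuracy: at level $i=1$ it is not only the constant-in-$d$ piece of $-q^{-1}u_{k-1}(1-xq)\prod_{i=1}^{k-2}(1-dxqu_i)\,f_{0_{u_1}}(xq)$ that contributes; the paper expands the full combination $q^{-1}u_{k-1}\bigl(1-(1-xq)\prod_{i=1}^{k-2}(1-dxqu_i)\bigr)f_{0_{u_1}}(xq)$, and its terms of every $d$-degree are needed — they supply precisely the $q^{m-1}$ and $q^{m}$ summands that your two Pascal collapses require in the case $i=1$ — so when you carry out the coefficient check this piece must be kept in full.
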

\begin{proof}
The proof relies on an induction on $k$.
For $k=1$,~\eqref{eq} reduces to the trivial equation $f_{0_{u_1}}(x) = f_{0_{u_1}}(x).$
Now assume that~\eqref{eq} is true for some $1 \leq k \leq r-1$ and show it is also true for $k+1$.
Let us define
\begin{align*}
s_k(x) : = \sum_{i=1}^{k-1} &\left( \sum_{m=0}^{k-i-1} d^m \sum_{\substack{1 \leq j < 2^{k-1} \\ w(\tilde{u}_j)=i+m}} x u_1^{\epsilon_1(j)} \cdots u_{k-1}^{\epsilon_{k-1}(j)} \left( (-x)^{m-1} {i+m-1 \brack m-1}_{q} \right. \right. \\
& \left. \vphantom{\sum_{\substack{1 \leq j < 2^{k-1} \\ w(\tilde{u}_j)=i+m}}} \left. + (-x)^m {i+m \brack m}_{q} \right) \right) \times \prod_{h=1}^{i-1} \left(1-xq^{h}\right) f_{0_{u_1}}\left(xq^{i}\right).
\end{align*}
We want to show that
$$\prod_{i=1}^{k} \left(1-dxu_i\right) f_{0_{u_1}}(x) = f_{0_{u_{k+1}}}(x)  + s_{k+1}(x).$$
One has
\begin{align*}
\prod_{i=1}^{k} &\left(1-dxu_i\right) f_{0_{u_1}}(x) - f_{0_{u_{k+1}}}(x)
\\=& \left(1-dxu_k\right) \left( \prod_{i=1}^{k-1} \left(1-dxu_i\right) f_{0_{u_1}}(x) - f_{0_{u_k}}(x) \right)
\\&+ \left(1-dxu_k\right) f_{0_{u_k}}(x) - f_{0_{u_{k+1}}}(x)
\\=&\left(1-dxu_k\right) s_k(x)+ q^{-1}u_k  f_{0_{u_1}}(xq) -q^{-1}u_k\left(1-xq \right) f_{0_{u_k}}(xq),
\end{align*}
where we used the induction hypothesis and~\eqref{eq3.7} in the last equality.
Then
\begin{align*}
\prod_{i=1}^{k} &\left(1-dxu_i\right) f_{0_{u_1}}(x) - f_{0_{u_{k+1}}}(x)
\\=&\left(1-dxu_k\right) s_k(x)+ q^{-1}u_k  f_{0_{u_1}}(xq)
\\&-q^{-1}u_k\left(1-xq \right) \left( \prod_{i=1}^{k-1} \left(1-dxqu_i\right) f_{0_{u_1}}(xq) -s_k\left(xq\right)\right)
\\=& \left(1-dxu_k\right) s_k(x)+q^{-1}u_k\left(1-xq \right) s_k\left(xq\right)
\\&+ q^{-1}u_k \left( 1- \left(1-xq \right) \prod_{i=1}^{k-1}\left(1-dxqu_i\right)\right) f_{0_{u_1}}(xq)
\\=& \left(1-dxu_k\right) s_k(x)+q^{-1}u_k\left(1-xq \right) s_k\left(xq\right)
\\&+ q^{-1}u_k \left( 1- \left(1-xq \right) \left( 1 + \sum_{m=1}^{k-1}\sum_{\substack{1 \leq j < 2^{k-1} \\ w(\tilde{u}_j)=m}} (-dxq)^m u_1^{\epsilon_1(j)} \cdots u_{k-1}^{\epsilon_{k-1}(j)}\right)\right) f_{0_{u_1}}(xq)
\\=&\left(1-dxu_k\right) s_k(x)+q^{-1}u_k\left(1-xq \right) s_k\left(xq\right)
\\&+ q^{-1}u_k \left( xq + \sum_{m=1}^{k-1} d^m \sum_{\substack{1 \leq j < 2^{k-1} \\ w(\tilde{u}_j)=m}} xq u_1^{\epsilon_1(j)} \cdots u_{k-1}^{\epsilon_{k-1}(j)} \left( (-xq)^{m-1} + (-xq)^m\right)\right) f_{0_{u_1}}(xq)
\\=&\left(1-dxu_k\right) s_k(x)+q^{-1}u_k\left(1-xq \right) s_k\left(xq\right)
\\&+\left( xu_k + \sum_{m=1}^{k-1} d^m \sum_{\substack{2^{k-1} < j < 2^{k} \\ w(\tilde{u}_j)=m+1}} x u_1^{\epsilon_1(j)} \cdots u_{k-1}^{\epsilon_{k-1}(j)} u_k \left( (-xq)^{m-1} + (-xq)^m\right)\right) f_{0_{u_1}}(xq).
\end{align*}
Expanding and replacing $s_k$ by its definition, we get
\begin{align*}
\prod_{i=1}^{k} &\left(1-dxu_i\right) f_{0_{u_1}}(x) - f_{0_{u_{k+1}}}(x)
\\=& \sum_{i=1}^{k-1} \left( \sum_{m=0}^{k-i-1} d^m \sum_{\substack{1 \leq j < 2^{k-1} \\ w(\tilde{u}_j)=i+m}} x u_1^{\epsilon_1(j)} \cdots u_{k-1}^{\epsilon_{k-1}(j)} \left( (-x)^{m-1} {i+m-1 \brack m-1}_{q} \right. \right.\\
&\qquad  \left. \vphantom{ \sum_{\substack{1 \leq j < 2^{k-1} \\ w(\tilde{u}_j)=i+m}}} \left.  + (-x)^m {i+m \brack m}_{q} \right) \right) \prod_{h=1}^{i-1} \left(1-xq^{h}\right)f_{0_{u_1}}(xq^i)
\\+& \sum_{i=1}^{k-1} \left( \sum_{m=0}^{k-i-1} d^{m+1} \sum_{\substack{1 \leq j < 2^{k-1} \\ w(\tilde{u}_j)=i+m}} x u_1^{\epsilon_1(j)} \cdots u_{k-1}^{\epsilon_{k-1}(j)} u_k \left( (-x)^{m} {i+m-1 \brack m-1}_{q} \right. \right.\\
&\qquad  \left. \vphantom{ \sum_{\substack{1 \leq j < 2^{k-1} \\ w(\tilde{u}_j)=i+m}}} \left.  + (-x)^{m+1} {i+m \brack m}_{q} \right) \right) \prod_{h=1}^{i-1} \left(1-xq^{h}\right)f_{0_{u_1}}(xq^i)
\\+& q^{-1}u_k\left(1-xq \right) \times
\\&\sum_{i=1}^{k-1} \left( \sum_{m=0}^{k-i-1} d^m \sum_{\substack{1 \leq j < 2^{k-1} \\ w(\tilde{u}_j)=i+m}} xq u_1^{\epsilon_1(j)} \cdots u_{k-1}^{\epsilon_{k-1}(j)} \left( (-xq)^{m-1} {i+m-1 \brack m-1}_{q} \right. \right.\\
&\qquad  \left. \vphantom{ \sum_{\substack{1 \leq j < 2^{k-1} \\ w(\tilde{u}_j)=i+m}}} \left.  + (-xq)^m {i+m \brack m}_{q} \right) \right) \prod_{h=1}^{i-1} \left(1-xq^{h+1}\right)f_{0_{u_1}}(xq^{i+1})
\\+&\left( xu_k + \sum_{m=1}^{k-1} d^m \sum_{\substack{2^{k-1} < j < 2^{k} \\ w(\tilde{u}_j)=m+1}} x u_1^{\epsilon_1(j)} \cdots u_{k-1}^{\epsilon_{k-1}(j)} u_k \left( (-xq)^{m-1} + (-xq)^m\right)\right) f_{0_{u_1}}(xq)
\\=& \sum_{i=1}^{k-1} \left( \sum_{m=0}^{k-i-1} d^m \sum_{\substack{1 \leq j < 2^{k-1} \\ w(\tilde{u}_j)=i+m}} x u_1^{\epsilon_1(j)} \cdots u_{k-1}^{\epsilon_{k-1}(j)} \left( (-x)^{m-1} {i+m-1 \brack m-1}_{q} \right. \right.\\
&\qquad  \left. \vphantom{ \sum_{\substack{1 \leq j < 2^{k-1} \\ w(\tilde{u}_j)=i+m}}} \left.  + (-x)^m {i+m \brack m}_{q} \right) \right) \prod_{h=1}^{i-1} \left(1-xq^{h}\right)f_{0_{u_1}}(xq^i)
\\+& \sum_{i=1}^{k-1} \left( \sum_{m=1}^{k-i} d^{m} \sum_{\substack{2^{k-1} < j < 2^{k} \\ w(\tilde{u}_j)=i+m}} x u_1^{\epsilon_1(j)} \cdots u_{k-1}^{\epsilon_{k-1}(j)} u_k \left( (-x)^{m-1} {i+m-2 \brack m-2}_{q} \right. \right.\\
&\qquad  \left. \vphantom{ \sum_{\substack{1 \leq j < 2^{k-1} \\ w(\tilde{u}_j)=i+m}}} \left.  + (-x)^{m} {i+m-1 \brack m-1}_{q} \right) \right) \prod_{h=1}^{i-1} \left(1-xq^{h}\right)f_{0_{u_1}}(xq^i)
\\+&\sum_{i=2}^{k} \left( \sum_{m=0}^{k-i} d^m \sum_{\substack{2^{k-1} < j < 2^{k} \\ w(\tilde{u}_j)=i+m}} x u_1^{\epsilon_1(j)} \cdots u_{k-1}^{\epsilon_{k-1}(j)} u_k \left( (-xq)^{m-1} {i+m-2 \brack m-1}_{q} \right. \right.\\
&\qquad  \left. \vphantom{ \sum_{\substack{1 \leq j < 2^{k-1} \\ w(\tilde{u}_j)=i+m}}} \left.  + (-xq)^m {i+m-1 \brack m}_{q} \right) \right) \prod_{h=1}^{i-1} \left(1-xq^{h}\right)f_{0_{u_1}}(xq^{i})
\\+&\left( xu_k + \sum_{m=1}^{k-1} d^m \sum_{\substack{2^{k-1} < j < 2^{k} \\ w(\tilde{u}_j)=m+1}} x u_1^{\epsilon_1(j)} \cdots u_{k-1}^{\epsilon_{k-1}(j)} u_k \left( (-xq)^{m-1} + (-xq)^m\right)\right) f_{0_{u_1}}(xq)
\\=& \sum_{i=1}^{k-1} \left( \sum_{m=0}^{k-i-1} d^m \sum_{\substack{1 \leq j < 2^{k-1} \\ w(\tilde{u}_j)=i+m}} x u_1^{\epsilon_1(j)} \cdots u_{k-1}^{\epsilon_{k-1}(j)} \left( (-x)^{m-1} {i+m-1 \brack m-1}_{q} \right. \right.\\
&\qquad  \left. \vphantom{ \sum_{\substack{1 \leq j < 2^{k-1} \\ w(\tilde{u}_j)=i+m}}} \left.  + (-x)^m {i+m \brack m}_{q} \right) \right) \prod_{h=1}^{i-1} \left(1-xq^{h}\right)f_{0_{u_1}}(xq^i)
\\+& \sum_{i=1}^{k-1} \left( \sum_{m=1}^{k-i} d^{m} \sum_{\substack{2^{k-1} < j < 2^{k} \\ w(\tilde{u}_j)=i+m}} x u_1^{\epsilon_1(j)} \cdots u_{k-1}^{\epsilon_{k-1}(j)} u_k \left( (-x)^{m-1} {i+m-2 \brack m-2}_{q} \right. \right.\\
&\qquad  \left. \vphantom{ \sum_{\substack{1 \leq j < 2^{k-1} \\ w(\tilde{u}_j)=i+m}}} \left.  + (-x)^{m} {i+m-1 \brack m-1}_{q} \right) \right) \prod_{h=1}^{i-1} \left(1-xq^{h}\right)f_{0_{u_1}}(xq^i)
\\+&\sum_{i=1}^{k} \left( \sum_{m=0}^{k-i} d^m \sum_{\substack{2^{k-1} \leq j < 2^{k} \\ w(\tilde{u}_j)=i+m}} x u_1^{\epsilon_1(j)} \cdots u_{k-1}^{\epsilon_{k-1}(j)} u_k \left( (-xq)^{m-1} {i+m-2 \brack m-1}_{q} \right. \right.\\
&\qquad  \left. \vphantom{ \sum_{\substack{1 \leq j < 2^{k-1} \\ w(\tilde{u}_j)=i+m}}} \left.  + (-xq)^m {i+m-1 \brack m}_{q} \right) \right) \prod_{h=1}^{i-1} \left(1-xq^{h}\right)f_{0_{u_1}}(xq^{i})\\
=& \sum_{i=1}^{k-1} \left[ \sum_{\substack{1 \leq j <2^k \\ w(\tilde{u}_j)=i}}\right.  x u_1^{\epsilon_1(j)} \cdots u_{k-1}^{\epsilon_{k-1}(j)} u_k^{\epsilon_k(j)}
\\&\qquad + \sum_{m=1}^{k-i} d^m  \left(  \sum_{\substack{1 \leq j <2^{k-1} \\ w(\tilde{u}_j)=i+m}} x u_1^{\epsilon_1(j)} \cdots u_{k-1}^{\epsilon_{k-1}(j)} \right. \\
&\qquad \qquad \qquad \qquad \quad \times \left. \left( (-x)^{m-1} {i+m-1 \brack m-1}_{q} + (-x)^m {i+m \brack m}_{q} \right) \right.
\\&\qquad \qquad \qquad \quad +\sum_{\substack{2^{k-1} < j <2^k \\ w(\tilde{u}_j)=i+m}} x u_1^{\epsilon_1(j)} \cdots u_{k-1}^{\epsilon_{k-1}(j)} u_k
\\ &\qquad \qquad\qquad \qquad \qquad \times \Bigg( (-x)^{m-1} \left({i+m-2 \brack m-2}_{q}+ q^{(m-1)}{i+m-2 \brack m-1}_{q} \right) 
\\& \qquad \qquad \qquad \qquad \qquad \quad +\left.\left. (-x)^{m} \left({i+m-1 \brack m-1}_{q} + q^{m} {i+m-1 \brack m}_{q} \right) \Bigg) \vphantom{  \sum_{\substack{1 \leq j <2^k \\ w(\tilde{u}_j)=i}}} \right) \right]
\\&\times  \prod_{h=1}^{i-1} \left(1-xq^{h}\right)f_{0_{u_1}}(xq^{i})
\\&+ xu_1 \cdots u_k \prod_{h=1}^{k-1} \left(1-xq^{h}\right) f_{0_{u_1}}(xq^{k}).
\end{align*}
Then by the first $q$-analogue of Pascal's triangle~\eqref{pascal1},
\begin{align*}
\prod_{i=1}^{k} &\left(1-dxu_i\right) f_{0_{u_1}}(x) - f_{0_{u_{k+1}}}(x)
\\=&\sum_{i=1}^{k} \left( \sum_{m=0}^{k-i} d^m   \sum_{\substack{1 \leq j <2^{k} \\ w(\tilde{u}_j)=i+m}} x u_1^{\epsilon_1(j)} \cdots u_{k-1}^{\epsilon_{k-1}(j)}u_{k}^{\epsilon_{k}(j)} \right. \\
&\qquad \qquad \qquad \qquad \quad \times \left. \left( (-x)^{m-1} {i+m-1 \brack m-1}_{q} + (-x)^m {i+m \brack m}_{q} \right) \vphantom{ \sum_{\substack{1 \leq j <2^{k} \\ w(\tilde{u}_j)=i+m}}} \right)
\\ &\qquad \quad \times  \prod_{h=1}^{i-1} \left(1-xq^{h}\right)f_{0_{u_1}}(xq^{i})
\\=&s_{k+1}(x).
\end{align*}
This completes the proof.
\end{proof}

Starting from Equation~\eqref{eq} for $k=r$, using~\eqref{eq3.7bis} and doing exactly the same computations as above, we obtain the following :
\begin{equation}
\label{eqbis}
\begin{aligned}
\prod_{i=1}^{r} &\left(1-dxu_i\right) f_{0_{u_1}}(x) = f_{1_{u_1}}(x) 
\\ + \sum_{i=1}^{r} &\left( \sum_{m=0}^{r-i} d^m \sum_{\substack{1 \leq j < 2^{r} \\ w(\tilde{u}_j)=i+m}} x u_1^{\epsilon_1(j)} \cdots u_{r}^{\epsilon_{r}(j)} \left( (-x)^{m-1} {i+m-1 \brack m-1}_{q} \right. \right. \\
& \left. \vphantom{\sum_{\substack{1 \leq j < 2^{r} \\ w(\tilde{u}_j)=i+m}}} \left. + (-x)^m {i+m \brack m}_{q} \right) \right) \times \prod_{h=1}^{i-1} \left(1-xq^{h}\right) f_{0_{u_1}}\left(xq^{i}\right).
\end{aligned}
\end{equation}
Finally, using~\eqref{eqf2}, we obtain the desired $q$-difference equation.
\begin{equation}
\label{qdiff}
\tag{$\mathrm{eq}_{r}$}
\begin{aligned}
\prod_{i=1}^{r} &\left(1-dxu_i\right) f_{0_{u_1}}(x) = f_{0_{u_1}}(xq) 
\\ + \sum_{i=1}^{r} &\left( \sum_{m=0}^{r-i} d^m \sum_{\substack{1 \leq j < 2^{r} \\ w(\tilde{u}_j)=i+m}} x u_1^{\epsilon_1(j)} \cdots u_{r}^{\epsilon_{r}(j)} \left( (-x)^{m-1} {i+m-1 \brack m-1}_{q} \right. \right. \\
& \left. \vphantom{\sum_{\substack{1 \leq j < 2^{r} \\ w(\tilde{u}_j)=i+m}}} \left. + (-x)^m {i+m \brack m}_{q} \right) \right) \times \prod_{h=1}^{i-1} \left(1-xq^{h}\right) f_{0_{u_1}}\left(xq^{i}\right).
\end{aligned}
\end{equation}

\subsection{The induction}
Recall we want to find an expression for $f_{0_{u_1}}(1)$, which is the generating function for overpartitions counted by  $E(\ell_1, \dots, \ell_r;k,n)$. We do so by proving the following theorem by induction on $r$.

\begin{theorem}
\label{main}
Let $r$ be a positive integer. Then for every function $f$ satisfying the $q$-difference equation $(\mathrm{eq}_{r})$ and the initial condition $f(0)=1$, we have
$$f(1)= \prod_{k=1}^r \frac{(-u_k;q)_{\infty}}{(du_k;q)_{\infty}}.$$
\end{theorem}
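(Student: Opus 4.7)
The plan is to prove Theorem~\ref{main} by induction on $r$, following the standard strategy for $q$-difference-equation arguments: first establish uniqueness of the formal power series solution to $(\mathrm{eq}_r)$ with $f(0) = 1$, and then exhibit such a solution in the form of the claimed infinite product.

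For uniqueness, I expand $f(x) = \sum_{n \geq 0} a_n x^n$ and collect coefficients of $x^n$ on both sides of $(\mathrm{eq}_r)$. On the left, the factor $\prod_{i=1}^r (1 - dxu_i)$ contributes $a_n + L_n(a_0, \dots, a_{n-1})$ for a polynomial $L_n$ with coefficients in $\Z[d, u_1, \dots, u_r]$. On the right, $f(xq)$ contributes $q^n a_n$, and every other term is multiplied by one of the inner double sums $S_i(x)$; since each summand of $S_i(x)$ carries an explicit factor of $x$, one has $S_i(0) = 0$, so for $i \geq 1$ the $x^n$ coefficient of the $i$-th term depends only on $a_0, \dots, a_{n-1}$. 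Matching coefficients yields a recursion $(1 - q^n)\, a_n = R_n(a_0, \dots, a_{n-1})$ which, together with $a_0 = 1$, determines $f$ uniquely as a formal power series in $x$.

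The base case $r = 1$ is immediate: $(\mathrm{eq}_1)$ collapses to $(1 - dxu_1)\, f(x) = (1 + xu_1)\, f(xq)$, and iterating this identity (valid for $|q| < 1$) together with $f(0) = 1$ gives $f(x) = (-xu_1;q)_\infty / (dxu_1;q)_\infty$, whose specialisation at $x = 1$ is the claimed formula.

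For the inductive step, assume the result holds in degree $r-1$ and set
$$F_r(x) \;:=\; \prod_{k=1}^{r} \frac{(-xu_k; q)_\infty}{(dxu_k; q)_\infty}.$$
By the uniqueness step it suffices to verify that $F_r$ itself satisfies $(\mathrm{eq}_r)$. Using the identity $F_r(xq^i)/F_r(x) = \prod_{k=1}^r (dxu_k; q)_i / (-xu_k; q)_i$ and observing that the inner sum $\sum_j u_1^{\epsilon_1(j)} \cdots u_r^{\epsilon_r(j)}$ over $j$ with $w(\tilde{u}_j) = m$ is the elementary symmetric polynomial $e_m(u_1, \dots, u_r)$, the required equation reduces, after clearing the common factor $\prod_i(1 - dxu_i)$, to a purely algebraic identity of the schematic form
$$\prod_{k=1}^r (1 + xu_k) \;-\; 1 \;=\; \sum_{i=1}^r (xq; q)_{i-1}\, S_i(x) \prod_{k=1}^r \frac{(dxqu_k; q)_{i-1}}{(-xqu_k; q)_{i-1}}.$$
I would prove this identity by a second induction on $r$, splitting the left-hand side as $\bigl[\prod_{k=1}^{r-1}(1+xu_k)-1\bigr] + xu_r \prod_{k=1}^{r-1}(1+xu_k)$ and using the inductive $(r{-}1)$-identity on the bracketed term. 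The right-hand side splits analogously along the partition $\{1, \dots, 2^r - 1\} = \{1, \dots, 2^{r-1}-1\} \sqcup \{2^{r-1}, \dots, 2^r-1\}$ of the colour indices, the second piece being exactly the contribution of the colours $\tilde u_j$ containing $u_r$. The main obstacle will be matching the two sides: this is where the two $q$-analogues of Pascal's triangle from Proposition~\ref{pascal} should enter, exactly as they did in the derivation of $(\mathrm{eq}_r)$ itself (compare the proof of Lemma~\ref{conj}, whose combinatorial structure the present computation essentially mirrors). The bookkeeping is intricate but mechanical, and the $q$-Pascal identities should absorb the cross-terms in $u_r$ arising from the new factor.
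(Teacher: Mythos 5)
Your uniqueness step is fine (extracting the coefficient of $x^n$ in $(\mathrm{eq}_r)$ does give $(1-q^n)A_n = R_n(A_0,\dots,A_{n-1})$, so the formal power series solution with $f(0)=1$ is unique), and the base case $r=1$ agrees with the paper. But the existence half is where the proposal breaks down: the candidate $F_r(x)=\prod_{k=1}^r \frac{(-xu_k;q)_\infty}{(dxu_k;q)_\infty}$ does \emph{not} satisfy $(\mathrm{eq}_r)$ for $r\geq 2$, and the ``schematic identity'' you propose to prove is false. Already for $r=2$, $d=0$, the equation $(\mathrm{eq}_2)$ reads $f(x)=\left(1+x(u_1+u_2)\right)f(xq)+xu_1u_2(1-xq)f(xq^2)$, and your identity reduces to $x(1+xqu_1)(1+xqu_2)=1-xq$, which is not an identity. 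The conceptual reason is that Theorem~\ref{main} only pins down the value $f(1)$; the unique solution of $(\mathrm{eq}_r)$ (which is the combinatorial generating function $f_{0_{u_1}}(x)$) is genuinely different from $F_r(x)$ when $r\geq 2$. For instance, at $r=2$, $d=0$, the single part $0$ coloured $u_1u_2$ contributes $xu_1u_2$ to the true solution, whereas the coefficient of $u_1u_2q^0$ in $F_2(x)$ is $x^2$; the two only agree after setting $x=1$. So ``uniqueness plus plug in the product'' cannot work: by your own uniqueness argument, if $F_r$ satisfied $(\mathrm{eq}_r)$ it would equal $f_{0_{u_1}}(x)$ identically, which it does not.

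This is exactly why the paper's proof is more roundabout. There, one never produces a closed form for $f(x)$: starting from $f$ satisfying $(\mathrm{eq}_r)$ one multiplies by $\prod_{n\ge 0}\frac{1-dxu_rq^n}{1-xq^n}$, converts the resulting $q$-difference equation into a recurrence on the Taylor coefficients $A_n$ (Lemmas~\ref{lemmaF} and~\ref{lemmaA}), shows by a $q$-binomial computation that these coefficients also satisfy the recurrence attached to $(\mathrm{eq}_{r-1})$ after dividing by $\prod_{k=0}^{n-1}(1+u_rq^k)$ (Lemmas~\ref{lemmaA'}--\ref{equalAA'}), undoes the transformations to land on a function satisfying $(\mathrm{eq}_{r-1})$, and finally recovers $f(1)$ from the induction hypothesis via Appell's comparison theorem $\lim_{n\to\infty}A_n=\lim_{x\to 1^-}(1-x)F(x)$. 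If you want to salvage your strategy, you would have to replace the product ansatz by an explicit (multi-sum, $q$-hypergeometric) solution of $(\mathrm{eq}_r)$ whose value at $x=1$ can be summed to the product --- which is a substantially harder task than the identity you propose, and is precisely what the passage through recurrences and Appell's theorem avoids.
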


As in~\cite{Doussegene}, we start from a function satisfying $(\mathrm{eq}_{r})$ and do some transformations to obtain a function satisfying $(\mathrm{eq}_{r-1})$ and be able to use the induction hypothesis.

\begin{lemma}
\label{lemmaF}
Let $f$ and $F$ be two functions such that
$$F(x):= f(x) \prod_{n=0}^{\infty} \frac{1-dxu_rq^n}{1-xq^n}.$$
Then $f(0)=1$ and $f$ satisfies~\eqref{qdiff} if and only if $F(0)=1$ and $F$ satisfies the following $q$-difference equation
\begin{equation}
\label{qdiffF}
\tag{$\mathrm{eq}'_{r}$}
\begin{aligned}
&\left(1 + \sum_{i=1}^{r} (-x)^i \left( d^{i-1} \sum_{\substack{ 1 \leq j < 2^{r-1} \\  w(\tilde{u}_j)=i-1}} u_1^{\epsilon_1(j)} \cdots u_{r-1}^{\epsilon_{r-1}(j)}  +d^i \sum_{\substack{  1 \leq j < 2^{r-1} \\  w(\tilde{u}_j)=i}} u_1^{\epsilon_1(j)} \cdots u_{r-1}^{\epsilon_{r-1}(j)} \right) \right) F(x)
\\&= F\left(xq\right) + \sum_{i=1}^r \sum_{\ell=1}^r \sum_{k=0}^{\min(i-1,\ell-1)}c_{k,i}b_{\ell-k,j} (-1)^{\ell-1}x^{\ell} F\left(xq^{i}\right),
\end{aligned}
\end{equation}
where
$$c_{k,i}:= d^k u_r^k q^{\frac{k(k+1)}{2}} {i-1 \brack k}_{q} ,$$
and
$$b_{m,i}:= \left( d^{m-1} \sum_{\substack{ 1 \leq j < 2^{r}\\  w(\tilde{u}_j)=i+m-1}} u_1^{\epsilon_1(j)} \cdots u_{r}^{\epsilon_{r}(j)} + d^m \sum_{\substack{1 \leq j < 2^{r} \\  w(\tilde{u}_j)=i+m}} u_1^{\epsilon_1(j)} \cdots u_{r}^{\epsilon_{r}(j)} \right) {i+m-1 \brack m-1}_{q}.$$
\end{lemma}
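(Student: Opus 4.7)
The plan is a reversible algebraic substitution: since $F$ and $f$ differ by multiplication by a nonzero formal power series, every rewrite below can be read in both directions. The initial condition is immediate: both infinite products in the definition of $F$ equal $1$ at $x=0$, so $F(0)=f(0)$.

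The core of the argument is to substitute $f(x)=F(x)\,(x;q)_\infty/(dxu_r;q)_\infty$ into $(\mathrm{eq}_{r})$. Writing
$$f(xq^i) = F(xq^i)\,\frac{(x;q)_\infty}{(dxu_r;q)_\infty}\cdot\frac{(dxu_r;q)_i}{(x;q)_i}$$
and using $(x;q)_i=(1-x)\prod_{h=1}^{i-1}(1-xq^h)$, I would cancel the common factor $(x;q)_\infty/(dxu_r;q)_\infty$, multiply through by $(1-x)$, and then divide by the global factor $(1-dxu_r)$ that appears on every term of the right-hand side (via $(dxu_r;q)_i=(1-dxu_r)(dxu_rq;q)_{i-1}$). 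This turns $(\mathrm{eq}_{r})$ into
$$(1-x)\prod_{i=1}^{r-1}(1-dxu_i)\,F(x) = F(xq) + A_1(x)F(xq) + \sum_{i=2}^{r} A_i(x)\,(dxu_rq;q)_{i-1}\,F(xq^i),$$
where $A_i(x)$ denotes the coefficient multiplying $f_{0_{u_1}}(xq^i)$ in the sum of $(\mathrm{eq}_{r})$.

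The remaining task is to identify each side with the corresponding side of $(\mathrm{eq}'_{r})$. For the left-hand side, I would expand $\prod_{i=1}^{r-1}(1-dxu_i)=\sum_{w\geq 0}(-dx)^w S'_w$ (where $S'_w$ collects the monomials $u_1^{\epsilon_1(j)}\cdots u_{r-1}^{\epsilon_{r-1}(j)}$ indexed by $j$ with $w(\tilde{u}_j)=w$, the empty $w=0$ case contributing $1$) and multiply by $(1-x)$; grouping by $(-x)^i$ yields precisely $d^{i-1}S'_{i-1}+d^i S'_i$. For the right-hand side, the $q$-binomial theorem gives $(dxu_rq;q)_{i-1}=\sum_{k=0}^{i-1} c_{k,i}(-x)^k$, the power $q^{k(k+1)/2}$ arising from $(-dxu_rq)^k q^{k(k-1)/2}$; and regrouping $A_i(x)$ by powers of $(-x)$ shows that $A_i(x)=\sum_{\ell\geq 1}b_{\ell,i}(-1)^{\ell-1}x^\ell$, since the two binomials ${i+m-1 \brack m-1}_q$ and ${i+m \brack m}_q$ in the $m$th summand, after multiplying by $x$, contribute to $(-x)^{m}$ and $(-x)^{m+1}$, and these combine to give the defining expression for $b_{\ell,i}$. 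Convolving the two expansions produces $\sum_\ell\sum_k c_{k,i}b_{\ell-k,i}(-1)^{\ell-1}x^\ell$ with the range $0\leq k\leq\min(i-1,\ell-1)$ forced by the vanishing of $c_{k,i}$ for $k\geq i$ and of $b_{m,i}$ for $m\leq 0$; for $i=1$, the fact that $c_{0,1}=1$ makes the stand-alone $F(xq)$ absorb the $i=1$ contribution correctly.

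The main obstacle is this last convolution and index-matching step: tracking the sign conventions, the shifted $q$-binomials ${i-1 \brack k}_q$ versus ${i+m-1 \brack m-1}_q$, and the $q$-powers is mechanical but delicate, and one must recognise the empty-product convention that makes the stated LHS $1+\sum_i(-x)^i(d^{i-1}S'_{i-1}+d^i S'_i)$ equal to $(1-x)\prod_{i=1}^{r-1}(1-dxu_i)$. All remaining steps are direct multiplicative rewrites, which is why the equivalence is proved simultaneously in both directions.
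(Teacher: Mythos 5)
Your proposal is correct and follows essentially the same route as the paper: substitute $f(x)=F(x)(x;q)_\infty/(dxu_r;q)_\infty$ into $(\mathrm{eq}_r)$, cancel the common infinite product and the factors $(1-x)$, $(1-dxu_r)$ to get the intermediate equation with $(dxu_rq;q)_{i-1}F(xq^i)$, then expand the left-hand side by grouping powers of $x$, expand $(dxu_rq;q)_{i-1}=\sum_k c_{k,i}(-x)^k$ via the $q$-binomial theorem, regroup the inner sum as $\sum_\ell b_{\ell,i}(-1)^{\ell-1}x^\ell$, and convolve. Just make sure your $A_i(x)$ is understood as the bracketed sum over $m$ only (not including the factor $\prod_{h=1}^{i-1}(1-xq^h)$, which is precisely what gets replaced by $(dxu_rq;q)_{i-1}$), as your later identities assume.
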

\begin{proof}
Starting from $(\mathrm{eq}_{r})$ and writing $f$ in terms of $F$, we obtain
\begin{align*}
(1-x) &\prod_{i=1}^{r-1} \left(1-dxu_i\right) F(x) = F(xq) 
\\ + \sum_{i=1}^{r} &\left( \sum_{m=0}^{r-i} d^m \sum_{\substack{1 \leq j < 2^{r} \\ w(\tilde{u}_j)=i+m}} x u_1^{\epsilon_1(j)} \cdots u_{r}^{\epsilon_{r}(j)} \left( (-x)^{m-1} {i+m-1 \brack m-1}_{q} \right. \right. \\
& \left. \vphantom{\sum_{\substack{1 \leq j < 2^{r} \\ w(\tilde{u}_j)=i+m}}} \left. + (-x)^m {i+m \brack m}_{q} \right) \right) \times \prod_{h=1}^{i-1} \left(1-dxu_rq^{h}\right)F\left(xq^{i}\right).
\end{align*}
Using the conventions
$$\sum_{\substack{1 \leq j < 2^{r} \\ w(\tilde{u}_j)=n}} u_1^{\epsilon_1(j)} \cdots u_{r}^{\epsilon_{r}(j)} = 0 \ \text{for}\ n > r,$$
and
$$\sum_{\substack{1 \leq j < 2^{r} \\ w(\tilde{u}_j)=0}} x u_1^{\epsilon_1(j)} \cdots u_{r}^{\epsilon_{r}(j)} = 1,$$
together with the $q$-binomial theorem (see for example~\cite{Gasper}), this can ban rewritten as
\begin{align*}
&\left(1 + \sum_{i=1}^{r} (-x)^i \left( d^{i-1} \sum_{\substack{ 1 \leq j < 2^{r-1} \\  w(\tilde{u}_j)=i-1}} u_1^{\epsilon_1(j)} \cdots u_{r-1}^{\epsilon_{r-1}(j)} \right.\right. \\
&\qquad\qquad \qquad \quad \left.\left.+d^i \sum_{\substack{  1 \leq j < 2^{r-1} \\  w(\tilde{u}_j)=i}} u_1^{\epsilon_1(j)} \cdots u_{r-1}^{\epsilon_{r-1}(j)} \right) \right) F(x)= F(xq) 
\\ &+ \sum_{i=1}^{r} \left( \sum_{m=1}^{r-i+1} \left(d^{m-1} \sum_{\substack{ 1 \leq j < 2^{r} \\  w(\tilde{u}_j)=i+m-1}} u_1^{\epsilon_1(j)} \cdots u_{r}^{\epsilon_{r}(j)} \right.\right.\\
&\qquad \qquad \qquad \quad \left. \left. + d^m \sum_{\substack{ 1 \leq j < 2^{r} \\  w(\tilde{u}_j)=i+m}} u_1^{\epsilon_1(j)} \cdots u_{r}^{\epsilon_{r}(j)}\right) {i+m-1 \brack m-1}_{q} (-1)^{m-1} x^m \right)
\\ &\times \left(\sum_{k=0}^{i-1}  d^k (-x)^ku_r^k q^{\frac{k(k-1)}{2}} {i-1 \brack k}_{q}\right) F\left(xq^{i}\right).
\end{align*}

Expanding and noting that $b_{\ell-k,i} = 0$ if $i+\ell-k-1 \geq r$, we obtain~\eqref{qdiffF}.
Moreover, $F(0)=f(0)=1$ and the lemma is proved.
\end{proof}

We can now transform ~\eqref{qdiffF} into a recurrence equation on the coefficients of $F$ as a power series in $x$.
\begin{lemma}
\label{lemmaA}
Let $F$ be a function and $(A_n)_{n \in \N}$ a sequence such that $$F(x) =: \sum_{n=0}^{\infty} A_n x^n.$$
Then $F$ satisfies $(\mathrm{eq}'_{r})$ and the initial condition $F(0)=1$ if and only if $A_0=1$ and $(A_n)_{n \in \N}$ satisfies the following recurrence equation
\begin{equation}
\label{recA}
\tag{$\mathrm{rec}_{r}$}
\begin{aligned}
& \left(1-q^{n}\right) A_n = \sum_{m=1}^{r} \left( d^{m-1} \sum_{\substack{ 1 \leq j < 2^{r-1} \\  w(\tilde{u}_j)=m-1}} u_1^{\epsilon_1(j)} \cdots u_{r-1}^{\epsilon_{r-1}(j)} \right.\\ 
&\left. \qquad  \qquad \qquad \qquad + d^{m}  \sum_{\substack{ 1 \leq j < 2^{r-1} \\  w(\tilde{u}_j)=m}} u_1^{\epsilon_1(j)} \cdots u_{r-1}^{\epsilon_{r-1}(j)} \right.\\
&\qquad \qquad \qquad \qquad+ \left. \sum_{i=1}^r \sum_{k=0}^{\min(i-1,m-1)} c_{k,i} b_{m-k,i}q^{i(n-m)} \right) (-1)^{m+1} A_{n-m}.
\end{aligned}
\end{equation}
\end{lemma}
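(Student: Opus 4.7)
The plan is to prove both directions simultaneously by comparing coefficients of $x^n$, since two formal power series agree if and only if every coefficient does. Writing $F(x) = \sum_{n \geq 0} A_n x^n$, one has $F(xq^i) = \sum_{n \geq 0} q^{in} A_n x^n$ for every $i \geq 0$, so the coefficient of $x^n$ in $x^\ell F(xq^i)$ is simply $q^{i(n-\ell)} A_{n-\ell}$, with the convention $A_m = 0$ for $m < 0$. Moreover $F(0) = A_0$, so the initial condition $F(0) = 1$ is equivalent to $A_0 = 1$. The equivalence of $(\mathrm{eq}'_r)$ and $(\mathrm{rec}_r)$ will therefore come down to a careful bookkeeping of the coefficient of $x^n$ on each side.

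First I would read off the coefficient of $x^n$ on the left-hand side of $(\mathrm{eq}'_r)$: it is
\[
A_n + \sum_{i=1}^{r} (-1)^{i} \left( d^{i-1} \sum_{\substack{1 \leq j < 2^{r-1} \\ w(\tilde{u}_j)=i-1}} u_1^{\epsilon_1(j)} \cdots u_{r-1}^{\epsilon_{r-1}(j)} + d^{i} \sum_{\substack{1 \leq j < 2^{r-1} \\ w(\tilde{u}_j)=i}} u_1^{\epsilon_1(j)} \cdots u_{r-1}^{\epsilon_{r-1}(j)}\right) A_{n-i}.
\]
Next I would read off the coefficient of $x^n$ on the right-hand side: from $F(xq)$ it contributes $q^n A_n$, and from the triple sum it contributes $\sum_{i,\ell,k} c_{k,i} b_{\ell-k,i} (-1)^{\ell-1} q^{i(n-\ell)} A_{n-\ell}$. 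Bringing $q^n A_n$ to the left, the coefficient of $A_n$ becomes $(1-q^n)$, which is precisely the left-hand side of $(\mathrm{rec}_r)$.

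The remaining step is to collect the contributions to $A_{n-m}$ for each $m \in \{1,\dots,r\}$. Relabelling $i \mapsto m$ in the first (LHS-origin) sum turns $(-1)^i$ into $(-1)^{m+1}$ after moving it to the right; relabelling $\ell \mapsto m$ in the second sum turns $(-1)^{\ell-1}$ into $(-1)^{m+1}$ as well. Factoring out this common sign yields exactly the expression displayed in $(\mathrm{rec}_r)$, including the restriction $k \leq \min(i-1,m-1)$ inherited from the summation range of $k$. Since every step is reversible, this establishes the ``if and only if''. The main point requiring care is simply the sign bookkeeping and making sure that the two distinct contributions to the coefficient of $A_{n-m}$ (one from the polynomial on the LHS of $(\mathrm{eq}'_r)$ multiplying $F(x)$, the other from the $x^\ell F(xq^i)$ terms on the RHS) combine with the same overall sign $(-1)^{m+1}$; there is no genuine combinatorial or algebraic obstacle beyond this.
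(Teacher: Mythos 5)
Your proposal is correct and follows essentially the same route as the paper: extract the coefficient of $x^n$ on both sides of $(\mathrm{eq}'_{r})$, move $q^nA_n$ to the left to get $(1-q^n)A_n$, relabel the dummy indices and factor out the common sign $(-1)^{m+1}$, with $F(0)=A_0$ giving the equivalence of initial conditions. Nothing further is needed.
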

\begin{proof}
By the definition of $(A_n)_{n \in \N}$ and~\eqref{qdiffF},
\begin{align*}
\left(1-q^{n}\right) A_n =& \sum_{i=1}^{r} \left( d^{i-1} \sum_{\substack{ 1 \leq j < 2^{r-1} \\  w(\tilde{u}_j)=i-1}} u_1^{\epsilon_1(j)} \cdots u_{r-1}^{\epsilon_{r-1}(j)} \right.\\
&\qquad \left.+ d^{j}\sum_{\substack{ 1 \leq j < 2^{r-1} \\  w(\tilde{u}_j)=i}} u_1^{\epsilon_1(j)} \cdots u_{r-1}^{\epsilon_{r-1}(j)} \right) (-1)^{i+1} A_{n-i}
\\ &+  \sum_{i=1}^r \sum_{\ell=1}^r \sum_{k=0}^{\min(i-1,\ell-1)} c_{k,i} b_{\ell-k,i}q^{i(n-\ell)} (-1)^{\ell+1} A_{n-l}.
\end{align*}
Factorising leads to~\eqref{recA}, and $A_n = F(0)=1$.
\end{proof}

For convenience, we now do transformations starting from $(\mathrm{eq}_{r-1})$.

\begin{lemma}
\label{lemmaG}
Let $g$ and $G$ be two functions such that
$$G(x):= g(x) \prod_{n=0}^{\infty} \frac{1}{1-xq^{n}}.$$
Then $g$ satisfies $(\mathrm{eq}_{r-1})$ and $g(0)=1$ if and only if $G(0)=1$ and $G$ satisfies the following $q$-difference equation
\begin{equation}
\label{qdiffG}
\tag{$\mathrm{eq}''_{r-1}$}
\begin{aligned}
&\left(1 + \sum_{i=1}^{r} \left( d^{i-1} \sum_{\substack{  1 \leq j < 2^{r-1} \\  w(\tilde{u}_j)=i-1}} u_1^{\epsilon_1(j)} \cdots u_{r-1}^{\epsilon_{r-1}(j)} \right.\right. \\
&\qquad \qquad \quad \left.\left. +d^i \sum_{\substack{ 1 \leq j < 2^{r-1} \\  w(\tilde{u}_j)=i}} u_1^{\epsilon_1(j)} \cdots u_{r-1}^{\epsilon_{r-1}(j)} \right) (-x)^i \right) G(x)\\
&= G\left(xq\right) + \sum_{i=1}^r \sum_{m=1}^{r-i} \left(d^{m-1} \sum_{\substack{  1 \leq j < 2^{r-1} \\  w(\tilde{u}_j)=i+m-1}} u_1^{\epsilon_1(j)} \cdots u_{r-1}^{\epsilon_{r-1}(j)} \right.
\\& \qquad \qquad \left.+d^m \sum_{\substack{ 1 \leq j < 2^{r-1} \\  w(\tilde{u}_j)=i+m}} u_1^{\epsilon_1(j)} \cdots u_{r-1}^{\epsilon_{r-1}(j)} \right) {i+m-1 \brack m-1}_{q} (-1)^{m+1} x^m  G\left(xq^{i}\right).
\end{aligned}
\end{equation}
\end{lemma}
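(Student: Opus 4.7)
The plan is to mirror the proof of Lemma~\ref{lemmaF} in its simpler form: since the ``conjugating'' factor here is just $(x;q)_\infty^{-1}$ (with no $d$-dependent numerator), we will not even need the $q$-binomial theorem, only the telescoping identity $(xq;q)_\infty=(x;q)_\infty/(1-x)$.

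First I would substitute $g(x)=G(x)\,(x;q)_\infty$ into $(\mathrm{eq}_{r-1})$. Observing that $g(xq)=G(xq)\,(x;q)_\infty/(1-x)$ and, for every $i\ge 1$,
\[
\prod_{h=1}^{i-1}(1-xq^{h})\,g(xq^{i}) \;=\; G(xq^{i})\,(xq;q)_{i-1}\,(xq^{i};q)_\infty \;=\; G(xq^{i})\,\frac{(x;q)_\infty}{1-x},
\]
every term on the right-hand side of $(\mathrm{eq}_{r-1})$ contains the same factor $(x;q)_\infty/(1-x)$. The left-hand side is $\prod_{i=1}^{r-1}(1-dxu_i)\,G(x)\,(x;q)_\infty$, which also contains $(x;q)_\infty$. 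Multiplying through by $(1-x)/(x;q)_\infty$ thus produces an equation purely in $G$:
\[
(1-x)\prod_{i=1}^{r-1}(1-dxu_i)\,G(x) \;=\; G(xq) + \sum_{i=1}^{r-1} C_i(x)\,G(xq^{i}),
\]
where $C_i(x)$ is exactly the bracketed coefficient appearing inside the sum of $(\mathrm{eq}_{r-1})$.

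Next I would match both sides with the stated form of $(\mathrm{eq}''_{r-1})$. For the left-hand side, expand
\[
\prod_{i=1}^{r-1}(1-dxu_i)\;=\;\sum_{j=0}^{2^{r-1}-1}(-dx)^{w(\tilde{u}_j)}\,u_1^{\epsilon_1(j)}\cdots u_{r-1}^{\epsilon_{r-1}(j)},
\]
multiply by $(1-x)$, and collect powers of $(-x)^i$: the $(-x)^i$ coefficient is precisely
\[
d^{\,i-1}\!\sum_{w(\tilde{u}_j)=i-1}\!u_1^{\epsilon_1(j)}\cdots u_{r-1}^{\epsilon_{r-1}(j)} \;+\; d^{\,i}\!\sum_{w(\tilde{u}_j)=i}\!u_1^{\epsilon_1(j)}\cdots u_{r-1}^{\epsilon_{r-1}(j)},
\]
as required, with the convention that the empty sum ($w=0$) equals $1$. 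For each $C_i(x)$, rewrite the two subterms with $(-x)^{m-1}$ and $(-x)^m$ as contributions to a single power $x^{m'}$ and reindex $m'=m$ and $m'=m+1$; after combining, one finds the coefficient of $(-1)^{m'+1}x^{m'}$ is exactly
\[
\Bigl(d^{\,m'-1}\!\!\sum_{w(\tilde{u}_j)=i+m'-1}\!u_1^{\epsilon_1(j)}\cdots u_{r-1}^{\epsilon_{r-1}(j)} + d^{\,m'}\!\!\sum_{w(\tilde{u}_j)=i+m'}\!u_1^{\epsilon_1(j)}\cdots u_{r-1}^{\epsilon_{r-1}(j)}\Bigr){i+m'-1\brack m'-1}_q,
\]
the upper limit being $m'=r-i$ since $w(\tilde{u}_j)\le r-1$ forces the first sum to vanish for $m'=r-i$. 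This reproduces the right-hand side of $(\mathrm{eq}''_{r-1})$.

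For the initial condition, $(0;q)_\infty=1$ gives $G(0)=g(0)$, so $g(0)=1$ iff $G(0)=1$. Every step above is a reversible algebraic manipulation (multiplication by the nowhere-zero formal power series $(x;q)_\infty/(1-x)$), so the equivalence goes both ways. I expect the only real work to be the bookkeeping in the reindexing of $C_i(x)$, which is essentially identical to the computation already carried out in the proof of Lemma~\ref{lemmaF}, and in particular does not require invoking either Pascal identity of Proposition~\ref{pascal}.
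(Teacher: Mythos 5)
Your proposal is correct and follows essentially the same route as the paper: substitute $g(x)=G(x)(x;q)_\infty$ into $(\mathrm{eq}_{r-1})$, use $(xq;q)_{i-1}(xq^{i};q)_\infty=(x;q)_\infty/(1-x)$ to clear the infinite products, and then expand $(1-x)\prod_{i=1}^{r-1}(1-dxu_i)$ and reindex the coefficients $C_i(x)$ by powers of $x$ to reach \eqref{qdiffG}. Your observation that, unlike in Lemma~\ref{lemmaF}, no $q$-binomial theorem or Pascal identity is genuinely needed here (only the bookkeeping of the reindexing) is accurate, even though the paper phrases this step as ``using the $q$-binomial theorem as in the proof of Lemma~\ref{lemmaF}''.
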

\begin{proof}
By the definition of $G$ and $(\mathrm{eq}_{r-1})$, we have
\begin{align*}
&(1-x) \prod_{i=1}^{r-1} \left(1-dxu_i\right) G(x) = G(xq) 
\\&+ \sum_{i=1}^{r-1} \left( \sum_{m=0}^{r-i-1} d^m \sum_{\substack{  1 \leq j < 2^{r-1} \\  w(\tilde{u}_j)=i+m}} x u_1^{\epsilon_1(j)} \cdots u_{r-1}^{\epsilon_{r-1}(j)} \right.\\
& \qquad \qquad \times \left. \vphantom{\sum_{\substack{  1 \leq j < 2^{r-1} \\  w(\tilde{u}_j)=i+m-1}}} \left( (-x)^{m-1} {i+m-1 \brack m-1}_{q} + (-x)^m {i+m \brack m}_{q} \right) \right) G\left(xq^{i}\right).
\end{align*}
Then, using the $q$-binomial theorem as in the proof of Lemma~\ref{lemmaF}, this can be reformulated as~\eqref{qdiffG}, and $G(0)= g(0)=1.$
\end{proof}

Again we want to translate this into a recurrence equation on the coefficients of $G$ written as a power series in the variable $x$.

\begin{lemma}
\label{lemmaa}
Let $G$ be a function and $(a_n)_{n \in \N}$ be a sequence such that
$$G(x) =: \sum_{n=0}^{\infty} a_n x^n.$$
Then $G$ satisfies $(\mathrm{eq}''_{r-1})$ and $G(0)=1$ if and only if $a_0=1$ and $(a_n)_{n \in \N}$ satisfies the following recurrence equation
\begin{equation}
\label{reca}
\tag{$\mathrm{rec''}_{r-1}$}
\begin{aligned}
& \left(1-q^{n}\right) a_n = \sum_{m=1}^{r} \sum_{i=0}^{r-1} \left(d^{m-1} \sum_{\substack{  1 \leq j < 2^{r-1} \\  w(\tilde{u}_j)=i+m-1}}u_1^{\epsilon_1(j)} \cdots u_{r-1}^{\epsilon_{r-1}(j)} \right.\\
&\left. + d^{m} \sum_{\substack{  1 \leq j < 2^{r-1} \\  w(\tilde{u}_j)=i+m}} u_1^{\epsilon_1(j)} \cdots u_{r-1}^{\epsilon_{r-1}(j)} \right) {i+m-1 \brack m-1}_{q} q^{i(n-m)} (-1)^{m+1} a_{n-m}.
\end{aligned}
\end{equation}
\end{lemma}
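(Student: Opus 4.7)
The plan is to do a direct coefficient extraction, mirroring the proof of Lemma~\ref{lemmaA}. Substituting $G(x) = \sum_{n \geq 0} a_n x^n$ into~\eqref{qdiffG} and using $G(xq^i) = \sum_n a_n q^{in} x^n$, one reads off the coefficient of $x^n$ on both sides; the condition $G(0)=1$ corresponds immediately to $a_0=1$, so only the main equivalence requires work.

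On the left-hand side of~\eqref{qdiffG}, the leading $1$ contributes $a_n$, while each summand involving $(-x)^i G(x)$ contributes $(-1)^i a_{n-i}$ times its colour-sum coefficient. On the right-hand side, $G(xq)$ contributes $q^n a_n$, and each term $(-1)^{m+1} x^m G(xq^i)$ contributes $(-1)^{m+1} q^{i(n-m)} a_{n-m}$ times its coefficient. Transporting $q^n a_n$ to the left yields the factor $(1-q^n) a_n$ appearing in~\eqref{reca}. The bookkeeping step is then to merge these two families of contributions into the single double sum over $m \in \{1,\dots,r\}$ and $i \in \{0,\dots,r-1\}$ of the recurrence: setting $i=0$ in~\eqref{reca} and using ${m-1 \brack m-1}_q = 1$ together with $q^{0 \cdot (n-m)} = 1$ recovers exactly the left-hand side contribution of~\eqref{qdiffG} after relabelling $i \mapsto m$, while for $i \geq 1$ the summand matches term by term with the right-hand side double sum of~\eqref{qdiffG}. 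The upper bound $m \leq r-i$ present in~\eqref{qdiffG} is automatic in the recurrence, since $w(\tilde{u}_j) \leq r-1$ for $1 \leq j < 2^{r-1}$ forces the inner colour sums to vanish as soon as $i+m-1 \geq r$.

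For the converse, the recurrence~\eqref{reca} determines $(a_n)_{n \in \N}$ uniquely from $a_0=1$, and the same coefficient-by-coefficient matching, read in reverse, shows that $G(x) = \sum_n a_n x^n$ satisfies~\eqref{qdiffG}. I expect no real obstacle: the calculation is the direct analogue of the passage between~\eqref{qdiffF} and~\eqref{recA} carried out in Lemma~\ref{lemmaA}, and the only genuine care required is sign and index tracking when both sides of~\eqref{qdiffG} are folded into the single sum of~\eqref{reca}.
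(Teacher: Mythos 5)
Your proposal is correct and follows essentially the same route as the paper: substitute $G(x)=\sum_n a_n x^n$ into $(\mathrm{eq}''_{r-1})$, extract the coefficient of $x^n$, combine the $G(x)$ and $G(xq)$ contributions into $(1-q^n)a_n$, identify the remaining left-hand terms with the $i=0$ part of the double sum and the right-hand double sum with the $i\geq 1$ part, and note that the range discrepancies are harmless because the colour sums vanish once $i+m-1\geq r$. Nothing further is needed.
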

\begin{proof}
Plugging the definition of $(a_n)_{n \in \N}$ into~\eqref{qdiffG} gives
\begin{align*}
& \left(1-q^{n}\right) a_n = \sum_{m=1}^{r} \left( d^{m-1} \sum_{\substack{  1 \leq j < 2^{r-1} \\  w(\tilde{u}_j)=m-1}} u_1^{\epsilon_1(j)} \cdots u_{r-1}^{\epsilon_{r-1}(j)} \right.\\
&\left. \qquad \qquad \qquad \qquad \quad + d^{m} \sum_{\substack{  1 \leq j < 2^{r-1} \\  w(\tilde{u}_j)=m}} u_1^{\epsilon_1(j)} \cdots u_{r-1}^{\epsilon_{r-1}(j)} \right) (-1)^{m+1} a_{n-m}
\\ &+ \sum_{m=1}^{r-1} \sum_{i=1}^{r-1} \left(d^{m-1} \sum_{\substack{  1 \leq j < 2^{r-1} \\  w(\tilde{u}_j)=i+m-1}}u_1^{\epsilon_1(j)} \cdots u_{r-1}^{\epsilon_{r-1}(j)} \right.\\
&\left. \qquad \qquad \quad + d^{m} \sum_{\substack{  1 \leq j < 2^{r-1} \\  w(\tilde{u}_j)=i+m}} u_1^{\epsilon_1(j)} \cdots u_{r-1}^{\epsilon_{r-1}(j)} \right) {i+m-1 \brack m-1}_{q} q^{i(n-m)} (-1)^{m+1} a_{n-m}.
\end{align*}
Gathering the sums and noting that $a_n = G(0)=1$ completes the proof.
\end{proof}

We now do a final transformation and obtain a last recurrence equation.

\begin{lemma}
\label{lemmaA'}
Let $(a_n)_{n \in \N}$ and $(A'_n)_{n \in \N}$ be two sequences such that
$$ A'_n := a_n \prod_{k=0}^{n-1} \left( 1 +u_rq^{k} \right).$$
Then $(a_n)_{n \in \N}$ satisfies $(\mathrm{rec}''_{r-1})$ and the initial condition $a_0=1$ if and only if $A'_0=1$ and $(A'_n)_{n \in \N}$ satisfies the following recurrence equation
\begin{equation}
\label{recA'}
\tag{$\mathrm{rec'}_{r-1}$}
\begin{aligned}
\left(1-q^{n}\right) A'_n =& \sum_{m=1}^{r} \left( \sum_{\nu=0}^{r-1} \sum_{\mu=0}^{\min(m-1, \nu)} f_{m,\mu} e_{m,\nu - \mu} q^{\nu (n-m)} \right.
\\&+ \left. u_r \sum_{\nu=1}^{r} \sum_{\mu=0}^{\min(m-1, \nu-1)} f_{m,\mu} e_{m,\nu - \mu -1} q^{\nu (n-m)}\right) (-1)^{m+1}  A'_{n-m},
\end{aligned}
\end{equation}
where
\begin{align*}
e_{m,i} := &\left(d^{m-1} \sum_{\substack{  1 \leq j < 2^{r-1} \\  w(\tilde{u}_j)=i+m-1}}u_1^{\epsilon_1(j)} \cdots u_{r-1}^{\epsilon_{r-1}(j)} \right. \\
&\left. + d^{m} \sum_{\substack{  1 \leq j < 2^{r-1} \\  w(\tilde{u}_j)=i+m}} u_1^{\epsilon_1(j)} \cdots u_{r-1}^{\epsilon_{r-1}(j)} \right) {i+m-1 \brack m-1}_{q},
\end{align*}
and
$$f_{m,k} := u_r^k q^{\frac{k(k+1)}{2}} {m-1 \brack k}_{q}.$$
\end{lemma}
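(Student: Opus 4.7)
The plan is to view the map $a_n \mapsto A'_n = a_n \prod_{k=0}^{n-1}(1+u_r q^k)$ as multiplication by a formal series with invertible constant term, so each step of the calculation below is reversible and the claimed equivalence will follow. The initial condition is immediate: since the product is empty when $n=0$, we have $A'_0 = a_0$, so $A'_0=1$ iff $a_0=1$. For the rest, I would start from $(\mathrm{rec}''_{r-1})$ and substitute
$$a_{n-m} = A'_{n-m} \cdot \frac{\prod_{k=n-m}^{n-1}(1+u_r q^k)}{\prod_{k=0}^{n-1}(1+u_r q^k)},$$
then multiply both sides by $\prod_{k=0}^{n-1}(1+u_r q^k)$, which reduces the problem to expanding the finite product $\prod_{k=n-m}^{n-1}(1+u_r q^k)$ and matching the resulting expression against $(\mathrm{rec'}_{r-1})$.

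The key computation is to write
$$\prod_{k=n-m}^{n-1}(1+u_r q^k) = \prod_{j=0}^{m-1}\bigl(1+(u_r q^{n-m})q^{j}\bigr) = \sum_{k=0}^{m} q^{\binom{k}{2}}{m \brack k}_{q} u_r^k q^{k(n-m)}$$
via the $q$-binomial theorem, and then to apply the $q$-Pascal identity~\eqref{pascal1} in the form ${m \brack k}_q = q^k{m-1 \brack k}_q + {m-1 \brack k-1}_q$. After splitting, the first term contributes $\sum_{k=0}^{m-1} {m-1 \brack k}_q q^{k(n-m)+k+\binom{k}{2}} u_r^k$, and the identity $k+\binom{k}{2}=\frac{k(k+1)}{2}$ shows this is exactly $\sum_{\mu=0}^{m-1} f_{m,\mu}\, q^{\mu(n-m)}$. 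Reindexing the second term by $j=k-1$ gives $u_r \sum_{\mu=0}^{m-1} f_{m,\mu}\, q^{(\mu+1)(n-m)}$ after applying the same identity $\binom{\mu+1}{2}=\frac{\mu(\mu+1)}{2}+\mu$ and shifting the exponent. Thus
$$\prod_{k=n-m}^{n-1}(1+u_r q^k) = \sum_{\mu=0}^{m-1} f_{m,\mu}\, q^{\mu(n-m)} + u_r \sum_{\mu=0}^{m-1} f_{m,\mu}\, q^{(\mu+1)(n-m)}.$$

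To finish, I would multiply this by the factor $\sum_{i\geq 0} e_{m,i}\, q^{i(n-m)}$ coming from $(\mathrm{rec}''_{r-1})$ and collect powers of $q^{\nu(n-m)}$ by setting $\nu=i+\mu$ in the first sum and $\nu=i+\mu+1$ in the second. A small but essential bookkeeping point is that $e_{m,i}=0$ whenever $i>r-m$, because the inner sums over colours $\tilde u_j$ with $j<2^{r-1}$ force $w(\tilde u_j)\leq r-1$; this vanishing is exactly what makes the upper limits $\nu\leq r-1$ and $\nu\leq r$ in $(\mathrm{rec'}_{r-1})$ automatic once the constraints $\mu\leq m-1$ and $i\geq 0$ are imposed. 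Reading off the coefficient of $A'_{n-m}$ then gives the stated recurrence $(\mathrm{rec'}_{r-1})$.

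The main obstacle is purely bookkeeping: tracking the index ranges carefully through the $q$-Pascal splitting and the reindexing $j=k-1$, and verifying that the vanishing $e_{m,i}=0$ for $i>r-m$ together with $\mu\in\{0,\dots,m-1\}$ reproduces exactly the bounds $\min(m-1,\nu)$ and $\min(m-1,\nu-1)$ appearing in $(\mathrm{rec'}_{r-1})$. Apart from this combinatorial care, each algebraic manipulation is elementary and reversible, so the two characterisations of $A'$ in terms of $a$ are equivalent.
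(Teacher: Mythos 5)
Your proposal is correct and follows essentially the same route as the paper: substitute $a_{n-m}$ in terms of $A'_{n-m}$ in $(\mathrm{rec}''_{r-1})$, multiply through by $\prod_{k=0}^{n-1}(1+u_rq^k)$, expand the remaining finite product $\prod_{k=1}^{m}(1+u_rq^{n-k})$ via the $q$-binomial theorem, and collect powers of $q^{n-m}$, using $e_{m,i}=0$ for $i>r-m$ to justify the ranges of $\nu$; the only cosmetic difference is that the paper factors out $(1+u_rq^{n-m})$ and expands the other $m-1$ factors, whereas you expand all $m$ factors and then split with the $q$-Pascal identity~\eqref{pascal1}, which amounts to the same computation. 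One small slip in your prose: $\binom{\mu+1}{2}=\frac{\mu(\mu+1)}{2}$, not $\frac{\mu(\mu+1)}{2}+\mu$, but your displayed expansion of the product is nevertheless the correct one, so nothing downstream is affected.
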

\begin{proof}
Replacing the definition of $(A'_n)_{n \in \N}$ into~\eqref{reca}, we have
\begin{align*}
\left(1-q^{n}\right) A'_n = &\sum_{m=1}^{r} \sum_{i=0}^{r-1} \left(d^{m-1} \sum_{\substack{  1 \leq j < 2^{r-1} \\  w(\tilde{u}_j)=i+m-1}}u_1^{\epsilon_1(j)} \cdots u_{r-1}^{\epsilon_{r-1}(j)} \right.\\
&\left. \qquad \qquad + d^{m} \sum_{\substack{  1 \leq j < 2^{r-1} \\  w(\tilde{u}_j)=i+m}} u_1^{\epsilon_1(j)} \cdots u_{r-1}^{\epsilon_{r-1}(j)} \right)\\
&\times {i+m-1 \brack m-1}_{q} q^{i(n-m)} (-1)^{m+1} \prod_{k=1}^m \left(1 + u_rq^{n-k}\right) A'_{n-m}.
\end{align*}
Furthermore, by a change of variables,
\begin{align*}
\prod_{k=1}^m \left(1 + u_rq^{n-k}\right) &= \prod_{k=0}^{m-1} \left(1 + u_rq^{k+n-m}\right)
\\&= \left( 1+u_rq^{n-m}\right) \prod_{k=1}^{m-1} \left(1 + u_rq^{k+n-m}\right)
\\&= \left( 1+u_rq^{n-m}\right) \sum_{k=0}^{m-1} u_r^kq^{\frac{k(k+1)}{2} +k(n-m)} {m-1 \brack k}_{q},
\end{align*}
where we used the $q$-binomial theorem to obtain the last equality.
Thus
\begin{align*}
&\left(1-q^{n}\right) A'_n = 
\\& \sum_{m=1}^{r} \sum_{i=0}^{r-1} \left(d^{m-1} \sum_{\substack{  1 \leq j < 2^{r-1} \\  w(\tilde{u}_j)=i+m-1}}u_1^{\epsilon_1(j)} \cdots u_{r-1}^{\epsilon_{r-1}(j)} + d^{m} \sum_{\substack{  1 \leq j < 2^{r-1} \\  w(\tilde{u}_j)=i+m}} u_1^{\epsilon_1(j)} \cdots u_{r-1}^{\epsilon_{r-1}(j)} \right)\\
&\times {i+m-1 \brack m-1}_{q} q^{i(n-m)} \left( 1+u_rq^{n-m}\right) \sum_{k=0}^{m-1} u_r^kq^{\frac{k(k+1)}{2} +k(n-m)} {m-1 \brack k}_{q} (-1)^{m+1} A'_{n-m}\\
&= \sum_{m=1}^{r} \left[ \sum_{i=0}^{r-1} \left(d^{m-1} \sum_{\substack{  1 \leq j < 2^{r-1} \\  w(\tilde{u}_j)=i+m-1}}u_1^{\epsilon_1(j)} \cdots u_{r-1}^{\epsilon_{r-1}(j)} + d^{m} \sum_{\substack{  1 \leq j < 2^{r-1} \\  w(\tilde{u}_j)=i+m}} u_1^{\epsilon_1(j)} \cdots u_{r-1}^{\epsilon_{r-1}(j)} \right) \right.\\
&\left.\qquad \qquad \times {i+m-1 \brack m-1}_{q} q^{i(n-m)} \sum_{k=0}^{m-1} u_r^kq^{\frac{k(k+1)}{2} +k(n-m)} {m-1 \brack k}_{q} \right.
\\& \qquad \left. +\sum_{i=0}^{r-1} \left(d^{m-1} \sum_{\substack{  1 \leq j < 2^{r-1} \\  w(\tilde{u}_j)=i+m-1}}u_1^{\epsilon_1(j)} \cdots u_{r-1}^{\epsilon_{r-1}(j)} + d^{m} \sum_{\substack{  1 \leq j < 2^{r-1} \\  w(\tilde{u}_j)=i+m}} u_1^{\epsilon_1(j)} \cdots u_{r-1}^{\epsilon_{r-1}(j)} \right) \right.\\
&\left. \qquad \times u_r {i+m-1 \brack m-1}_{q} q^{(i+1)(n-m)} \sum_{k=0}^{m-1} u_r^kq^{\frac{k(k+1)}{2} +k(n-m)} {m-1 \brack k}_{q} \vphantom{\sum_{\substack{  1 \leq j < 2^{r-1} \\  w(\tilde{u}_j)=i+m-1}}} \right] (-1)^{m+1} A'_{n-m}.
\end{align*}
Therefore
\begin{align*}
\left(1-q^{n}\right) A'_n =\sum_{m=1}^{r} &\left(\sum_{i=0}^{r-1} e_{m,i} q^{i(n-m)} \sum_{k=0}^{m-1} f_{m,k} q^{k(n-m)} \right.
\\&+ \left. u_r \sum_{i=1}^{r} e_{m,i-1} q^{i(n-m)} \sum_{k=0}^{m-1} f_{m,k} q^{k(n-m)} \right) (-1)^{m+1} A'_{n-m}.
\end{align*}
Expanding gives~\eqref{recA'}, and $A'_0 = a_0=1$.
\end{proof}

The key step is now to show that $(A_n)_{n \in \N}$ and $(A'_n)_{n \in \N}$ are equal.

\begin{lemma}
\label{equalAA'}
Let $(A_n)_{n \in \N}$ and $(A'_n)_{n \in \N}$ be defined as in Lemmas~\ref{lemmaA} and~\ref{lemmaA'}.
Then for every $n \in \N$, $A_n = A'_n$.
\end{lemma}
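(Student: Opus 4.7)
The plan is to show that the two recurrences $(\mathrm{rec}_{r})$ and $(\mathrm{rec'}_{r-1})$ are, coefficient by coefficient, the same recurrence. Since each determines a unique sequence from the initial value $A_0 = A'_0 = 1$, the equality $A_n = A'_n$ for all $n$ will follow by a trivial induction.

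Concretely, for each fixed $m \in \{1,\dots,r\}$ I would regard the coefficient of $(-1)^{m+1} A_{n-m}$ on both sides as a polynomial in the variable $q^{n-m}$ and match the coefficient of each power $q^{\nu(n-m)}$ for $\nu = 0, 1, \dots, r$. The $\nu=0$ contribution matches at once from the definitions, since $C_m = d^{m-1}\alpha_{m-1}+d^m\alpha_m = e_{m,0}$ (using the empty-sum convention $\alpha_0=1$ that is implicit in $(\mathrm{eq}'_r)$). For $\nu \ge 1$ the crucial step is the decomposition
$$\beta_N = \alpha_N + u_r\,\alpha_{N-1},$$
which splits each $b_{m-k,\nu}$ into an $\alpha$-part and a $u_r\alpha$-part. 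After this substitution, both sides of the desired coefficient identity become sums of monomials of the form $u_r^s d^{m-1}\alpha_{\nu+m-s-1}$ and $u_r^s d^m \alpha_{\nu+m-s}$.

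Matching the coefficient of $u_r^s d^{m-1}\alpha_{\nu+m-s-1}$ (the other case being entirely analogous) reduces the problem to the single $q$-binomial identity
$$q^s {\nu-1 \brack s}_q {\nu+m-s-1 \brack m-s-1}_q + {\nu-1 \brack s-1}_q {\nu+m-s \brack m-s}_q = {m+\nu-s-1 \brack m-1}_q {m \brack s}_q.$$
This can be verified by writing each $q$-binomial as a ratio of $q$-factorials, pulling out the common denominator $(q;q)_s(q;q)_{\nu-s}(q;q)_{m-s}$, and observing that the combined numerator collapses to $(1-q^\nu)(1-q^m)$ times an overall common factor. Alternatively, one can split ${\nu+m-s \brack m-s}_q$ via the second form of Pascal's triangle (Proposition \ref{pascal}, equation \eqref{pascal2}) and recognise the RHS after reassembling.

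I expect the main obstacle to be the bookkeeping rather than any single hard computation: one has to track the index ranges $0 \le k \le \min(\nu-1,m-1)$ versus $0 \le \mu \le \min(m-1,\nu)$ and $0 \le \mu \le \min(m-1,\nu-1)$ carefully, handle the cutoff $[\nu \le r-1]$ on the first sum of $(\mathrm{rec'}_{r-1})$ (which corresponds to $\alpha_N = 0$ for $N \ge r$), deal with the edge cases $s=0$, $s=m$, $\nu=r$, and keep the $\alpha_0 = 1$ convention consistent throughout. Once this is set up the verification is mechanical, and the proof of the lemma is complete.
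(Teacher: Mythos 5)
Your proposal is correct, and its skeleton is the same as the paper's: you compare the coefficient of $(-1)^{m+1}A_{n-m}$ in $(\mathrm{rec}_{r})$ with that of $(-1)^{m+1}A'_{n-m}$ in $(\mathrm{rec'}_{r-1})$, dispose of the $\nu=0$ term directly, and split the colour sums over all $r$ primary colours into the part avoiding $u_r$ plus $u_r$ times the part of weight one less --- exactly the paper's splitting ``according to whether $\tilde{u}_j$ contains $u_r$ as a primary colour''. Where you genuinely diverge is the endgame. The paper keeps the sums $T_{m,i}=\sum_k c_{k,i}b_{m-k,i}$ and $T'_{m,i}$ intact, rewrites $T'_{m,i}$ using the product identity~\eqref{equalityqbin}, applies the Pascal rule~\eqref{pascal2} twice, and finishes with a shift $k\mapsto k+1$ that makes the leftover terms cancel. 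You instead extract the coefficient of each monomial $u_r^s d^{m-1}(\cdots)$ (and $u_r^s d^{m}(\cdots)$, which is analogous, and is cleanly separated since only the $d$-powers $m-1$ and $m$ occur), which reduces the whole lemma to the single identity
\begin{equation*}
q^s {\nu-1 \brack s}_q {\nu+m-s-1 \brack m-s-1}_q + {\nu-1 \brack s-1}_q {\nu+m-s \brack m-s}_q = {\nu+m-s-1 \brack m-1}_q {m \brack s}_q ,
\end{equation*}
the triangular powers $q^{s(s\pm 1)/2}$ from $c_{k,i}$ and $f_{m,k}$ having been divided out (this is where the lone $q^s$ comes from). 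This identity is indeed true: clearing the common $q$-factorials reduces it to $q^s(1-q^{\nu-s})(1-q^{m-s})+(1-q^s)(1-q^{\nu+m-s})=(1-q^{\nu})(1-q^{m})$, exactly the collapse you predicted, and via~\eqref{pascal1} it is equivalent to the combination of~\eqref{equalityqbin} and~\eqref{pascal2} that the paper uses. So your route buys a single, hand-checkable two-term identity in place of the paper's sum-level rearrangement and cancellation, at the cost of the index bookkeeping you already flag; the edge cases $s=0$, $s=m$, $\nu=r$ and the vanishing of the weight-$\geq r$ sums all behave as you expect, and the induction from $A_0=A'_0=1$ is legitimate since $1-q^n\neq 0$ for $n\geq 1$.
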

\begin{proof}
To prove the equality, we show that for every $1 \leq m \leq r,$ the coefficient of $(-1)^{m+1} A_{n-m}$ in~\eqref{recA} and the coefficient of $(-1)^{m+1} A'_{n-m}$ in~\eqref{recA'} are equal.
Let $m \in \lbrace 1,...,r \rbrace$ and
\begin{align*}
S_{m} &: = \left[ (-1)^{m+1} A_{n-m}\right] (\mathrm{rec}_{r}) 
\\&= d^{m-1} \sum_{\substack{  1 \leq j < 2^{r-1} \\  w(\tilde{u}_j)=m-1}} u_1^{\epsilon_1(j)} \cdots u_{r-1}^{\epsilon_{r-1}(j)} + d^{m} \sum_{\substack{  1 \leq j < 2^{r-1} \\  w(\tilde{u}_j)=m}} u_1^{\epsilon_1(j)} \cdots u_{r-1}^{\epsilon_{r-1}(j)} \\
&+ \sum_{i=1}^r \sum_{k=0}^{\min(i-1,m-1)} c_{k,i} b_{m-k,i} q^{i(n-m)}
\end{align*}
and
\begin{align*}
S'_{m} &: = \left[ (-1)^{m+1} A'_{n-m}\right] (\mathrm{rec'}_{r-1}) 
\\&= \sum_{\nu=0}^{r-1} \sum_{\mu=0}^{\min(m-1, \nu)} f_{m,\mu} e_{m,\nu - \mu} q^{\nu (n-m)} + u_r \sum_{\nu=1}^{r} \sum_{\mu=0}^{\min(m-1, \nu-1)} f_{m,\mu} e_{m,\nu - \mu -1} q^{\nu(n-m)}
\\&= f_{m,0} e_{m,0} + \sum_{\nu=1}^{r} \left( \sum_{\mu=0}^{\min(m-1, \nu)} f_{m,\mu} e_{m,\nu - \mu} + u_r \sum_{\mu=0}^{\min(m-1, \nu-1)} f_{m,\mu} e_{m,\nu - \mu -1} \right) q^{\nu (n-m)},
\end{align*}
because $e_{m, r-\mu}=0$ for all $\mu$.

We start by noting that $$f_{m,0} e_{m,0} = d^{m-1} \sum_{\substack{  1 \leq j < 2^{r-1} \\  w(\tilde{u}_j)=m-1}} u_1^{\epsilon_1(j)} \cdots u_{r-1}^{\epsilon_{r-1}(j)} + d^{m} \sum_{\substack{  1 \leq j < 2^{r-1} \\  w(\tilde{u}_j)=m}} u_1^{\epsilon_1(j)} \cdots u_{r-1}^{\epsilon_{r-1}(j)}.$$
Now define
$$T_{m,i}:= \sum_{k=0}^{\min(i-1,m-1)} c_{k,i} b_{m-k,i},$$
and
$$T'_{m,i}:= \sum_{k=0}^{\min(m-1, i)} f_{m,k} e_{m,i-k} + u_r \sum_{k=0}^{\min(m-1,i-1)} f_{m,k} e_{m,i-k-1}.$$
So it only remains to show that for all $1 \leq i \leq r$, $$T_{m,i}= T'_{m,i}.$$
We have
\begin{equation}
\label{eqcb}
\begin{aligned}
&c_{k,i} b_{m-k,i} 
\\&= u_r^k q^{\frac{k(k+1)}{2}} {i-1 \brack k}_{q}{i+m-k-1 \brack m-k-1}_{q} \\
&\times \left( d^{m-1}  \sum_{\substack{  1 \leq j < 2^{r} \\  w(\tilde{u}_j)=i+m-k-1}} u_1^{\epsilon_1(j)} \cdots u_{r}^{\epsilon_{r}(j)} +d^m \sum_{\substack{  1 \leq j < 2^{r} \\  w(\tilde{u}_j)=i+m-k}} u_1^{\epsilon_1(j)} \cdots u_{r}^{\epsilon_{r}(j)}\right) 
\\&= u_r^k q^{\frac{k(k+1)}{2}} {i-1 \brack k}_{q}  {i+m-k-1 \brack m-k-1}_{q}\\
& \times \left( d^{m-1} \sum_{\substack{  1 \leq j < 2^{r-1} \\  w(\tilde{u}_j)=i+m-k-1}} u_1^{\epsilon_1(j)} \cdots u_{r-1}^{\epsilon_{r-1}(j)} +d^m \sum_{\substack{  1 \leq j < 2^{r-1} \\  w(\tilde{u}_j)=i+m-k}} u_1^{\epsilon_1(j)} \cdots u_{r-1}^{\epsilon_{r-1}(j)} \right)
\\&+u_r^{k+1}q^{\frac{k(k+1)}{2}} {i-1 \brack k}_{q}  {i+m-k-1 \brack m-k-1}_{q} 
\\&\times \left( d^{m-1} \sum_{\substack{  1 \leq j < 2^{r-1} \\  w(\tilde{u}_j)=i+m-k-2}} u_1^{\epsilon_1(j)} \cdots u_{r-1}^{\epsilon_{r-1}(j)} +d^m \sum_{\substack{  1 \leq j < 2^{r-1} \\  w(\tilde{u}_j)=i+m-k-1}} u_1^{\epsilon_1(j)} \cdots u_{r-1}^{\epsilon_{r-1}(j)} \right)  ,
\end{aligned}
\end{equation}
where the last equality follows from splitting the sum according to whether $\tilde{u}_j$ contains $u_r$ as a primary colour or not.

On the other hand one has
\begin{equation}
\label{eqfe}
\begin{aligned}
&f_{m,k} e_{m,i-k} = u_r^kq^{\frac{k(k+1)}{2}} {m-1 \brack k}_{q} {i+m-k-1 \brack m-1}_{q} \\
&\times \left( d^{m-1} \sum_{\substack{  1 \leq j < 2^{r-1} \\  w(\tilde{u}_j)=i+m-k-1}} u_1^{\epsilon_1(j)} \cdots u_{r-1}^{\epsilon_{r-1}(j)}+d^m \sum_{\substack{  1 \leq j < 2^{r-1} \\  w(\tilde{u}_j)=i+m-k}} u_1^{\epsilon_1(j)} \cdots u_{r-1}^{\epsilon_{r-1}(j)} \right) ,
\end{aligned}
\end{equation}
and
\begin{equation}
\label{eqfe'}
\begin{aligned}
&u_r f_{m,k} e_{m,i-k-1} = u_r^{k+1}q^{\frac{k(k+1)}{2}} {m-1 \brack k}_{q} {i+m-k-2 \brack m-1}_{q}
\\&\times \left( d^{m-1} \sum_{\substack{  1 \leq j < 2^{r-1} \\  w(\tilde{u}_j)=i+m-k-2}} u_1^{\epsilon_1(j)} \cdots u_{r-1}^{\epsilon_{r-1}(j)}+d^m \sum_{\substack{  1 \leq j < 2^{r-1} \\  w(\tilde{u}_j)=i+m-k-1}} u_1^{\epsilon_1(j)} \cdots u_{r-1}^{\epsilon_{r-1}(j)} \right) .
\end{aligned}
\end{equation}

For all $j,k,m \in \N$, we have the following equality:
\begin{equation}
\label{equalityqbin}
{m-1 \brack k}_{q^N} {i+m-k-1 \brack m-1}_{q^N} = {i \brack k}_{q^N} {i+m-k-1 \brack m-k-1}_{q^N}.
\end{equation}
Using~\eqref{equalityqbin}, we obtain
\begin{align*}
T'_{m,i}&= \chi( i \leq m-1) \ u_r^iq^{\frac{i(i+1)}{2}}{m-1 \brack m-i-1}_{q} \\
&\times \left( d^{m-1} \sum_{\substack{  1 \leq j < 2^{r-1} \\  w(\tilde{u}_j)=m-1}} u_1^{\epsilon_1(j)} \cdots u_{r-1}^{\epsilon_{r-1}(j)} +d^m \sum_{\substack{  1 \leq j < 2^{r-1} \\  w(\tilde{u}_j)=m}} u_1^{\epsilon_1(j)} \cdots u_{r-1}^{\epsilon_{r-1}(j)}\right) 
\\&+ \sum_{k=0}^{\min(m-1,i-1)} u_r^k q^{\frac{k(k+1)}{2}} {i \brack k}_{q} {i+m-k-1 \brack m-k-1}_{q}
\\&\times \left( d^{m-1} \sum_{\substack{  1 \leq j < 2^{r-1} \\  w(\tilde{u}_j)=i+m-k-1}} u_1^{\epsilon_1(j)} \cdots u_{r-1}^{\epsilon_{r-1}(j)} +d^m \sum_{\substack{  1 \leq j < 2^{r-1} \\  w(\tilde{u}_j)=i+m-k}} u_1^{\epsilon_1(j)} \cdots u_{r-1}^{\epsilon_{r-1}(j)}\right)
\\&+ \sum_{k=0}^{\min(m-1,i-1)} u_r^{k+1} q^{\frac{k(k+1)}{2}} {i-1 \brack k}_{q} {i+m-k-2 \brack m-k-1}_{q}
\\&\times \left( d^{m-1} \sum_{\substack{  1 \leq j < 2^{r-1} \\  w(\tilde{u}_j)=i+m-k-2}} u_1^{\epsilon_1(j)} \cdots u_{r-1}^{\epsilon_{r-1}(j)} +d^m \sum_{\substack{  1 \leq j < 2^{r-1} \\  w(\tilde{u}_j)=i+m-k-1}} u_1^{\epsilon_1(j)} \cdots u_{r-1}^{\epsilon_{r-1}(j)}\right).
\end{align*}
By the second $q$-anlogue~\eqref{pascal2} of Pascal's triangle, we have
$${i \brack k}_{q} = {i-1 \brack k}_{q} + q^{i-k} {i-1 \brack k-1}_{q},$$
$${i+m-k-2 \brack m-k-1}_{q} = {i+m-k-1 \brack m-k-1}_{q} - q^{i} {i+m-k-2 \brack m-k-2}_{q}.$$
Thus we can rewrite $T'_{m,i}$ as
\begin{align*}
T'_{m,i}&= \chi( i \leq m-1) \ u_r^iq^{\frac{i(i+1)}{2}}{m-1 \brack m-i-1}_{q} \\
&\times \left( d^{m-1} \sum_{\substack{  1 \leq j < 2^{r-1} \\  w(\tilde{u}_j)=m-1}} u_1^{\epsilon_1(j)} \cdots u_{r-1}^{\epsilon_{r-1}(j)} +d^m \sum_{\substack{  1 \leq j < 2^{r-1} \\  w(\tilde{u}_j)=m}} u_1^{\epsilon_1(j)} \cdots u_{r-1}^{\epsilon_{r-1}(j)}\right)
\\&+ \sum_{k=0}^{\min(m-1,i-1)} u_r^k q^{\frac{k(k+1)}{2}} {i-1 \brack k}_{q} {i+m-k-1 \brack m-k-1}_{q}
\\&\times \left( d^{m-1} \sum_{\substack{  1 \leq j < 2^{r-1} \\  w(\tilde{u}_j)=i+m-k-1}} u_1^{\epsilon_1(j)} \cdots u_{r-1}^{\epsilon_{r-1}(j)} +d^m \sum_{\substack{  1 \leq j < 2^{r-1} \\  w(\tilde{u}_j)=i+m-k}} u_1^{\epsilon_1(j)} \cdots u_{r-1}^{\epsilon_{r-1}(j)}\right)
\\&+ \sum_{k=0}^{\min(m-1,i-1)} u_r^k q^{\frac{k(k-1)}{2}+j} {i-1 \brack k-1}_{q} {i+m-k-1 \brack m-k-1}_{q}
\\&\times \left( d^{m-1} \sum_{\substack{  1 \leq j < 2^{r-1} \\  w(\tilde{u}_j)=i+m-k-1}} u_1^{\epsilon_1(j)} \cdots u_{r-1}^{\epsilon_{r-1}(j)} +d^m \sum_{\substack{  1 \leq j < 2^{r-1} \\  w(\tilde{u}_j)=i+m-k}} u_1^{\epsilon_1(j)} \cdots u_{r-1}^{\epsilon_{r-1}(j)}\right)
\\&+ \sum_{k=0}^{\min(m-1,i-1)} u_r^{k+1} q^{\frac{k(k+1)}{2}} {i-1 \brack k}_{q} {i+m-k-1 \brack m-k-1}_{q}
\\&\times \left( d^{m-1} \sum_{\substack{  1 \leq j < 2^{r-1} \\  w(\tilde{u}_j)=i+m-k-2}} u_1^{\epsilon_1(j)} \cdots u_{r-1}^{\epsilon_{r-1}(j)} +d^m \sum_{\substack{  1 \leq j < 2^{r-1} \\  w(\tilde{u}_j)=i+m-k-1}} u_1^{\epsilon_1(j)} \cdots u_{r-1}^{\epsilon_{r-1}(j)}\right)
\\&- \sum_{k=0}^{\min(m-2,i-1)} u_r^{k+1} q^{\frac{k(k+1)}{2}+j} {i-1 \brack k}_{q} {i+m-k-2 \brack m-k-2}_{q}
\\&\times \left( d^{m-1} \sum_{\substack{  1 \leq j < 2^{r-1} \\  w(\tilde{u}_j)=i+m-k-2}} u_1^{\epsilon_1(j)} \cdots u_{r-1}^{\epsilon_{r-1}(j)} +d^m \sum_{\substack{  1 \leq j < 2^{r-1} \\  w(\tilde{u}_j)=i+m-k-1}} u_1^{\epsilon_1(j)} \cdots u_{r-1}^{\epsilon_{r-1}(j)}\right).
\end{align*}
By~\eqref{eqcb}, the sum of the second and fourth terms above is equal to $T_{m,j}$.
Let $X$ denote the sum of the first, third and fifth terms. It remains to show that $X=0$.
By the change of variable $k'=k+1$ in the last sum, we obtain
\begin{align*}
X &= \chi( i \leq m-1) \ u_r^iq^{\frac{i(i+1)}{2}}{m-1 \brack m-i-1}_{q} \\
&\times \left( d^{m-1} \sum_{\substack{  1 \leq j < 2^{r-1} \\  w(\tilde{u}_j)=m-1}} u_1^{\epsilon_1(j)} \cdots u_{r-1}^{\epsilon_{r-1}(j)} +d^m \sum_{\substack{  1 \leq j < 2^{r-1} \\  w(\tilde{u}_j)=m}} u_1^{\epsilon_1(j)} \cdots u_{r-1}^{\epsilon_{r-1}(j)}\right)\\
&+\sum_{k=0}^{\min(m-1,i-1)} u_r^k q^{\frac{k(k-1)}{2}+j} {i-1 \brack k-1}_{q} {i+m-k-1 \brack m-k-1}_{q}
\\&\times \left( d^{m-1} \sum_{\substack{  1 \leq j < 2^{r-1} \\  w(\tilde{u}_j)=i+m-k-1}} u_1^{\epsilon_1(j)} \cdots u_{r-1}^{\epsilon_{r-1}(j)} +d^m \sum_{\substack{  1 \leq j < 2^{r-1} \\  w(\tilde{u}_j)=i+m-k}} u_1^{\epsilon_1(j)} \cdots u_{r-1}^{\epsilon_{r-1}(j)}\right)\\
&-\sum_{k=1}^{\min(m-1,i)} u_r^k q^{\frac{k(k-1)}{2}+j} {i-1 \brack k-1}_{q} {i+m-k-1 \brack m-k-1}_{q}
\\&\times \left( d^{m-1} \sum_{\substack{  1 \leq j < 2^{r-1} \\  w(\tilde{u}_j)=i+m-k-1}} u_1^{\epsilon_1(j)} \cdots u_{r-1}^{\epsilon_{r-1}(j)} +d^m \sum_{\substack{  1 \leq j < 2^{r-1} \\  w(\tilde{u}_j)=i+m-k}} u_1^{\epsilon_1(j)} \cdots u_{r-1}^{\epsilon_{r-1}(j)}\right)
\\&= 0,
\end{align*}
because the two sums are equal when $i > m-1$, and when $i \leq m-1$, the remaining term cancels with the first term.
This completes the proof.
\end{proof}

We can finally use all the previous lemmas to prove Theorem~\ref{main}.

\begin{proof}[Proof of Theorem~\ref{main}]
Let us start by the initial case $r=1$. Let $f$ such that $f(0)=1$ and
\begin{equation}
\tag{$\mathrm{eq}_{1}$}
\left(1-dxu_1\right)f(x) = f(xq) +xu_1 f(xq).
\end{equation}

Then
\begin{equation}
\label{r1}
f(x) = \frac{1+xu_1}{1-dxu_1} f\left(xq\right).
\end{equation}
Iterating~\eqref{r1}, we get
$$f(x)= \prod_{n=0}^{\infty} \frac{1+xu_1q^n}{1-dxu_1q^n} f(0).$$
Thus
$$f(1)=\frac{(-u_1;q)_{\infty}}{(du_1;q)_{\infty}}.$$

Now assume that Theorem~\ref{main} is true for some positive integer $r-1$ and show that it is true for $r$ too.
Let $f$ such that $f(0)=1$ satisfying~\eqref{qdiff}.
Let
$$F(x):= f(x) \prod_{n=0}^{\infty} \frac{1-dxu_rq^n}{1-xq^n}.$$
By Lemma~\ref{lemmaF}, $F(0)=1$ and $F$ satisfies~\eqref{qdiffF}.
Now let $$F(x) =: \sum_{n=0}^{\infty} A_n x^n.$$
Then by Lemma~\ref{lemmaA} $A_0=1$ and $(A_n)_{n \in \N}$ satisfies~\eqref{recA}.
But by Lemma~\ref{equalAA'}, $(A_n)_{n \in \N}$ also satisfies~\eqref{recA'}.
Now let  $$ A_n =: a_n \prod_{k=0}^{n-1} \left( 1 +u_rq^{k} \right).$$
By Lemma~\ref{lemmaA'}, $a_0 =1$ and $(a_n)_{n \in \N}$ satisfies~\eqref{reca}.
Let $$G(x) := \sum_{n=0}^{\infty} a_n x^n.$$
By Lemma~\ref{lemmaa}, $G(0)=1$ and $G$ satisfies~\eqref{qdiffG}.
Finally let $$g(x):= G(x) \prod_{n=0}^{\infty} \left(1-xq^{n}\right).$$
By Lemma~\ref{lemmaG}, $g(0)=1$ and $g$ satisfies $(\mathrm{eq}_{r-1})$.
By the induction hypothesis, we have
\begin{equation}
\label{g1}
g(1)= \prod_{k=1}^{r-1} \frac{(-u_k;q)_{\infty}}{(du_k;q)_{\infty}}.
\end{equation}
By Appell's comparison theorem~\cite{Appell},
\begin{align*}
 \lim_{n \rightarrow \infty} a_n &= \lim_{x \rightarrow 1^-} (1-x) \sum_{n=0}^{\infty} a_n x^n
\\&= \lim_{x \rightarrow 1^-} (1-x) G(x) 
\\&= \lim_{x \rightarrow 1^-} (1-x) \frac{g(x)}{\prod_{n=0}^{\infty} \left(1-xq^{n}\right)}
\\&= \frac{g(1)}{\prod_{n=1}^{\infty}\left(1-q^{n}\right)}.
\end{align*}
Thus
$$\lim_{n \rightarrow \infty} A_n = \prod_{k=0}^{\infty} \left( 1 +u_rq^{k} \right) \frac{g(1)}{\prod_{n=1}^{\infty}\left(1-q^{nN}\right)}.$$
Therefore, by Appell's lemma again,
\begin{equation}
\label{limitF}
\begin{aligned}
\lim_{x \rightarrow 1^-} (1-x) F(x) &= \lim_{n \rightarrow \infty} A_n
\\&= \prod_{k=0}^{\infty} \left( 1 +u_rq^{k} \right) \frac{g(1)}{\prod_{n=1}^{\infty}\left(1-q^{n}\right)}.
\end{aligned}
\end{equation}
Finally,
\begin{align*}
f(1) &= \lim_{x \rightarrow 1^-} f(x)
\\&= \lim_{x \rightarrow 1^-} \prod_{n=0}^{\infty} \frac{1-xq^{n}}{1-dxu_rq^{n}} F(x)
\\&= \frac{\prod_{n=1}^{\infty} \left(1-q^{n}\right)}{ \prod_{n=0}^{\infty} \left(1-du_rq^{n}\right)} \prod_{k=0}^{\infty} \left( 1 +u_rq^{k} \right) \frac{g(1)}{\prod_{n=1}^{\infty}\left(1-q^{n}\right)} \ \text{by~\eqref{limitF}}
\\&= \frac{\left(-u_r;q\right)_{\infty}}{\left(du_r;q\right)_{\infty}} g(1).
\end{align*}
Then by~\eqref{g1},
$$f(1)= \prod_{k=1}^r \frac{(-u_k;q)_{\infty}}{(du_k;q)_{\infty}}.$$
This completes the proof.
\end{proof}

Now Theorem~\ref{refinement} is a simple corollary of Theorem~\ref{main}.

\begin{proof}[Proof of Theorem~\ref{refinement}]
By Lemma~\ref{conj}, $f_{1_{u_1}}$ satisfies~\eqref{qdiff}. Therefore
$$ f_{0_{u_1}}(1) = \prod_{k=1}^r \frac{(-u_k;q)_{\infty}}{(du_k;q)_{\infty}}.$$
This infinite product is the generating function for the overpartitions counted by $D(\ell_1, \dots, \ell_r;k,n)$, and $f_{0_{u_1}}(1)$ the generating function for overpartitions counted by  $E(\ell_1, \dots, \ell_r;k,n)$, thus
$$D(\ell_1, \dots, \ell_r;k,n) = E(\ell_1, \dots, \ell_r;k,n).$$
\end{proof}

\section*{Acknowledgements}
The author thanks Jeremy Lovejoy for his comments on an earlier version of this paper.

\bibliographystyle{alpha}     

\bibliography{references}   % name your BibTeX data base

\end{document}